\documentclass[12pt,a4paper,oneside,leqno]{amsart}

\usepackage[all]{xy}
\usepackage{hyperref,amssymb,amsmath,bbm,url,geometry,cancel,cleveref,enumitem}
\usepackage[mathscr]{euscript}
\usepackage{mathrsfs}
\usepackage[utf8]{inputenc}

\newtheorem{thmi}{Theorem}
\newtheorem{propi}[thmi]{Proposition}
\newtheorem{cjti}[thmi]{Conjecture}

\newtheorem{cori}[thmi]{Corollary}

\newtheorem{thm}{Theorem}[subsection]
\newtheorem{prop}[thm]{Proposition}
\newtheorem{cor}[thm]{Corollary}
\newtheorem{lm}[thm]{Lemma}

\theoremstyle{definition}
\newtheorem{df}[thm]{Definition}
\newtheorem{ex}[thm]{Example}
\newtheorem{num}[thm]{}

\newtheorem{cjt}[thm]{Conjecture}

\theoremstyle{remark}
\newtheorem{rem}[thm]{Remark}

\numberwithin{equation}{thm}

\chardef\tempcat=\the\catcode`\@
\catcode`\@=11

\def\cydot{{\mathsurround=0pt$\cdot$}}

\def\ubar#1{\oalign{#1\crcr\hidewidth
    \vbox to.2ex{\hbox{\char22}\vss}\hidewidth}}

\def\cprime{\/{\mathsurround=0pt$'$}}
\def\Cprime{{\mathsurround=0pt$'$}}
\def\cdprime{\/{\mathsurround=0pt$''$}}
\def\Cdprime{{\mathsurround=0pt$\ubar{\hbox{$''$}}$}}

\def\dbar{dj}           \def\Dbar{Dj}           

\def\dz{dz}
\def\Dz{Dz}
\def\dzh{dzh\cydot }
\def\Dzh{Dzh\cydot }

\def\@gobble#1{}
\def\@testgrave{\`}
\def\@stressit{\futurelet\chartest\@stresschar }

\def\@stresschar#1{\ifx #1y\def\result{\futurelet\chartest\@yligature}\else \ifx #1Y\def\result{\futurelet\chartest\@Yligature}\else \ifx\chartest\@testgrave \def\result{\accent"26 }\else \def\result{\accent"26 #1}\fi \fi \fi
  \result }

\def\@yligature{\ifx a\chartest \def\result{\accent"26 \char"1F \@gobble}\else \ifx u\chartest \def\result{\accent"26 \char"18 \@gobble}\else \def\result{\accent"26 y}\fi \fi
  \result }

\def\@Yligature{\ifx a\chartest \def\result{\accent"26 \char"17 \@gobble}\else \ifx A\chartest \def\result{\accent"26 \char"17 \@gobble}\else \ifx u\chartest \def\result{\accent"26 \char"10 \@gobble}\else \ifx U\chartest \def\result{\accent"26 \char"10 \@gobble}\else \def\result{\accent"26 Y}\fi \fi \fi \fi
  \result }

\def\!{\ifmmode \mskip-\thinmuskip \fi}

\def\cyracc{\def\cydot{{\kern0pt}}\def\cprime{\char"7E }\def\Cprime{\char"5E }\def\cdprime{\char"7F }\def\Cdprime{\char"5F }\def\dbar{dj}\def\Dbar{Dj}\def\dz{\char"1E }\def\Dz{\char"16 }\def\dzh{\char"0A }\def\Dzh{\char"02 }\def\'##1{\if c##1\char"0F \else \if C##1\char"07 \else \accent"26 ##1\fi \fi }\def\`##1{\if e##1\char"0B \else \if E##1\char"03 \else \errmessage{accent \string\` not defined in cyrillic}##1\fi \fi }\def\=##1{\if e##1\char"0D \else \if E##1\char"05 \else \if \i##1\char"0C \else \if I##1\char"04 \else \errmessage{accent \string\= not defined in cyrillic}##1\fi \fi \fi \fi }\def\u##1{\if \i##1\accent"24 i\else \accent"24 ##1\fi }\def\"##1{\if \i##1\accent"20 \char"3D \else \if I##1\accent"20 \char"04 \else \accent"20 ##1\fi \fi }\def\!{\ifmmode \def\result{\mskip-\thinmuskip}\else \def\result{\@stressit}\fi \result}}

\def\keep@cyracc{\let\cyr=\relax \let\i=\relax
        \let\ubar=\relax \let\cydot=\relax
        \let\cprime=\relax \let\Cprime=\relax
        \let\cdprime=\relax \let\Cdprime=\relax
        \let\dbar=\relax \let\Dbar=\relax
        \let\dz=\relax \let\Dz=\relax
        \let\dzh=\relax \let\Dzh=\relax
        \let\'=\relax \let\`=\relax \let\==\relax
        \let\u=\relax \let\"=\relax \let\!=\relax }

\catcode`\@=\tempcat
 \DeclareFontFamily{U}{russian}{}
\DeclareFontShape{U}{russian}{m}{n}
        { <5><6> wncyr5
        <7><8><9> wncyr7
        <10><10.95><12><14.4><17.28><20.74><24.88> wncyr10 }{}
\DeclareSymbolFont{Russian}{U}{russian}{m}{n}
\DeclareSymbolFontAlphabet{\mathcyr}{Russian}
\makeatletter
\let\@math@cyr\mathcyr
\renewcommand{\mathcyr}[1]{\@math@cyr{\cyracc #1}}
\makeatother

\newcommand{\ab}{\mathrm Ab}
\newcommand{\Sm}{\mathrm{Sm}}
\newcommand{\Sme}{\overline{\mathrm{Sm}}}
\newcommand{\Smc}{\mathrm{Sm}^{\mathrm{cor}}}
\newcommand{\Smec}{\overline{\mathrm{Sm}}^{\mathrm{cor}}}
\newcommand{\Smgen}{\mathrm{Sm}^{\mathrm{cor},(0)}}

\newcommand{\efld}{\tilde{\mathcal E}}

\DeclareMathOperator{\PSh}{PSh}

\DeclareMathOperator{\Shtr}{Sh^{tr}}

\DeclareMathOperator{\MCycl}{\mathscr MCycl}

\newcommand{\iC}{\mathscr C}
\newcommand{\iD}{\mathscr D}
\newcommand{\iI}{\mathscr I}
\newcommand{\iS}{\mathscr S} \newcommand{\iSp}{\mathscr Sp} \newcommand{\iM}{\mathscr M}
\newcommand{\catC}{\mathrm C}
\newcommand{\catD}{\mathrm D}
\newcommand{\catA}{\mathrm A}
\newcommand{\Fin}{\mathrm{Fin}_*}

\DeclareMathOperator{\Ho}{Ho}

\DeclareMathOperator{\iDer}{\mathscr D}
\DeclareMathOperator{\iCat}{\mathscr Cat_\infty}
\DeclareMathOperator{\iCatP}{\mathscr Cat^{\mathrm{pres}}_\infty}
\DeclareMathOperator{\iCatS}{\mathscr Cat^{\mathrm{st}}_\infty}
\DeclareMathOperator{\iCatM}{\mathscr Cat^{\otimes}_\infty}
\DeclareMathOperator{\iCatMS}{\mathscr Cat^{\mathrm{st\otimes}}_\infty}

\DeclareMathOperator{\iDM}{\mathscr DM}
\DeclareMathOperator{\iDMe}{\mathscr DM^{eff}}
\DeclareMathOperator{\iDMgm}{\mathscr DM_{gm}}
\DeclareMathOperator{\iDMgme}{\mathscr DM^{eff}_{gm}}
\DeclareMathOperator{\iDMgen}{\mathscr DM^{(0)}}

\DeclareMathOperator{\iSH}{\mathscr{SH}}

\DeclareMathOperator{\iPSh}{\mathscr PSh}
\DeclareMathOperator{\iSh}{\mathscr Sh}

\newcommand{\sS}{\Delta^\op\mathrm{Set}}
\DeclareMathOperator{\DM}{DM}
\DeclareMathOperator{\DMe}{DM^{eff}}
\DeclareMathOperator{\DMgm}{DM_{gm}}
\DeclareMathOperator{\DMgme}{DM^{eff}_{gm}}

\DeclareMathOperator{\Chow}{CHM}

\newcommand{\KM}[1]{\operatorname K^M_{#1}}
\newcommand{\KSM}[1]{\operatorname K^{SM}_{#1}}

\DeclareMathOperator{\uH}{\underline H}
\newcommand{\hrt}{\heartsuit}
\DeclareMathOperator{\sus}{C_*^{sus}}

\DeclareMathOperator{\Ztr}{\ZZ^{tr}}
\DeclareMathOperator{\HI}{HI}
\DeclareMathOperator{\HM}{HM}

\newcommand{\uA}{\underline{A}} 

\newcommand{\E} {\mathbf E}
\newcommand{\F} {\mathbf F}

\DeclareMathOperator{\B} {B}
\DeclareMathOperator{\BGL} {BGL}
\DeclareMathOperator{\BSL} {BSL}
\DeclareMathOperator{\GL} {GL}
\DeclareMathOperator{\SL} {SL}
\DeclareMathOperator{\SO} {SO}
\DeclareMathOperator{\SU} {SU}
\DeclareMathOperator{\Gr} {Gr}

\newcommand{\HB}{\operatorname H_{\mathcyr B}}

\newcommand{\Hmot}{\operatorname H_{\mathrm M}}

\newcommand{\NN} {\mathbb N}
\newcommand{\ZZ} {\mathbb Z}
\newcommand{\QQ} {\mathbb Q}
\newcommand{\FF} {\mathbb F}
\newcommand{\RR} {\mathbb R}
\newcommand{\CC} {\mathbb C}
\renewcommand{\AA} {\mathbb A}
\newcommand{\GG} {\mathbb G_m }
\newcommand{\GGx}[1] {\mathbb G_{m,#1} }
\newcommand{\PP} {\mathbb P}

 \newcommand{\cM}{\mathcal M}

\newcommand{\cO}{\mathcal O}

\newcommand{\cX}{\mathcal X} \newcommand{\cY}{\mathcal Y}
\newcommand{\cZ}{\mathcal Z} \newcommand{\cU}{\mathcal U}

\newcommand{\cI}{\mathcal I} 

\newcommand{\sI}{\mathscr I} \newcommand{\sJ}{\mathscr J}

\newcommand{\T} {\mathscr T}

\newcommand{\un}{\mathbbm 1}

\renewcommand{\lim}{\operatorname{lim}}
\DeclareMathOperator{\colim}{colim}
\newcommand{\plim}[1]{\underset{#1}{\text{\rm ``lim''}}}

\newcommand{\pprod}[1]{\underset{#1}{\text{\rm ``$\prod$''}}}
\DeclareMathOperator{\hocolim} {hocolim}

\DeclareMathOperator{\Ker}{Ker}
\DeclareMathOperator{\coKer}{coKer}
\DeclareMathOperator{\Img}{Im}

\DeclareMathOperator{\codim}{codim}

\DeclareMathOperator{\dtr}{trdeg} 

\newcommand{\tw}[1]{\{#1\}}  

\DeclareMathOperator{\pro}{pro-\!}
\DeclareMathOperator{\ind}{ind-\!}

\DeclareMathOperator{\M}{M} \DeclareMathOperator{\bM}{\hat M} \DeclareMathOperator{\h}{h} \DeclareMathOperator{\bh}{\hat h} 

 \newcommand{\tra}[1]{{}^{\mathrm t}#1} 

\DeclareMathOperator{\Comp}{C}

\DeclareMathOperator{\Der}{\mathrm D}

\newcommand{\nis}{\mathrm{Nis}}

\newcommand{\cdh}{\mathrm{cdh}}

\newcommand{\op}{\mathrm{op}}
\newcommand{\Id}{\mathrm{Id}}
\newcommand{\spec}[1] {\operatorname{\mathrm{Spec}}(#1)}

\DeclareMathOperator{\Spec}{Spec}

\DeclareMathOperator{\Nrv}{\mathscr N}

\DeclareMathOperator{\Hom}{Hom}
\DeclareMathOperator{\Map}{Map}
\DeclareMathOperator{\Fun}{\mathscr Fun}
\DeclareMathOperator{\uHom}{\underline{Hom}}

\DeclareMathOperator{\Ext}{Ext}

\newcommand{\Flag}{\mathscr Flag}

\DeclareMathOperator{\Pic}{Pic}
\DeclareMathOperator{\CH}{CH}

\DeclareMathOperator{\car}{char}
\DeclareMathOperator{\card}{card}

\DeclareMathOperator{\Prim}{P} \newcommand{\sslash}{\mathbin{/\mkern-6mu/}}

\begin{document}

\title{Generic motives and motivic cohomology of fields}

\author{F.~D\'eglise}
\date{May 2025}

\begin{abstract}
This paper investigates the structure of generic motives
 and their implications for the motivic cohomology of fields.
 Originating in Voevodsky's theory of motives
 and related to Beilinson's vision of a motivic $t$-structure,
 generic motives serve as pro-objects encoding essential information about cycles and cohomology.
 We present new computations of generic motives, focusing on curves and surfaces.
 These computations suggest a conjectural framework for morphisms of generic motives
 and highlight the central role of transcendental motives.

We then focus on the motivic cohomology of fields,
 building on Borel's rank computation of K-theory and its relation to higher regulators.
 We provide a direct argument for determining the weights in the $\lambda$-structure
 of the K-theory of number fields, bypassing the need for regulator maps.
 We show that motivic cohomology groups are often of infinite rank,
 typically matching the cardinality of the base field.
 For instance, we prove that motivic cohomology groups
 of $\RR$ and $\CC$ are uncountable in many bi-degrees. Despite this,
 we propose a conjecture that complements the Beilinson-Soulé vanishing conjecture, 
 suggesting that the growth of motivic cohomology
 is more controlled than these results may initially indicate.
\end{abstract}

\maketitle

{\flushright This paper is dedicated to the memory of Jacob Murre.}

\setcounter{tocdepth}{2}
\tableofcontents

\section{Introduction}

\subsection{Historical perspective}

\subsubsection*{Beilinson's conjectures}
The concept of motivic cohomology was first formulated, as it seems, by Beilinson in a 1982 letter to Soulé\footnote{This letter was once available on the K-theory preprint server,
 which unfortunately went offline. The first published reference by Beilinson mentioning motivic cohomology (under the name ``absolute'' cohomology) is \cite{BeiHR1}.}
 as a universal cohomology theory for algebraic varieties,
 analogous to singular cohomology in topology.
 Inspired initially by Bloch’s construction of higher regulators for $K_2$,
 Beilinson envisioned motivic cohomology as a universal receptacle for characteristic classes,
 serving as a refinement of Quillen’s higher algebraic K-theory. 

This vision was preceded by a series of foundational computations linking $K$-theory and special values of zeta functions.
 Early results by Moore, Tate, and Garland on the group $K_2$ of global fields (see Bass's Bourbaki talk \cite{BassB})
 led to the formulation of the famous Birch-Tate conjecture --- much progress has been made on this conjecture,
 but the general case remains open.
 This motivated Lichtenbaum to propose a series of synthetic conjectures, first in the context of the zeta function of a totally real number field,
 linking special values, $K$-theory, étale cohomology, and higher regulators (see \cite{LichtVal1}). A decisive breakthrough came with Borel’s results in 1972,
 which extended Garland’s earlier work and determined the ranks of higher algebraic $K$-groups of rings of integers in number fields; see \cite{BorelCRAS, Borel}.
 Borel also established the existence of higher regulators in this setting and verified Lichtenbaum's conjecture in the case of (arbitrary) number fields; 
 see \cite{Borel2, Borel2bis}.

These converging developments culminated in a profound conceptual synthesis through the work of Beilinson,
 who brought together several deep lines of thought into a unified vision.
 Bloch’s 1978 Irvine lectures (published later in \cite{BlochHR}),
 in which he revisited Borel's work on regulators of number fields through the lens of dilogarithms
 and proposed a visionary extension to the case of elliptic curves,  had a decisive influence.
 Around the same time, Deligne's study of special values of $L$-functions \cite{DelVal} brought Grothendieck's theory of motives into the picture.
 Building on these inputs, Beilinson formulated a striking and coherent picture:  combining the conjectural theory of mixed motives with Deligne's theory of mixed Hodge structures,
 he expressed the higher regulator as a byproduct of the realization functor from mixed motives to mixed Hodge structures.

Returning to the universal property of motivic cohomology,  Beilinson’s regulator also admits a concrete, unconditional formulation:
 rational motivic cohomology can be defined using the $\lambda$-structure on algebraic $K$-theory as developed by Soulé in \cite{Soule}.
 The Beilinson regulator then appears as the universal map to Deligne cohomology, induced by Chern classes.
 This allowed Beilinson to state conjectures predicting the special values of $L$-functions of motives in terms of this regulator,
 encompassing the earlier predictions of Lichtenbaum, and including, up to rational factors,
 both the Birch–Tate and Birch-Swinnerton-Dyer conjectures; see \cite{BeiHR1, BeiPair}.

Beilinson’s work proved extremely fruitful and inspired many mathematicians to approach his conjectures from a variety of angles.
 Bloch was the first to propose in \cite{BlochHR} an integral version of motivic cohomology through the theory of higher Chow groups,
 providing a concrete model compatible with Beilinson’s conjectures. Suslin slightly later introduced in \cite{SVsing}
 an alternative approach via Suslin homology, inspired by topological methods.

\subsubsection*{Voevodsky's motivic theory}
It was Voevodsky who brought a decisive and revolutionary shift.
 In a groundbreaking PhD thesis, he introduced a new homotopical viewpoint based on the contractibility of the affine line and the use of the h-topology. 
 Building on this approach, he developed a new framework for defining Beilinson's conjectural motivic complexes that not only met the formal expectations envisioned by Beilinson,
 but also integrated naturally into a broader theory of motivic homotopy, later developed in collaboration with Morel.
 This framework led to several landmark results, including the proofs of the Milnor and Bloch–Kato conjectures \cite{VoeBK}, as well as the Quillen–Lichtenbaum conjecture.
 Voevodsky’s approach laid the foundations for a complete theory of motivic complexes, modeled on Grothendieck's $\ell$-adic formalism, as anticipated by Beilinson.
 It was brought to maturity through the development of a full Grothendieck six-functor formalism, both in its Nisnevich and étale variants
 (see \cite{VoePHD, VSF, Ayoub, VoeSimpl, ROmod, Ivorra, CD3, AyoubEt, CD4, CD5, ParkMot, Spit}).\footnote{We review the developments of the Nisnevich variant in \Cref{rem:DM-bases}.}

A concise account of the construction of the category of motivic complexes $\DM(k)$ over a (perfect) field
 will be provided in \Cref{sec:mot-cpx}. For the purposes of this work, we have chosen to  present this construction
 within the framework of $\infty$-categories, which, for many applications in  homological algebra,
  are better suited than triangulated categories. To our knowledge, this is the first presentation of motivic complexes
 directly formulated in terms of $\infty$-categories. To assist readers unfamiliar with these concepts,
 we have included a detailed overview on $\infty$-categories in the appendix \ref{sec:infty-cat}.\footnote{The $\infty$-category
 of motivic complexes is denoted by $\iDM(k)$ while its homotopy category,
 equivalent to the non-effective version of Voevodsky's construction, is denoted by $\DM(k)$.}

\subsection{Content of the paper}

\subsubsection*{Motivic cohomology of fields}
Despite these advances, one major problem remains open: the motivic $t$-structure whose heart
 would provide the abelian category of mixed motives. With rational coefficients and other a base field,
 Beilinson's conjectural description leaves no choice, as it demands that the $\ell$-realization
 is conservative and $t$-exact, with respect to the canonical $t$-structure on the derived
 category of $\ell$-adic \'etale sheaves (Galois modules in that case).
 Therefore a rational geometric motivic complex
 should be non-negative (resp. positive) for the motivic $t$-structure
 if and only if its $\ell$-adic realization is.
 However, the fact that this actually  defines a $t$-structure\footnote{Orthogonality and existence of truncations are problematic} depends on at least two deep conjectures: the conservativity conjecture (see \cite{AyoubCons})
 and the Beilinson-Soulé vanishing conjecture. In fact, Levine showed that the latter is equivalent to the fact that the above definition induces a $t$-structure on the sub-category of rational geometric Artin-Tate
 motives; see \cite{LevineAT}.

The Beilinson-Soulé conjecture asserts that for any smooth scheme $X$ over a field $k$,
 rational motivic cohomology 
$$
\HB^{n,i}(X):=\Gr_\gamma^i K_{2i-n}(X)_\QQ
$$
 vanishes if $n \leq 0$ and $(n,i) \neq (0,0)$ (see \cite[Introduction]{BMS}).
 Using the coniveau spectral sequence, this conjecture can be reduced to the case of arbitrary
 function fields.\footnote{More precisely, the residue fields of all (schematic) points of $X$.}
 In particular,
 the key issue becomes our understanding of the motivic cohomology of fields.

For now, rational motivic cohomology of fields is known for finite fields
 and global fields, in other words, fields $K$ of Kronecker dimension less or equal
to one, where one  defines this degree as:
$$
\delta(K)=
\begin{cases}
1+\dtr(K/\QQ) & \text{if } \car(F)=0, \\
\dtr(K/\FF_p) & \text{if } \car(F)=p.
\end{cases}
$$
 Indeed, in positive characteristic $p$, if $\delta(K) \leq 1$,
 results of Quillen (finite fields) and Harder  imply that:
$$
\HB^{n,i}(K)=\begin{cases}
\QQ & n=i=0, \\
K^\times \otimes \QQ & n=i=1, \\
0 & \text{otherwise.}
\end{cases}
$$

\subsubsection*{The case of number fields.}
This is more subtle than the positive characteristic case.
 As mentioned in the preceding historical review,
 Borel computed the ranks of
 the rational K-groups of $K$, which, in degrees $>1$,
 coincide with those of its ring of integers $\cO_K$.
 These groups are concentrated in odd degrees $2n-1$.
 Borel also defined a regulator map:\footnote{See \eqref{eq:Borel-regulator} below for an explicit definition.}
$$
\rho_{Bo}:K_{2n-1}(\cO_K)_\QQ \rightarrow V_n
$$
with values in a  finite-dimensional real vector space $V_n$,
 and showed it is injective.
 On the other hand, Beilinson defined a regulator map,
 as mentioned above,
 with values in the Deligne-Beilinson cohomology of $K$
$$
\rho_{Be}:\HB^{n,i}(K) \rightarrow H_{\mathcal D}^n(\spec K,\RR(i))
$$
and claimed that this map agrees with Borel's regulator,
 once one takes into account the Chern character isomorphism:
$$
K_i(X)_\QQ \simeq \oplus_{n\geq 0} \HB^{2n-i,n}(X).
$$
As a consequence of the injectivity of Borel's regulator,
 one deduces that:
\begin{equation}\tag{\thesubsection.vanish}\label{eq:intro-vanish}
\forall (n,i) \in \ZZ^2, \HB^{n,i}(K)=0 \text{ unless } n=i=0 \text{ or } n=1.
\end{equation}
 This, in particular, implies the Beilinson-Soulé conjecture
 for number fields.

The first written comparison between Borel's and Beilinson's
 regulators was given by Rapoport in \cite{Rapop}.
Burgos later provided a more complete and accurate treatment in \cite{Burgos},
  and corrected a discrepancy
 by identifying a missing factor of two in the original comparison.

\bigskip

In particular, the whole procedure to deduce the computation
 of motivic cohomology of number fields is intricate and technically involved.
 The first task we undertake in this work is to give
 a direct proof of \eqref{eq:intro-vanish},
 without appealing to any regulator map.
 We approach this result through a direct analysis of Borel's isomorphism,
 which uses the relation of the K-theory of $\cO_K$
 with the stable real cohomology of the arithmetic group $\SL_r(\cO_K)$.
 Our strategy is to examine the compatibility of each step in Borel’s construction
 with natural $\lambda$-ring structures,
 starting from the canonical one on $K_n(\cO_K)\otimes_\QQ\RR$.
 This ultimately reduces the proof of \eqref{eq:intro-vanish}
 to determining the $\lambda$-weights of the
 indecomposable part of the real homology of the ``compact twin''
 of the real Lie group $\SL_r(K \otimes_\QQ \RR)$.

 We refer the reader to \Cref{cor:weights-cpt-dual} for the final
 determination of the $\lambda$-weights, and to \Cref{sec:Borel} for
 a detailed explanation of the Borel isomorphism and its compatibility
 with $\lambda$-structures. In this context, the referee pointed out the
 approach of Borel and Yang on the rank conjecture (see \cite{BY}),
 which makes use of partly related techniques.
 See also \cite{dJ} for a related result concerning weights in the homology of $\mathrm{GL}_n$.

\subsubsection*{Generic motives} Generic motives were introduced
 in \cite{BeiGen} and in \cite{Deg1} with distinct but complementary objectives.
 They are defined as pro-objects in the category of motivic complexes $\DM(k)$ over a perfect field $k$,
 associated with a function field $E$ over $k$ (a finitely generated extension field) and
 constructed from all possible smooth schematic models $X$ over $k$:
$$
\bM(E):=\plim{X} M(X),
$$
where $M(X)$ denotes the homological motive associated with $X/k$ --- see also \Cref{df:genmot}.
 Generic motives should be considered with twists $\bM(E)\tw n$ for arbitrary integers $n \in \ZZ$,
 defined via $\GG$-twists:  $\un\tw 1:=\un(1)[1] \subset M(\GG)$.
 This definition underscores the need for the $\infty$-categorical framework: in general, the pro-objects
 of a triangulated category do not themselves form a triangulated category, while pro-objects of an $\infty$-category
 naturally inherit the structure of an $\infty$-category (see \Cref{sec:pro-infty}).

In Beilinson's work, generic motives were employed alongside
 the filtration by dimension on the category of motivic complexes $\DM(k)$,
 to characterize uniquely the motivic t-structure, independently of 
 any realization functor.
 In contrast, in the author's PhD work, generic motives were used
 to encode the structure of Rost cycle modules.
 Specifically, the morphisms of generic motives completely capture
 the structural functoriality of cycle modules, allowing
 any motivic complex to give rise to such objects.
 Applying this result to motivic cohomology, one deduces that
 motivic cohomology of fields can be organized in a family of cycle
 modules: fixing an integer $n \in \ZZ$,  one gets
$\ZZ$-graded abelian groups associated with finitely generated extensions $E/k$:
$$
\Hmot^{n+r,r}(E), r \in \ZZ
$$
equipped with 4 basic operations: (D1) corestriction maps,
 (D2) restriction of norm maps, (D3) action of units of $E$ and
 more generally of Milnor K-theory of $E$, (D4)
 residues.\footnote{The reader is referred to \Cref{sec:funct-gen-mot}
 for more details.}

Furthermore, generic motives play a non-conjectural role in relation with the homotopy $t$-structure,
 analogous to the role envisioned by Beilinson
 for the conjectural motivic $t$-structure.
 The existence of the homotopy $t$-structure on the effective motivic complexes $\DMe(k)$
 is a corollary of Voevodsky's
 fundamental theorems on homotopy invariant sheaves with transfers,
 which constitute its heart. This $t$-structure can be extended
 to the (non-effective or stable) category $\DM(k)$  (We recall this aspect  in \ref{num:recall_DM}).
 In this framework, generic motives function like projective objects with respect
 to the homotopy $t$-structure on $\DM(k)$, and  evaluation at generic motives
 gives a conservative family. This aspect will be explained in a section
 recalling  the theory of generic motives; see in particular \Cref{prop:gen-mot_projective}
 for the last assertion.

Therefore, the category of generic motives plays a central role in the study of motivic complexes.
 In particular, the morphisms between generic motives can be interpreted as specialization morphisms,
 admitting a (partial) description in terms of the functoriality of cycle modules.
 In this work, we refine the computation of these morphisms by explicitly determining the generic motives associated with low-dimensional varieties,
 specifically curves and surfaces.
 This illustrates Beilinson's considerations on generic motives,
 as it appears that the fundamental building blocks of motives naturally emerge as constituents of the corresponding generic motives
 of their function fields.
 In the case of curves, Jacobians provide the basic bricks, while for surfaces,
 the so-called transcendental motives of Kahn, Murre, and Pedrini play a similar foundational role.
\begin{propi}\label{prop:gen_mot}
Let $X$ be a smooth proper scheme over $k$.
\begin{enumerate}[leftmargin=*]
\item If $X=C$ is a curve with Albanese scheme $A$ (dual of the Jacobian), there exists a homotopy exact sequence\footnote{here, we use the
 language of stable $\infty$-categories: the term \emph{homotopy exact} means both
 that the first term is the homotopy fiber of the right map
 and the third term is the homotopy cofiber of the left map.
 Consequently, the sequence viewed in the associated homotopy category
 becomes a distinguished triangle. See further \Cref{num:stable}.}
 of pro-motives over $k$:
$$
\pprod{x \in C^{(1)}} \bM(\kappa_x)\tw 1 \xrightarrow{\prod_x \partial_x} \bM(K)
 \xrightarrow{(\pi_0^K,\pi_1^K)} \un \oplus \uA
$$
which is split if and only if $C$ is rational. We refer to \Cref{prop:gmot_tr1}
 for more details and notation.
\item If $X=S$ is a surface over a separably closed field,
 with Albanese scheme $A$, Picard number $\rho$,
 and homological transcendental motive $\M_2^{tr}(X)$,
 one gets two homotopy exact sequences of pro-motives:
$$
\hspace{-4em}\xymatrix@C=20pt@R=20pt{
&&& \pprod{s \in S^{(2)}} \bM(\kappa_s)\tw 2\ar[d] \\
\pprod{x \in S^{(1)}} \bM(\kappa_x)\tw 1\ar^-{\prod_x \partial_x}[rr] && \bM(K)\ar[r] & \bM(S^{\leq 1})\ar[d] \\
&&& {}\makebox[4em][c]{$\un \oplus \uA \oplus \rho.\un(1)[2] \oplus \M_2^{tr}(S) \oplus \uA(1)[2] \oplus \un(2)[4]$}
}
$$

The pro-motive $\M(S^{\leq 1})$ has a precise interpretation in terms of the coniveau filtration: see \Cref{prop:gen-mot-surf} for details.
\end{enumerate}
\end{propi}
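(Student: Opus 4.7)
The plan is to derive both sequences from the Gysin localization triangle applied to open subsets of $X$, pass to the appropriate cofiltered pro-limits, and identify the right-hand terms via known Chow-K\"unneth decompositions. For the curve, recall $\bM(K)=\plim{U\subsetneq C} M(U)$ over nonempty open $U\subset C$. For each such $U$ with complement $Z=\{x_1,\ldots,x_n\}\subset C_{(0)}$, the Gysin triangle attached to the smooth closed pair $(C,Z)$ gives
$$\bigoplus_{x\in Z}M(\kappa_x)\tw 1\to M(U)\to M(C)\to +1,$$
and passing to the pro-limit over $U$ yields
$$\pprod{x\in C_{(0)}}\bM(\kappa_x)\tw 1\xrightarrow{\prod_x\partial_x}\bM(K)\to M(C)\to +1.$$
I would then invoke the Chow-K\"unneth decomposition $M(C)=\un\oplus\uA\oplus\un(1)[2]$ valid for any smooth proper curve, with $\uA$ the $h_1$-motive identified with the motive of the Albanese. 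The projection onto the first two summands, composed with $\bM(K)\to M(C)$, defines $(\pi_0^K,\pi_1^K)$; that the sequence is homotopy exact requires checking that the composition $\bM(K)\to M(C)\twoheadrightarrow\un(1)[2]$ vanishes in the pro-limit — i.e., the fundamental class of $C$ in $\Pic$ dies on sufficiently small opens. For the splitting criterion, rationality of $C$ forces $C\simeq\PP^1$, hence $\uA=0$, and the unit $\un\to\bM(K)$ trivially splits $\pi_0^K$; conversely, a splitting produces a motivic retraction of $\bM(K)$ onto $\uA$ coming from the generic point, which via the cycle-module dictionary for morphisms of generic motives forces $A=0$ and therefore rationality.

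For the surface, a two-step codimension filtration is in order. Set $\bM(S^{\leq 1}):=\plim{V} M(V)$, where $V$ ranges over opens of $S$ with zero-dimensional (codimension $2$) complement. The codimension-$1$ Gysin applied to pairs $U\subset V$ with $V\setminus U$ pure of codimension one, passed to the pro-limit, yields
$$\pprod{x\in S^{(1)}}\bM(\kappa_x)\tw 1\to\bM(K)\to\bM(S^{\leq 1})\to +1,$$
while the codimension-$2$ Gysin for $(V,S)$ gives, after pro-limiting,
$$\pprod{s\in S^{(2)}}\bM(\kappa_s)\tw 2\to\bM(S^{\leq 1})\to M(S)\to +1,$$
and these two triangles assemble into the commutative diagram of the proposition. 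The final step identifies $M(S)$ via the Kahn-Murre-Pedrini refinement of the Chow-K\"unneth decomposition of a smooth proper surface, producing the claimed direct sum featuring the transcendental motive $\M_2^{tr}(S)$ together with the twisted Albanese and Tate summands.

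The main obstacle I anticipate is the converse direction of the splitting criterion on the curve: deducing rationality from a splitting at the pro-motive level is not formal and relies on the cycle-module description of morphisms between generic motives. Subsidiary technical points include the stability of distinguished triangles under cofiltered pro-limits in the pro-derived category of motives (standard but to be verified in this setup), and the precise identification of the summands $b_1\cdot\un(1)[2]$ and $\uA(1)[2]$ in the surface decomposition as the N\'eron-Severi contribution and its Poincar\'e dual. In addition, matching the codimension-$2$ Gysin twist with the notation $\tw 2$ in the statement requires a careful bookkeeping of the shift conventions inherent in the localization triangle.
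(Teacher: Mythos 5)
Your strategy for the surface part matches the paper's exactly (coniveau filtration plus the Kahn--Murre--Pedrini refinement of the Chow--K\"unneth decomposition, with the same two-step pro-limit). The curve part, however, has a concrete gap caused by your choice to work with the proper curve $\bar C$ rather than with an affine model.

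The paper first removes a closed point $x_0$, sets $C=\bar C-\{x_0\}$, and invokes the Suslin--Voevodsky isomorphism $\M(C)\simeq \un\oplus\uA$ for the \emph{affine} curve (equation \eqref{eq:SV-curves}); the Gysin triangles for $C-Z\subset C$ with $Z\subset C$ finite then pass to the pro-limit and give the claimed sequence in one step. You instead use $\M(\bar C)\simeq \un\oplus\uA\oplus\un(1)[2]$ and try to discard the top summand by observing that $\bM(K)\to\M(\bar C)\to\un(1)[2]$ vanishes in the pro-limit. That vanishing is correct (it is the fact that $[x_0]$ dies in $\Pic$ of any small enough open), but it does not let you simply drop the summand: if $F\to B\to C_1\oplus C_2$ is a triangle and the composite $B\to C_2$ is null, then $\mathrm{fib}(B\to C_1)$ sits in a cofiber sequence $C_2[-1]\to F\to \mathrm{fib}(B\to C_1)$, so it is the cofiber of a map $\un\tw 1\to\pprod_{x\in\bar C_{(0)}}\bM(\kappa_x)\tw 1$ rather than the stated pro-product over $C_{(0)}$. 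Identifying that cofiber with $\pprod_{x\in C_{(0)}}\bM(\kappa_x)\tw 1$ amounts to showing the extra $\un\tw 1$ cancels the factor indexed by $x_0$ — which is true, but is exactly the bookkeeping the paper's affine reduction is designed to avoid, and you do not carry it out.

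Your treatment of the splitting criterion also diverges. The paper computes the boundary map $\partial:\un\oplus\uA\to\pprod_{x\in C_{(0)}}\bM(\kappa_x)(1)[2]$ explicitly: it is trivial on $\un$ and, on $\uA$, is the element $(x-x_0)_{x\in C_{(0)}}$ of $\prod_x\Pic^0(\bar C_{\kappa_x})$. Non-splitting then follows because this element is nonzero for a positive-genus curve. Your claim that a splitting of the pro-triangle ``forces $A=0$ via the cycle-module dictionary'' is not a correct deduction — a triangle can split without any of its terms vanishing — so the converse direction in your argument is not established. You should instead compute the boundary along the lines of the paper.
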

\noindent Generic motives are therefore extensions (non-split in general) of the fundamental bricks of (mixed) motives.
 Both results shed new light on morphisms of generic motives. They confirm the fact
 that morphisms of generic motives should be topologically generated by Rost's four basic maps
 (see \Cref{cjt:gen-gmot}).

\bigskip

As a consequence of the preceding result --- specifically the first point in the
 rational case --- and of the determination of motivic cohomology of number fields based on
 Borel's computation, as previously discussed, we deduce that motivic cohomology of fields
 is infinite in a wide range of degrees, depending on their Kronecker dimension.
\begin{thmi}\label{thm:main}
Let $K$ be a field of characteristic $0$ of Kronecker dimension $\delta(K)>1$.
Then for any $n \in [2,\delta(K)]$ and any $i \geq n$, the abelian group $\Hmot^{n,i}(K)$ has infinite rank
 equal to  $\card(K)$.
\end{thmi}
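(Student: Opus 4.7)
The plan is to prove matching upper and lower bounds so that the rank equals the cardinality. The upper bound $\rk \Hmot^{n,i}(K) \leq \card(K)$ is soft: Bloch's cycle complex computing $\Hmot^{n,i}(\Spec K)$ has terms of cardinality at most $\card(K)$ (finitely generated ideals in $K$-algebras of finite type), so the cohomology is bounded likewise. For the lower bound I first reduce to the purely transcendental case $K = L_d := \QQ(t_1,\ldots,t_d)$: fixing a transcendence basis of $K$ over $\QQ$ yields an algebraic inclusion $L_d \hookrightarrow K$, and the existence of transfers for finite separable subextensions (automatic in characteristic $0$) combined with continuity of motivic cohomology along finite subextensions makes the pullback $\Hmot^{n,i}(L_d) \otimes \QQ \to \Hmot^{n,i}(K) \otimes \QQ$ injective; since $\card(K) = \card(L_d)$, it suffices to bound the rank from below for $L_d$.

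The main engine is the classical split Milnor--Bass--Tate exact sequence for motivic cohomology
$$
\Hmot^{n,i}(F(t)) \;\cong\; \Hmot^{n,i}(F) \;\oplus\; \bigoplus_{v \in (\AA^1_F)^{(0)}} \Hmot^{n-1,\,i-1}(\kappa_v),
$$
a cohomological shadow of part~(1) of the proposition above applied to $C = \PP^1$ (where $\uA = 0$ and the triangle splits). For the base case $n = 2$, I apply MBT in the variable $t_d$. When $i = 2$ the rational points $v = (t_d - a)$ with $a \in L_{d-1}$ already yield a summand $\bigoplus_{a \in L_{d-1}} L_{d-1}^\times$ of rank $\card(L_{d-1})^2 = \card(L_d)$. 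When $i \geq 3$, I instead select the closed points $v_g$ defined by the polynomials $t_d^2 + g^2$ for $g \in L_{d-1}^\times$: each is irreducible over $L_{d-1}$ because the inclusion $L_{d-1} \subseteq \RR(t_1,\ldots,t_{d-1})$ forbids $-1$ from being a square, and the residue field is canonically $L_{d-1}(\sqrt{-1}) = \QQ(\sqrt{-1})(t_1,\ldots,t_{d-1})$. Iterating MBT in $t_1,\ldots,t_{d-1}$ then exhibits $\Hmot^{1,i-1}(\QQ(\sqrt{-1}))$ as a direct summand of $\Hmot^{1,i-1}(\kappa_{v_g})$; the motivic reformulation of Borel's theorem recalled in \Cref{sec:Borel} gives this group $\QQ$-rank exactly $1$ for every $i - 1 \geq 2$, because $\QQ(\sqrt{-1})$ has $r_2 = 1$ and both Borel formulas $r_1 + r_2$ and $r_2$ evaluate to $1$. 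The polynomials $t_d^2 + g^2$ are pairwise distinct modulo $g \sim -g$, producing $\card(L_{d-1}) = \card(L_d)$ independent rank-$1$ summands.

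For the inductive step $n \in [3, d+1]$, I iterate the MBT splitting $n - 2$ times in the variables $t_d, t_{d-1}, \ldots, t_{d-n+3}$, at each stage restricting to rational points; the resulting direct summand of $\Hmot^{n,i}(L_d)$ is
$$
\bigoplus_{(a_d,\,\ldots,\,a_{d-n+3})} \Hmot^{2,\,i-n+2}(L_{d-n+2}),
$$
indexed by a set of cardinality $\prod_{j=1}^{n-2} \card(L_{d-j}) = \card(L_d)$. Since $d - n + 2 \geq 1$ and $i - n + 2 \geq 2$, the base case applied to each summand delivers rank $\card(L_{d-n+2}) = \card(L_d)$, for a total rank of $\card(L_d) = \card(K)$. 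The delicate part of the plan is the base case for $i \geq 3$: purely rational residues yield only $\Hmot^{1,i-1}(\QQ)$, which vanishes in the parity where $i$ is odd, so Borel's computation must be fed in through a family of closed points whose residue fields uniformly contain $\QQ(\sqrt{-1})$. The family $\{t_d^2 + g^2\}_{g \in L_{d-1}^\times}$ is designed precisely for this purpose, and critically its cardinality is $\card(L_{d-1})$, matching the target rank.
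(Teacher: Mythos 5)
Correct, and it follows essentially the same strategy as the paper: an upper bound on the rank from the size of the explicit cycle-complex presentation \eqref{eq:cohm-presentation}, a reduction to a purely transcendental subfield via the transfer argument of \Cref{cor:inclusion-splits}, an iterated rational-curve splitting (\Cref{prop:gmot-rational1} and \Cref{prop:gmot-higher-trdeg}, which, applied to motivic cohomology, is precisely your Milnor--Bass--Tate sequence), and Borel's computation \Cref{thm:BorelB} to detect the non-vanishing of the residue contributions. Your explicit family $\{t_d^2+g^2\}_{g\in L_{d-1}^\times}$, engineered so that the residue fields contain $\QQ(\sqrt{-1})$, is a careful working-out of the step which the paper treats tersely in the proof of \Cref{prop:cohm-uncount} via ``$\oplus_{x\in F}\HB^{1,i-n+1}(\QQ[i])$''.
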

The proof of this theorem is straightforward, once the generic motive of a pure transcendental extension is understood
 (\Cref{prop:gmot-rational1})
 and \Cref{prop:gen_mot}(1) is established,
 along with the fact that motivic cohomology groups $\HB^{1,n}(E)$ have positive rank whenever $E$ is totally imaginary and $n>1$.
 The former, obtained as early as 2002 in the author's PhD thesis \cite{Deg1}, generalizes a classical result of Bass and Tate \cite{BT},
 while the latter has already been discussed in the historical introduction.
 However, the preceding consequence for the motivic cohomology of fields, which we believe is significant,
 seems to have gone unnoticed in the literature so far.
 We refer the reader to \Cref{sec:cohm-fields} for further discussion.

Strikingly, motivic cohomology of the real or complex numbers is uncountable in 
 bi-degree $(n,i)$ whenever $i \geq n \geq 2$.
 In terms of Bloch's higher Chow groups, this leads to the following result, which,
 though not strictly an extension of Mumford’s classical geometric theorem on Chow groups \cite{Mum},
 shares a similar flavor from an arithmetic perspective, being a consequence of Borel’s profound computation of higher K-groups:
\begin{cori}
Let $X$ be a complex smooth algebraic variety, or a real one
 with a rational point.
 Then for all integers $(p,q) \in \NN$ such that $p \leq q \leq 2p-2$,
 the higher Chow group:
$$
\CH^p(X,q)
$$
has infinite, uncountable rank.
\end{cori}

It is worth noting that, just as in Mumford's result for surfaces of general type,
 the occurrence of infinite rank in our setting is restricted to cycles in codimension
 $p>1$.

Though the preceding theorem shows that motivic cohomology of fields is usually enormous,
 the behavior with respect to purely transcendental extensions suggests the following plausible
 conjecture:
\begin{cjti}\label{cjt:main}
Let $K$ be a field finitely generated over its prime field $F$.
 Then, one has the following vanishing of the rational motivic cohomology of $K$ in
 terms of its Kronecker dimension $\delta(K)$
$$
\HB^{n,i}(K)=0
$$
unless $(n,i)=(0,0)$ or $n \in [1,\delta(K)]$.
\end{cjti}
The conjecture is known in any case for $\delta(K) \leq 1$.
 We refer the reader to the end of \Cref{sec:cohm-fields} for further discussion.

\subsection{Guide for reading}

The conventions and main notation of this paper are collected in \Cref{sec:Conv}.
 We use the language of $\infty$-categories,
 whose interest for generic motives has already been highlighted in the introduction.
 A comprehensive review of the foundational concepts required for the paper is provided
 in the Appendix, \Cref{sec:infty-cat}.
 In \Cref{sec:mot-cpx}, we present Voevodsky's theory of motives over a perfect field
 using the $\infty$-categorical framework.
 This section serves both as a reference for the remainder of the paper
 and as a concrete introduction to the $\infty$-categorical language
 through a significant case study.
 For readers already familiar with motivic complexes
 and willing to accept their $\infty$-categorical enhancement without further exposition,
 this section can be safely skipped on a first read.
 
\Cref{sec:generic-mot} develops the theory of generic motives,
 extending the definitions given in \cite{Deg1, Deg5} using
 the enhanced framework of $\infty$-categories.
 We prove various properties of these objects,
 in close relation with Voevodsky's homotopy $t$-structure.
 From a topos theoretic side, they can be considered as points according
 to \Cref{cor:gmot-pts}. From the homotopical perspective, they behave
 like projective objects as explained in \Cref{prop:gen-mot_projective}.
 We next give new presentations of the corresponding category (\Cref{sec:present-DMgen})
 and recall the link with Rost cycle modules (\Cref{sec:funct-gen-mot}).

In \Cref{sec:comput-gmot},
 we set the stage with \Cref{cjt:gen-gmot}, addressing the structure of morphisms of generic motives.
 We then proceed to new computations of generic motives,
 extending the known case of rational curves (\Cref{sec:rat-curve})
 to include arbitrary curves (\Cref{sec:positive-genus})
 and surfaces (\Cref{sec:surfaces}).

The final part of these notes, \Cref{sec:cohm-fields-main},
 focuses on motivic cohomology of fields.
 We begin with the case of number fields, showing in \Cref{sec:Borel}
 how to derive the ranks of rational motivic cohomology groups from Borel’s computation of K-groups,
 with particular attention to the interplay with appropriately defined $\lambda$-structures.
 We then then apply the results on generic motives of purely transcendental extensions of rank one
 to prove \Cref{thm:main}: see \Cref{prop:cohm-inf-rk} and \Cref{prop:cohm-uncount}.
 The section concludes with a discussion of \Cref{cjt:main} (in the text: \Cref{conj:vanishing})
 and a brief exploration of a potentially fruitful cohomology theory
 that emerges naturally from these considerations.

\subsection{Conventions}\label{sec:Conv}

\emph{Category theory}.
We will work with $\infty$-categories,
 as defined in \cite{LurieHTT, LurieHA, CisHC}.
 As previously mentioned, a comprehensive account of the theory is provided in \Cref{sec:infty-cat}.
 As a rule, we use script letters for $\infty$-categories and roman letters for (ordinary) categories.

Throughout, all our $\infty$-categories will be stable and presentable monoidal $\infty$-categories
 (see \Cref{df:presentable}, \Cref{df:icat-stable}).
 We will often refer to $\infty$-functors simply as functors when no confusion can arise.
 For a given $\infty$-category $\iC$, we denote by $\Map_\iC$ the mapping space functor
 (see \Cref{num:mapping-icat} and \Cref{rem:exactness-map}).
 We let $\Ho\iC$ be the associated homotopy category (see \Cref{num:associated-htp}),
 equipped with its canonical triangulated structure (see \Cref{thm:stable->triangulated}).
 We write $\Hom_\iC$ for the abelian groups of morphisms
 in $\Ho\iC$, so that 
$$\Hom_\iC(X,Y)=\pi_0 \Map_\iC(X,Y)$$
 where $\pi_0$ is the $0$-th stable homotopy group of the corresponding spectrum.
 The use of $\infty$-categories is particularly justified by the consideration of pro-objects,
 for which we give a quick account in \Cref{sec:pro-infty}.

With the exception of the appendix,
 we say monoidal for symmetric monoidal (applied to functors, categories, $\infty$-categories).
 We say \emph{rigid} or \emph{strongly dualizable} for an object in such a monoidal category that
 admits a strong dual.

We use homological conventions for $t$-structures (see also \Cref{df:infty-t-struct}).
 Concretely, this means:
\begin{itemize}
\item positive objects (denoted $M>0$) are stable under positive suspension,
\item $\Hom(M,N)=0$ for $M \geq 0$ and $N<0$,
\item one has a homotopy exact sequence:\footnote{Recall the $\infty$-categorical terminology \Cref{num:stable}}
 $\tau_{\geq 0} M \rightarrow M \rightarrow \tau_{<0} M$ for all $M$.
\end{itemize}

Given an abelian group $A$, we put $A_\QQ=A \otimes_\ZZ \QQ$ and $A[1/e]=A \otimes_\ZZ\ZZ[1/e]$
 for an integer $e>0$.
 If $\iC$ is a stable $\infty$-category, we let $\iC_\QQ$ (resp. $\iC[1/e]$)
 be its $\QQ$-localization (resp. $e$-localization) --- obtain by localizing
 with respect to morphisms $p.\Id$ for every prime $p$ (resp. morphisms $e.\Id$).

\bigskip

\noindent \emph{Algebraic geometry}.
We will fix a perfect base field $k$.\footnote{This assumption can be dropped
 altogether if one works with $\ZZ[1/e]$-coefficients where $e$ is the characteristic
 exponent of $k$. This comes from the fact that the motivic categories then become
 invariant under inseparable extensions allowing a reduction to the perfect closure.
 See \cite{SusDM}.}

We say smooth for smooth of finite type. We let $\Sm_k$ be the category of smooth $k$-schemes.
 We denote by $\Smc_k$ the additive category of smooth $k$-schemes with finite correspondences
 as morphisms (see below).

Given a scheme $X$, we denote by $X^{(n)}$ (resp. $X_{(n)}$) the set of points
 of $X$ of codimension $n$
 (resp. whose closure has Krull dimension $n$).

A function field over $k$ will be a separable finitely generated extension $E$ of $k$.
 Unless otherwise specified, a \emph{valuation} $v$ on $E/k$ is assumed to be discrete of rank $1$,
 with ring of integers $\cO_v$ that is a $k$-algebra essentially of finite type.
 When one wants to be precise, one says that $v$ is geometric.
 We also simply say that $(E,v)$ is a valued function field.

\bigskip

\noindent \emph{Motives}.
Our notation for motivic complexes is detailed in Section \Cref{sec:mot-cpx},
 together with the natural $\infty$-categorical enhancements.
 We denote by $\un$ the unit for the $\otimes$-structure on motivic complexes,
 effective and stable, and use the special notation for twists:
$$
\un\tw 1=\un(1)[1].
$$ 
\subsection{Acknowledgement}
Among those who greatly inspired me in the theory of motives, Jacob Murre shares a place with Sasha Beilinson
 and Vladimir Voevodsky.

I would like to sincerely thank Simon Pepin-Lehalleur, Jan Nagel, Marc Levine, Raphaël Ruimy, Robin Carlier,
 Swann Tubach, Tess Bouis, Jörg Wildeshaus and Rob de Jeu for various discussions and remarks
 which greatly contributed to the writing of this paper.
 Special thanks go to Olivier Taïbi without whom Borel's regulator would still be inaccessible to me.
 He helped me figure out its crucial compatibility with $\lambda$-operations.

Finally, I am deeply grateful to the editors of this volume, Jan Nagel and Chris Peters, for their trust and unwavering patience, the opportunity to contribute,
 and their thorough help in refining the paper --- from linguistic accuracy to the clarity of its mathematical formulations.
 I thank the anonymous referee for his vigilance.

The author is supported by the ANR project ``HQDIAG'', ANR-21-CE40-0015.

\section{An $\infty$-categorical presentation of Voevodsky's motivic theory}

\subsection{The $\infty$-category of motivic complexes and its stabilization}\label{sec:mot-cpx}

The purpose of this section is twofold.

 First we give  the notation and terminology from Voevodsky's theory of motivic complexes
 over a perfect field $k$, following the assumptions laid out in \Cref{sec:Conv}.
 While the central definition can be given over an arbitrary base,
 as shown in \cite[Part III]{CD3}, several key theorems --- outlined below ---
 require the base field $k$ to be perfect.\footnote{As mentioned, the interested reader
 can find the necessary arguments to work over a non-perfect base field $k$ after inverting
 its characteristic exponent in \cite{SusDM}.}
 This has been presented in several sources: \cite[Chap. 5]{VSF} (the original reference), \cite{MVW},
 \cite{Deg7} and \cite{BV}.

Second, we present the canonical $\infty$-categorical model of all the triangulated monoidal categories
 arising in Voevodsky's theory, so as to take full advantage of the $\infty$-categorical framework.
 This is easy using the model categorical enhancements of the theory given in \cite{Deg9, CD3},
 so that we use these works as references. We will also outline 
 a way to get direct constructions using the theory of $\infty$-categories as recalled in \Cref{sec:infty-cat}.

In this presentation, we work with $\ZZ$-coefficients, but the framework can naturally be extended to
 $R$-linear coefficients for an arbitrary ring $R$.
 To indicate the coefficient rings, we use the notation $\iDMe(k,R)$ and $\iDM(k,R)$,
 with similar conventions for other $\infty$-categories.

\begin{num}\textit{The effective category}. (\cite{Deg7}, \cite[\textsection 4.1, 5.1]{Deg9}, \cite[\textsection 11.1]{CD3})
Recall that our base field $k$ is perfect.
Voevodsky's construction starts with the introduction of the additive
 category $\Smc_k$ of smooth $k$-schemes equipped with finite correspondences.
 This is a monoidal category whose tensor product is defined by the cartesian product on $k$-schemes
 and by the exterior product of finite correspondences.
 Using the graph of morphisms of smooth $k$-schemes, one obtains the so-called \emph{graph functor}
 $\gamma:\Sm_k \rightarrow \Smc_k$.

The category of sheaves with transfers $\Shtr(k)$ over $k$
 is defined as the category of abelian presheaves $F$ on $\Smc_k$ such that $F \circ \gamma$
 is a Nisnevich sheaf.
 An important example is given by the sheaf with transfers $\Ztr(X)$, representable
 by a smooth $k$-scheme $X$. Its value over a smooth $k$-scheme $Y$ is the abelian
 group of finite correspondences $c(X,Y)$.
 For an arbitrary field, one shows that $\Shtr(k)$ is a Grothendieck abelian category.
 In particular, we can consider its derived $\infty$-category
 $\iDer(\Shtr(k))$ (see \Cref{ex:presentable-derived}). Note that,
 following our convention, this is indeed a presentable and stable $\infty$-category
 (see \Cref{ex:GAb-stable-infty}).
 Moreover, according to \Cref{ex:t-structures-icat}, it admits a canonical
 $t$-structure (see \Cref{df:infty-t-struct}). We simply denotes by $\uH_i$ its $i$-th homology object.

One obtains the $\infty$-category of \emph{motivic complexes} $\iDMe(k)$ by
 considering the $\AA^1$-localization of $\iDer(\Shtr(k))$: one formally
 inverts morphisms $\Ztr(\AA^1_X) \xrightarrow{p_*} \Ztr(X)$ induced by the canonical projection
 for an arbitrary smooth $k$-scheme $X$. It is obvious that its homotopy category
 is Voevodsky's triangulated category $\DMe(k)$ of (unbounded) motivic complexes.

By the mechanism of localization in presentable $\infty$-categories
 (see \Cref{ex:left-loc}), $\iDMe(k)$ can be identified with
 the full-$\infty$-subcategory of $\iDer(\Shtr(k))$ made of motivic complexes $K$
 that are $\AA^1$-local: \emph{i.e.} for any smooth $k$-scheme $X$, the induced morphism
$$
p^*:H^*_\nis(X,K) \rightarrow H^*_\nis(\AA^1_X,K)
$$
is an isomorphism. Let us recall the first key theorem of Voevodsky in this theory.
 This is where the assumption that $k$ is perfect occurs.
\begin{thm}\label{thm:Voevodsky-A1}
Consider the above notation and an arbitrary motivic complex $K$.
 Then $K$ is $\AA^1$-local
 if and only if its homology sheaves $\uH_i(K)$ are $\AA^1$-invariant.
\end{thm}
Using the Nisnevich hypercohomology spectral sequence, one reduces to the case
 where $K$ is a sheaf with transfers in degree $0$, in which case the proof can be found
 in \cite[Th. 4.5.1]{Deg7} or \cite{SusDM} in the non perfect case.
 One deduces that the $\infty$-category $\iDMe(k)$ admits a $t$-structure, called the \emph{homotopy $t$-structure},
 whose heart $\HI(k)$ is made of $\AA^1$-invariant sheaves with transfers,
 also called \emph{homotopy sheaves} 
 (see \cite[\textsection 5.1]{Deg9}).

Again by applying the general mechanism of localization in presentable $\infty$-categories,
 one gets the $\AA^1$-localization functor denoted by $L_{\AA^1}$, an exact endofunctor of $\iDer(\Shtr(k))$.
 According to the previous theorem,
 $L_{\AA^1}(K)$ can be identified with Suslin's singular complex $\sus(K)$.
 The \emph{motive} (\emph{aka} motivic complex) of a smooth $k$-scheme $X$ of finite type is the complex
 $M(X)=\sus(\Ztr(X))$.

One gets a monoidal $\infty$-categorical structure on $\iDMe(k)$ by using the monoidal model structure
 of \cite[Ex. 4.12]{CD1} or \cite{ROmod}, and applying the construction of \Cref{num:model-tensor}.\footnote{We
 sketch below a procedure to avoid the recourse to one of these monoidal model categories.}
 The tensor product on $\iDMe(k)$ is characterized by the fact that the $\infty$-functor 
 $M:\Nrv \Smc_k \rightarrow \iDMe(k)$ is monoidal: $M(X) \otimes M(Y)=M(X \times Y)$.

The $\infty$-category of \emph{effective geometric motives} $\DMgme(k)$ is the sub-$\infty$-category of $\DMe(k)$
 spanned by finite sums, extensions, shifts and direct summands of motivic complexes of the form $M(X)$ for $X$ smooth.
 Being geometric in this context is equivalent to being compact in the homotopy category $\DMe(k)$.
\end{num}

\begin{rem}
If one seeks a direct $\infty$-categorical construction of the monoidal structure on $\iDMe(k)$,
 the following approach can be used:
\begin{itemize}
\item Start with the presentable $\infty$-category $\PSh(\Smc)$
 of presheaves on the nerve of $\Smc$ as defined before \Cref{df:presentable}.
\item Equip this $\infty$-category with a monoidal structure induced by the Day convolution product (\Cref{ex:monoidal-icat}).
\item Localize with respect to the Nisnevich topology.\footnote{Using Nisnevich distinguished squares is enough.}
 One will still need to use \cite[Prop. 10.3.9]{CD3} to ensure that this localization is well-behaved.
\item Stabilize this monoidal $\infty$-category (see \Cref{ex:stabilization}).
\item Perform a $\ZZ$-linearization procedure  to get a presentable monoidal $\infty$-category
 whose underlying $\infty$-category is equivalent to $\iDer(\Shtr(k))$.
\end{itemize}
After these steps, one can proceed with the $\AA^1$-localization as described above,
 as it will be compatible with the monoidal structure (in the sense of \ref{df:ifty-monoidal-loc}).

Alternatively, one can directly use the derived $\infty$-category $\iDer(\mathrm{Ab})$,
 equipped with its canonical monoidal structure. Indeed, one can start with the $\infty$-category of presheaves
 $\Fun\big((\Smc)^{\op},\iDer(\mathrm{Ab})\big)$. As in the previous construction, it naturally inherits a monoidal structure
 via the Day convolution product (using \cite{GlasDay} in this context). 
 One then proceeds as before, by applying Nisnevich localization and $\AA^1$-localization.
 This approach bypasses the need for stabilization and $\ZZ$-linearization, and remains closer in spirit
 to the original construction.
\end{rem}

\begin{num}\textit{The stable category}.\label{num:recall_DM} (\cite[\textsection 4.2, 5.2]{Deg9}, \cite[\textsection 11.1]{CD3})
One defines the \emph{Tate twist} by the following formula:
$$
\un(1)=M(\PP^1_k)/M(\{\infty\})[-2]=M(\GG)/M(\{\infty\})[-1].
$$
 Recall that the motivic complex $\un(1)$ is concentrated in degree $1$ equal to the sheaf with transfers
 $\GG$. It is convenient, and justified from the point of view of the homotopy $t$-structure,
 to introduce the following redundant notation for twists:
$$
\un\tw 1=\un(1)[1]=\GG.
$$
Using the construction detailed in \Cref{num:otimes-inversion},
 there exists a universal (monoidal stable presentable) $\infty$-category $\iDM(k)$ along with
 a functor $$\Sigma^\infty:\iDMe(k) \rightarrow \iDM(k)$$ called the infinite (Tate) suspension functor,
 such that $\Sigma^\infty \un(1)$ is $\otimes$-invertible.
 The functor $\Sigma^\infty$ admits a right adjoint $\Omega^\infty$,
 the infinite (Tate) loop space functor.
 We will still simply denote by $\un_k=\Sigma^\infty M(\Spec(k))$ the unit for
 the tensor structure on $\iDM(k)$. Objects of $\iDM(k)$ are called \emph{ordinary motivic spectra}.
 Here, we add the adjective ordinary to make a difference with the objects of the stable homotopy $\infty$-category
 $\iSH(k)$ which are now frequently called motivic spectra. In this paper, there will be no risk of confusion
 so we just use the terminology \emph{motivic spectra} for objects of $\iDM(k)$.
 We denote its homotopy category by  $\DM(k)$, aligning with the notation used in the references indicated above.

There exists a unique $t$-structure on the $\infty$-category $\iDM(k)$ 
 (see \Cref{df:infty-t-struct})
 such that the functors $\Omega^\infty$
 and tensoring with $\un\{1\}$ are $t$-exact (see \cite[Prop. 4.7]{Deg11}).
 Its heart is equivalent to the abelian category $\HM(k)$
 of \emph{homotopy modules with transfers}
 (we will simply call them homotopy modules in this work, as there is no risk of confusion
 with the objects of the heart of the stable homotopy category over $k$).

To describe the latter, we introduce Voevodsky's $(-1)$-construction.
 Given a homotopy sheaf $F$, one defines a new homotopy sheaf $F_{-1}$, whose sections
 over a smooth $k$-scheme $X$ fit into the following split short exact sequence:\footnote{Alternatively,
 one can show that $F_{-1}=\uHom(\GG,F)$ using the internal Hom in $\HI(k)$.}
\begin{equation}\label{eq:(-1)-construction}
0 \rightarrow F(X) \xrightarrow{p^*} F(\GGx X) \rightarrow F_{-1}(X) \rightarrow 0.
\end{equation}
One then defines a homotopy modules as a
 $\ZZ$-graded homotopy sheaf $F_*=(F_n)_{n \in \ZZ}$ equipped
 with a sequence of isomorphisms $(F_{n+1})_{-1} \simeq F_n$ for $n \in \ZZ$.
 When $\E$ is a motivic spectrum, we denote by $\uH_{n,*}(\E)$ its $n$-th homology object,
 viewed as a homotopy module, where the symbol $*$ refers
 to the internal grading.

A cornerstone of the theory, Voevodsky’s Cancellation Theorem (see \cite{VoeCancel})
 asserts that the functor $\Sigma^\infty$ is fully faithful.
 Consequently, we will use the same notation for both $M(X)$ and $\Sigma^\infty M(X)$.
 Moreover, it follows that $\Sigma^\infty$ induces a canonical fully faithful functor
 on the respective hearts:
$$
\sigma^\infty:\HI(k) \rightarrow \HM(k).
$$

As in the effective case, the $\infty$-category of \emph{geometric motives} $\iDMgm(k)$ is the sub-$\infty$-category of $\iDM(k)$
spanned by finite sums, extensions, shifts and direct summands of motives of the form $M(X)(n)$ for $X$ smooth, $n \in \ZZ$.
 Again, being geometric is equivalent to being compact in the homotopy category $\DM(k)$.
\end{num}

\begin{num}\textit{Duality and Chow motives}.\label{num:recall-dual}
Let $X$ be a smooth $k$-scheme. We let $h(X):=M(X)^\vee=\uHom(M(X),\un_k)$ be its canonical dual,
 the \emph{cohomological motive} associated with $X$.\footnote{In terms of the six functors formalism,
 one has $h(X)=p_*(\un_k)$, where $p:X \rightarrow \Spec(k)$ is the structural map.}

If $X/k$ is smooth projective of dimension $d$, it follows from \cite[Th. 5.23]{Deg8}
 that $M(X)$ is rigid in $\iDM(k)$ with dual $h(X)$.
 Moreover, there exists a canonical isomorphism $h(X) \simeq \M(X)(-d)[-2d]$.
 For a general smooth $k$-scheme $X$ of dimension $d$ (which can even be assumed to be non-smooth but of finite type),
 one also obtains that $M(X)$ is rigid in the $e$-localized category $\DM(k)[1/e]$,
 with dual $h(X)$. Moreover, one gets an isomorphism $h(X) \simeq M^c(X)(-d)[-2d]$, where $M^c(X)$
 is the associated motive with compact support in $\DM(k)[1/e]$
 (see \cite[Chap. 5, 4.3.7]{VSF} if $e=1$, and \cite[Prop. 8.10, Rem. 7.4]{CD5} in general).\footnote{In terms of six operations,
 one has $M^c(X)=f_*f^!(\un_k)$, \emph{loc. cit.} 8.10.}

Our reference for Chow motives will be \cite{MNP}. One gets a contravariant monoidal fully faithful embedding:
$$
\Chow(k)^{op} \longrightarrow \DMgm(k)=\Ho\iDMgm(k)
$$
with source the pseudo-abelian category of Chow motives over $k$, under which the Chow motive $h(X)=(X,\Delta_X,0)$
 (\emph{op. cit.} \textsection 2.2.1)
 of a smooth projective $k$-scheme $X$
 is mapped to the (homological) motivic complex $\M(X)$.\footnote{This is coherent with the fact $\M(X)=h(X)^\vee$
 as recalled above.}
\end{num}

\begin{rem}\label{rem:DM-bases}
The construction of motivic complexes, effective and stable,
 has been extended to arbitrary noetherian finite dimensional bases $S$.
 This is based on a strategy started by Voevodsky in \cite{VoeSimpl},
 using the extension of finite correspondences due to Suslin and Voevodsky.
 The explicit construction has been written down in \cite[Part III]{CD3},
 which gives appropriate monoidal model categories, and therefore 
 presentable stable monoidal $\infty$-categories $\iDMe(S)$ and $\iDM(S)$.
 It follows the same pattern as above, starting from the derived $\infty$-category
 $\iDer(\Shtr(S))$ of sheaves with transfers over $S$.
 As an example, the objects of $\DMe(S)$ can similarly be described as 
 $\AA^1$-local complexes in $\iDer(\Shtr(S))$.\footnote{One should be careful
 that the simpler description of $\AA^1$-local objects obtained over a perfect field
 using \Cref{thm:Voevodsky-A1} fails over positive dimensional bases,
 as shown by Joseph Ayoub in \cite{AyoubC}.}
 At this time, it is known that $\iDM(S)$ admits all of the six functors, but the localization property
 for closed immersions has only been established in certain restricted cases (see \cite[11.4]{CD3} for details).

On the other hand,
 an alternative strategy to define an appropriate category of stable motivic complexes
 was outlined by Oliver Röndigs and Paul Arne Østvær in their work \cite{ROmod}.
 Indeed, in \emph{loc. cit.} they proved that for a field $k$ of characteristic $0$,
 the triangulated category $\DM(k)$ is equivalent to the category of  modules over the \emph{Eilenberg-MacLane
 motivic spectrum} defined by Voevodsky (\cite{VoeOpen}). Since the latter object was also defined
 over an arbitrary base, one could try to look at modules over some arbitrary base $S$ to define
 motivic complexes over $S$. This strategy can be made effective, but one still runs into
 a major problem in order to get the six functors formalism: one has to prove that the
 motivic Eilenberg MacLane is stable under pullbacks, which is the last conjecture,
 conjecture 17, of the paper \cite{VoeOpen}.

This conjecture was solved in \cite[16.1.7]{CD3} for rational coefficients and a geometrically unibranch
 base scheme $S$. More generally, one can show that the rationalization of $\iDM(S)$ is equivalent
 to the category of modules over an appropriately defined rational motivic Eilenberg-MacLane spectrum
 (\cite[14.2.9]{CD3}).
 Integrally, Voevodsky's conjecture is known for pullbacks of regular $k$-schemes where $k$ is a fixed
 base field; see \cite[Cor. 3.6]{CD5}. This result enabled the authors to realize the program
 of Röndigs and Østvær in their work. Furthermore, by restricting attention to $k$-schemes, inverting the characteristic exponent
 of $k$, and replacing the Nisnevich topology with the $\cdh$-topology, it is proved in \emph{loc. cit.}
 that $\cdh$-local motivic complexes do satisfy the localization property,
 are equipped with the six functors formalism,
 and are equivalent to the category of modules over the appropriately defined Eilenberg-MacLane motivic ring spectrum.

Finally, the most successful strategy to get a working theory of stable motivic complexes
 over arbitrary bases has been developed by Markus Spitzweck in \cite{Spit}, still adopting
 the general strategy of Röndigs and Østvær. Drawing on Bloch's higher Chow groups
 and their extension over Dedekind domains by Marc Levine, Spitzweck constructs
 a suitable Eilenberg-MacLane motivic spectrum over a Dedekind scheme.
 By taking pullbacks in the stable motivic homotopy category, one then obtains the desired $E_\infty$-ring spectrum.

Modules over this $E_\infty$-ring spectrum inherit all the expected properties
 of the six functors formalism from the motivic stable homotopy category
 as established by Ayoub in \cite{Ayoub1}. With rational coefficients, Spitzweck's
 theory coincides with that of \cite{CD3} and that of \cite{AyoubEt}. 
\end{rem}

\subsection{A quick review of motivic cohomology}

\begin{num}\textit{Definition via motivic complexes}.\label{num:cohm-integral}
Let $k$ be a field.
 Let $X$ be a smooth $k$-scheme, and $(n,i) \in \ZZ^2$ be a pair of integers.
 Following Voevodsky (\cite[chap. 5]{VSF}),
 one can define the motivic cohomology of $X$ in degree $n$ and twists $i$ as
 morphisms in the category $\DM(k)$:
$$
\Hmot^{n,i}(X)=\Hom_{\DM(k)}(\M(X),\un(i)[n]).
$$
It follows from the so-called continuity property (see \cite[11.1.24, 11.1.25]{CD3}) that
 these cohomology groups commute with projective limits of schemes whenever the transition morphisms
 of an essentially affine pro-scheme $(X_\lambda)_{\lambda \in \Lambda}$ are dominant (see \emph{loc. cit.} for details).
 So when $X=\lim_{\lambda \in \Lambda} X_\lambda$ we get under the preceding assumptions:
\begin{equation}\label{eq:continuity}
\Hmot^{n,i}(X) \simeq \colim_{\lambda \in \Lambda} \Hmot^{n,i}(X_\lambda).
\end{equation}
These motivic cohomology groups can be computed with the Nisnevich cohomology of the $i$-th motivic complex $\un(i)=\ZZ(i)=\sus(\Ztr(\GG^{\wedge,i}))$:\footnote{Here, $\GG^{\wedge,i}$
 denotes the smash product of the pointed scheme $(\GG,1)$,
 and $\Ztr(\GG^{\wedge,i})$ denotes the associated sheaf with transfers.
 Equivalently, $\Ztr(\GG^{\wedge,i})$ is the cokernel of the map:
$$
\oplus_{i=1}^n \Ztr(\GG^{i-1}) \rightarrow \Ztr(\GG^{n})
$$
obtained by considering all the possible inclusions.}
$$
\Hmot^{n,i}(X) \simeq H^n_\nis(X,\ZZ(i)).
$$

This isomorphism follows from Voevodsky's theory (recalled above) when $k$ is perfect.
 In the general case, $k$ is essentially smooth over its prime field $F$, so $X$ is also essentially smooth over $F$,
 and one to reduce to the case of $F$ using the continuity property.
 Along with the (quasi-)isomorphisms of motivic complexes
$$
\un(i) \simeq \begin{cases}
\ZZ & i=0, \\
\GG[-1] & i=1,
\end{cases}
$$
this leads to the following computations of the motivic cohomology groups:
\begin{equation}\label{eq:cohm1}
\Hmot^{n,i}(X)=\begin{cases}
\ZZ^{\pi_0(X)} & i=n=0, \\
\cO_X(X)^\times &  i=n=1, \\
\Pic(X) & (n,i)=(2,1), \\
0 & \text{$i<0$ or  $n>2i$ or $n-i>\dim(X)$.}
\end{cases}
\end{equation}
One further gets for any integer $n \geq 0$ the following isomorphisms:
\begin{align}
\label{eq:cohm2} \Hmot^{2n,n}(X) & \simeq \CH^n(X) \\
\label{eq:cohm3} \Hmot^{n,n}(E) & \simeq \KM n(E)
\end{align}
with respectively the classical Chow groups of codimension $n$ algebraic cycles
 of $X$ modulo rational equivalence, and the $n$-th Milnor K-theory of any function field $E/k$.

Recall finally that it is known due to a theorem of Voevodsky (\cite{VoeHC})
 that motivic cohomology
 coincides with Bloch's higher Chow groups,\footnote{Bloch's definition, motivated by
 earlier results of Kratzer and Soulé, and by the ongoing excitement that emerged from
 the picture laid down by Lichtenbaum and Beilinson, appeared around 10 years before 
 Voevodsky's definition. For the record,
 it was the desire to correct a mistake in the proof of the localization property
 for higher Chow groups that motivated Suslin to introduce Suslin homology in a
 talk in the CIRM at Luminy in 1987. This seminal definition was a major contribution 
 that shaped Voevodsky's theory of motivic complexes a few years later.
 The error was subsequently fixed by Bloch \cite{BlochMov} and Levine \cite{LevineHC}.}
 according to the following formula:
\begin{equation}\label{eq:cohm&higher-Chow}
\Hmot^{n,i}(X) \simeq \CH^i(X,2n-i).
\end{equation}
\end{num}

\begin{num}\textit{Rational coefficients}.\label{sec:rat-mot-coh}
 Recall that the rational part of the above
 motivic cohomology groups can be compared with Quillen's higher algebraic K-theory
 according to the following fundamental formula:
\begin{equation}\label{eq:rat-Hmot&Kth}
\Hmot^{n,i}(X)_\QQ \simeq K_{2i-n}^{(i)}(X)
\end{equation}
where we have denoted by $K_{2i-n}^{(i)}(X)$ the $i$-th graded parts for the $\gamma$-filtration
 on rational K-theory $K_{2i-n}(X)_\QQ$. As recalled in the introduction,
 the right-hand side was actually 
 taken as a definition of rational motivic cohomology in \cite{Soule} and \cite{BeiHR1}.
 For this reason, we also use the notation due to Riou:
$$
\HB^{n,i}(X)=\Hmot^{n,i}(X)_\QQ.
$$
In the case of a smooth $k$-scheme, the above formula is a consequence of Voevodsky's isomorphism
 \eqref{eq:cohm&higher-Chow}, and the corresponding isomorphism for rational higher Chow groups 
 due to Bloch (\cite{Bloch}).
 A more general proof, valid for an arbitrary regular scheme $X$ (without a base field),
 is given in \cite[14.2.14, 16.1.4]{CD3}.\footnote{For singular schemes $X$,
 several authors have proposed suitable definitions for an integral version of the left-hand side
 of \eqref{eq:rat-Hmot&Kth},
 whose rational part coincides with Soulé's definition.
 This is still an active area of research.}
\end{num}

\section{Review and complement on generic motives}\label{sec:generic-mot}

\subsection{Generic motives and points for the homotopy $t$-structures}

We will use the following convenient definition.
\begin{df}
A $k$-scheme $\cX$ is said to be pro-smooth (resp. essentially smooth) if it is the projective limit
 of a pro-scheme $(X_i)_{i \in I}$ such that $X_i$ is a $k$-scheme of finite type,
 and the transition maps are étale (resp. open immersions) and affine.

 We let $\Sme_k$ be the category of pro-smooth $k$-schemes.
\end{df}
According to \cite[8.13.2]{EGA4}, the category of pro-smooth (resp. essentially
 smooth) $k$-schemes can be viewed as a full subcategory of $\pro\Sm$.
 In fact, the functor $P$ that assigns to a pro-scheme $(X_i)_{i \in I}$ as above 
 the projective limit $\cX=\lim_{i \in I} X_i$ seen as an object of $\Sme_k$ is an equivalence
 of categories. We will say that $(X_i)_{i \in I}$ is a presentation of $\cX$,
 keeping in mind that the choice of a specific presentation is irrelevant. 
 To formulate the next results, we consider an arbitrary quasi-inverse to the functor $P$:
\begin{equation}
\nu:\Sme_k \rightarrow \pro\Sm_k.
\end{equation} 

\begin{ex}\label{ex:essentially-smooth}
\begin{enumerate}[leftmargin=*]
\item Given any smooth scheme $X$, and any point $x \in  X$, the localization $X_{(x)}=\Spec(\cO_{X,x})$
 and henselization $X^h_{(x)}=\Spec(\cO^h_{X,x})$
 of $X$ at $x$ are respectively essentially smooth and pro-smooth over $k$.
\item Our main example will come from function fields $E$ over $k$: $\Spec(E)$
 is essentially smooth over $k$.

As in \cite{Deg5}, one can use a canonical presentation of $\Spec(E)$ by considering
 the set $\mathcal M(E)$ of smooth sub-$k$-algebras $A \subset E$ whose fraction field is $E$.
 Ordered by inclusion, this set is filtered and one has:
$$
\Spec(E)=\lim_{A \in \mathcal M(E)^{op}} \Spec(A).
$$

One gets another presentation by considering a \emph{smooth model} of $E$:
 that is a connected smooth $k$-scheme $X$ with a chosen $k$-isomorphism 
 $\phi:E \simeq \kappa(X)$
 This leads to a canonical isomorphism:
$$
\Spec(E)=\lim_{U \subset X} U
$$
where $U$ ranges over the non-empty open subsets of $X$.
\end{enumerate}
\end{ex}

\begin{num}\label{num:pro-motives_ess-sm}
The composite $\infty$-functor:
$$
\Sm_k \xrightarrow{\M} \iDMgm(k) \rightarrow \pro\iDMgm(k), \quad X \mapsto \M(X)
$$
admits a right Kan extension $\bM$ along the inclusion $\Sm_k \rightarrow \Sme_k$, in the $\infty$-categorical sense
 (see \cite[\textsection 6.4]{CisHC}).
 This follows formally from the fact $\pro\iDMgm(k)$ is complete\footnote{\emph{i.e.} admits all small limits, which follows by construction}
 as an $\infty$-category,
 but one gets a direct construction by using the following commutative diagram:
$$
\xymatrix@R=10pt@C=30pt{
& \Sm_k\ar^\M[r]\ar[ld]\ar[d] & \iDMgm(k)\ar[d] \\
\Sme_k\ar@/_8pt/_{\bM}[rr]\ar^{\nu}[r] & \pro\Sm_k\ar^-{\pro \M}[r] & \pro\iDMgm(k).
}
$$
Indeed, following our convention for pro-objects (see \Cref{num:notation-plimit}),
 we have the following explicit formula:
$$
\bM(\cX)=\plim{i \in I} \M(X_i)
$$
for any presentation $(X_i)_{i \in I}$ of the essentially smooth $k$-scheme $\cX$.
 Following \cite[Def. 3.3.1]{Deg5}, we highlight the following pro-motives:
\end{num}
\begin{df}\label{df:genmot}
Using the above notation, one defines the \emph{generic motive} associated with a function field $E/k$
 as the pro-motive $\bM(E):=\bM(\Spec(E))$.

The $\infty$-category $\iDMgen(k)$ of generic motives (over $k$) is defined as
 the full sub-$\infty$-category
 of $\pro\iDMgm(k)$ whose objects are of the form $\bM(E)\tw n$ for any function field $E/k$
 and any integer $n \in \ZZ$.
\end{df}

\begin{rem}
One should be mindful that the above definition of generic motives is substantially different from that
 in \emph{loc. cit.} as it retains the higher homotopy information, contained in the mapping
 spaces of the $\infty$-category $\iDMgen(k)$.
 However, we will see in \Cref{rem:compare-old&new_DMgen} that the homotopy category of the latter
 coincides with the category introduced in \emph{loc. cit.}
\end{rem}

\begin{num}
The reason for focusing on these particular pro-objects
 is that they form a conservative family of \emph{points} for the $t$-$\infty$-category $\iDM(k)$,
 equipped with its homotopy $t$-structure.
Before making this assertion precise in the following corollary, we introduce a new notation:
 given an $R$-linear presheaf $F$ on $\Sm_k$,
 we denote by $\hat F$ its \emph{left} Kan extension along the inclusion $\Sm_k \rightarrow \Sme_k$.
 Concretely, for any essentially smooth $k$-scheme $\cX$, we have:
$$
\hat F(\cX) \simeq \colim_{i \in I} F(X_i)
$$
for any presentation $(X_i)_{i \in I}$ of $\cX$.
 When $F_*$ is a homotopy module (see \Cref{num:recall_DM}), we simply denote by $\hat F_*$
 the extension to $\Sme$ just defined. In particular, when $\E$ is a motivic spectrum,
 with associated $n$-th homology object $\uH_{n,*}(\E)$ seen as a homotopy module
 (see again \Cref{num:recall_DM}), we will denote by $\hat \uH_{n,*}(\E)$ the extension
 to $\Sme$ defined above.
\end{num}
\begin{prop}\label{prop:pts_DM}
Let $\cO$ be an essentially smooth semi-local $k$-algebra,
 $(n,i) \in \ZZ^2$ a pair of integers,
 and $\E$ a stable motivic complex over $k$. Then there exists a canonical isomorphism:
$$
\Hom_{\pro\iDM(k)}(\bM(\cO)\tw i[n],\E) \simeq \hat \uH_{n,-i}(\E)(\cO)
$$
where on the left-hand side, $\bM(\cO)=\bM(\Spec(\cO))$ and $\E$ is seen as a constant pro-object.
\end{prop}
\begin{proof}
First, recall that $\uH_{n,*}(\E\tw{-i})=\uH_{n,*-i}(\E)$. This reduces to the case $i=0$.
 Next, using the cancellation theorem (recalled in \Cref{num:recall_DM}), one can compute
 the relevant group morphism in $\iDMe(k)$.
 We then apply the hypercohomology spectral sequence for the homotopy $t$-structure,
 extended to the essentially smooth $k$-scheme $\mathcal X=\Spec(\cO)$, which has
 the following form:
$$
E_2^{p,q}=H^p(\mathcal X,\uH_{-q,0}(\E)) \Rightarrow \E^{p+q}(\mathcal X).
$$
The $E_2$-term is the cohomology with coefficients in a homotopy module,
 and this coincides with the Chow groups with coefficients in the associated
 cycle module, according to\cite[3.10, 3.11]{Deg9}.
 One deduces from the Gersten conjecture for Chow groups with coefficients,
 as proved in \cite[Th. 6.1]{Rost}, that the $E_2$-term of the above spectral sequence
 is concentrated on the line $p=0$. This concludes the proof.
\end{proof}

\begin{ex}\label{ex:gen-morph&motiv-coh}
Let $X$ be a smooth projective $k$-scheme of dimension $d$.
 Using duality (see \Cref{num:recall-dual}) and the continuity property
 \eqref{eq:continuity}, one immediately deduces the following computation:
$$
\hat \uH_{n,-i}(\M(X))(E)=\Hom_{\pro\iDM(k)}(\bM(E)\tw i[n],\M(X))=\Hmot^{2d-n-i,d-i}(X_E)
$$
where $X_E=X \times_k E$.
\end{ex}

\begin{num}
We now turn to the promised interpretation of generic motives
 as ``points'' for the homotopy $t$-structure on $\iDM(k)$.
 The term points is used here by analogy with the more classical notion
 of points in topos theory. Indeed, a point of a topos can be seen as a fiber functor,
 an exact functor to the punctual topos.

In our setting, we will leverage the fact (see \Cref{rem:exactness-map})
 that any stable $\infty$-category
 admits mapping spaces with values in the $\infty$-category of spectra $\iSp$,
 which we see as the analog of the punctual topos.\footnote{In fact,
 the $\infty$-category $\iSp$ is the initial presentable stable $\infty$-category.}
 Then the exactness property of fiber functors is replaced by the $t$-exactness property,
 with respect to the standard (Postnikov) $t$-structure on spectra.
\end{num}
\begin{cor}\label{cor:gmot-pts}
For any function field $E/k$ and integer $i\in \ZZ$, the $\infty$-functor
$$
\iDM(k) \rightarrow \iSp, \E \mapsto \Map_{\pro\iDM(k)}(\bM(E)\tw i,\E)
$$
is $t$-exact, with respect to the homotopy $t$-structure on the left and the
 canonical $t$-structure on the right.
 Moreover, the family of these $t$-exact $\infty$-functors indexed by a function field $E/k$ and an integer $i \in \ZZ$,
 is conservative.\footnote{\emph{i.e.} a morphism in $\iDM(k)$ is an isomorphism
 if and only if its image by all the functors of the family considered is an isomorphism.
 See also \Cref{num:functors-icat}.}
\end{cor}
This corollary is a compact form of the following two assertions:
\begin{itemize}
\item A motivic spectrum $\E$ is non-negative (resp. negative) for the homotopy $t$-structure
 if and only if for any $(E,i)$ as above, the spectrum $\Map(\bM(E)\tw i,\E)$ is non-negative (resp. negative).
\item A morphism of motivic spectra $f:\E \rightarrow \F$ is an isomorphism
 if and only if for any $(E,i)$ as above, the map $\Map(\bM(E)\tw i,f)$ is an isomorphism
 in the $\infty$-category of spectra.
\end{itemize}
\begin{proof}
Indeed, \Cref{prop:pts_DM} applied to the case $\cO=E$ gives the formula:
$$
\pi_n \Map_{\pro\iDM(k)}(\bM(E)\tw i,\E)=\Hom_{\pro\iDM(k)}(\bM(\cO)\tw i[n],\E)
 =\hat \uH_{n,-i}(\E)(\cO).
$$
Then the corollary follows from the definition of the homotopy $t$-structure
 in \Cref{num:recall_DM}, and one of the main results of Voevodsky's theory of motivic complexes:
 the fact that function fields form a conservative family for homotopy sheaves
 (see \cite[Prop. 4.7]{Deg7}).
\end{proof}

\begin{rem}\label{rem:functor-Rost-transform}
In more classical terms, one gets that the family of exact functors with values in the category
 of abelian groups:
\begin{equation}\label{eq:functor-Rost-transform}
\HM(k) \rightarrow \mathrm{Ab}, F_* \mapsto \Hom_{\pro\HM(k)}\big(\uH_0(\bM(E))\tw i,F_*\big)=\hat F_{-i}(E)
\end{equation}
is conservative. In fact, this assertion can be deduced directly from Voevodsky's result \cite[Prop. 4.7]{Deg7}.
\end{rem}

\begin{num}
Beilinson has already discussed the conjectural \emph{projective} nature of generic motives
 with respect to the conjectural motivic $t$-structure (see \cite[Cor. 3.3]{BeiGen}).

We will now formulate a similar, non-conjectural, property of generic motives
 with respect to the homotopy $t$-structure.
 Let us first recall from \cite[\textsection 6.1]{GLdB} that,
 given an arbitrary stable $\infty$-category $\T$ equipped with a $t$-structure,
 an object $K$ of $\T$ is called \emph{derived $t$-projective} if the following two conditions
 hold:
\begin{enumerate}[label=(\roman*)]
\item For all $L$ in $\T$, $\Hom_\T(K,L)\simeq \Hom_{\T^\hrt}(H_0(K),H_0(L))$.
\item The object $H_0(K)$ is projective in the abelian category $\T^\hrt$.
\end{enumerate}
To address the case of generic motives, one needs an appropriate reformulation.
\end{num}
\begin{df}
Let $\T$ be an $\infty$-category equipped with a $t$-structure (see \Cref{df:infty-t-struct}),
  and let $\mathcal K$ be a pro-object of $\T$.
 The canonical homological $\infty$-functor $H_0:\T \rightarrow \T^\hrt$,\footnote{According to our usual
 abuse of notation, the target is really the nerve of the category $\T^\hrt$. Of course,
 this $\infty$-functor is equivalent to the more classical one: $H_0:\Ho(\T) \rightarrow \Ho(\T)^\hrt$,
 using \Cref{ex:associated-htp}(2) and the identification $\T^\hrt=\Ho(\T)^\hrt$.}
 induces a functor on pro-categories 
$$
\pro\T \rightarrow \pro{\T^{\hrt}}
$$
that we will abusively denote by $H_0$.\footnote{Note that according to \Cref{sec:pro-infty},
 the right-hand side is the ordinary category of pro-objects of $\T^\hrt$, which is therefore abelian.
 However, we will not use this fact in the sequel.}

One says that $\mathcal K$ is \emph{derived pro-$t$-projective} if the following conditions hold:
\begin{enumerate}[label=(\roman*)]
\item For all $\E$ in $\T$, $\Hom_{\pro\T}(\mathcal K,\E)\simeq \Hom_{\pro\T^\hrt}(H_0(\mathcal K),H_0(\E))$.
\item The functor $\T^\hrt \rightarrow \ab, A \mapsto \Hom_{\pro\T^\hrt}(H_0(\mathcal K),A)$
 is exact.
\end{enumerate}
\end{df}
The second condition sometimes appears in the literature under the name ``pro-projective''.
 The next proposition gives us the desired property of generic motives.
\begin{prop}\label{prop:gen-mot_projective}
Let $E/k$ be a function field and $i \in \ZZ$ an integer.
 Then the generic motive $\bM(E)\tw i$ is derived pro-$t$-projective
 with respect to the homotopy $t$-structure on $\iDM(k)$.
\end{prop}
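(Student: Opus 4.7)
The plan is to verify the three conditions in the $\infty$-categorical characterization of derived pro-$t$-projectivity: homological non-negativity of $\bM(E)\tw i$ in $\pro\DM(k)$, the mapping identification $\Hom_{\pro\DM(k)}(\bM(E)\tw i,L) \simeq \Hom_{\pro\HM(k)}(H_0(\bM(E)\tw i),H_0(L))$ for all $L$, and projectivity of $H_0(\bM(E)\tw i)$ in $\pro\HM(k)$. The main input is \Cref{prop:pts_DM}, which is applicable because the function field $E$ is trivially an essentially smooth henselian (in fact local) semi-local $k$-algebra.

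For non-negativity, I would fix a smooth model $X$ of $E/k$, so that $\bM(E) = \plim{U \subset X}\M(U)$ over non-empty open subschemes. Each motive $\M(U)=\sus(\Ztr(U))$ is non-negative in $\DMe(k)$ for the homotopy $t$-structure since $\Ztr(U)$ is a sheaf concentrated in degree $0$; by full faithfulness of $\Sigma^\infty$ (cancellation) together with the characterization of the homotopy $t$-structure on $\DM(k)$ that makes $\Omega^\infty$ $t$-exact, $\M(U)$ remains non-negative in $\DM(k)$. Since tensoring with $\un\tw 1$ is $t$-exact by construction, the same holds for $\M(U)\tw i$, and the presentation exhibits $\bM(E)\tw i$ as non-negative in $\pro\DM(k)$.

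For the mapping identification and projectivity, \Cref{prop:pts_DM} applied with $\cO = E$ and $n=0$ yields
\[
\Hom_{\pro\DM(k)}(\bM(E)\tw i, L) = \hat\uH_{0,-i}(L)(E)
\]
for any $L \in \DM(k)$. Taking $L = H_0(L) \in \HM(k)$ on the right-hand side, we have $\uH_{0,-i}(H_0(L)) = H_0(L)_{-i}$ because $H_0(L)$ lies in the heart; combined with the corresponding formula from \Cref{cor:gmot-pts} for $\pro\HM(k)$, this matches $\Hom_{\pro\HM(k)}(H_0(\bM(E)\tw i), H_0(L))$, yielding condition (ii). For projectivity, I must show that the functor $F_* \mapsto \hat F_{-i}(E) = \colim_{U \subset X} F_{-i}(U)$ on $\HM(k)$ is exact. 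By the definition of the left Kan extension and continuity, this is the stalk-type functor computing global sections of the Nisnevich sheaf $F_{-i}$ on $\Spec E$; since $\Spec E$ is a field, Nisnevich cohomology of any sheaf on it vanishes in positive degrees, so taking sections is exact, and this exactness is preserved by passing to the $-i$-graded component of the homotopy module.

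The main obstacle, minor but requiring care, is the bookkeeping in the third step: one must confirm that short exact sequences in $\HM(k)$ yield short exact sequences of Nisnevich sheaves in the $-i$-graded component (which follows directly from the definition of $\HM(k)$ as a graded abelian category with transfers) and then invoke the vanishing of higher Nisnevich cohomology on $\Spec E$. Everything else is a direct translation of \Cref{prop:pts_DM} and the setup of the homotopy $t$-structure recalled in \Cref{num:recall_DM}.
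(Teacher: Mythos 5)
Your proposal is correct and follows the same route the paper indicates: the paper dispatches this proposition as an immediate corollary of \Cref{prop:pts_DM} via the three-condition characterization it just stated, and you have simply filled in the routine verifications (non-negativity of each $\M(U)$, the matching of $\hat\uH_{0,-i}(L)(E)$ with $\Hom_{\pro\HM(k)}(H_0(\bM(E)\tw i),H_0(L))$, and exactness of the stalk functor via vanishing of Nisnevich cohomology on $\Spec E$). One small remark: for the exactness in the last step you could also argue directly from \Cref{prop:pts_DM} applied at $n=\pm 1$, since $\uH_{\pm 1,*}(H_0(L))=0$ because $H_0(L)$ lies in the heart, which bypasses the Nisnevich-cohomology bookkeeping you flag as the delicate point.
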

\begin{proof}
We prove that the pro-motive $\bM(E)\{i\}$ satisfies the conditions
 of the preceding definition. In fact, point (i) follows from \Cref{prop:pts_DM},
 while point (ii) follows from \Cref{rem:functor-Rost-transform}. 
\end{proof}

\subsection{Categorical variants and presentations of generic motives}\label{sec:present-DMgen}

In this section, we will show how one can compare various constructions of generic motives.
 We start with a key lemma.
\begin{lm}\label{lm:compute_HOM_DMgen}
Let $E$, $F$ be function fields over $k$, and $n \in \ZZ$.
 Let $X$, $Y$ be smooth $k$-models of $E$ and $F$, respectively.
 Then there exists a canonical isomorphism, where $V$ and $U$ range over non-empty open subsets:
\begin{align*}
&\pi_0 \left(\underset{V \subset Y}\lim\  \underset{U \subset X}\colim \Map_{\iDM(k)}(\M(U),\M(V)\tw n)\right) \\
 & \qquad \simeq  \underset{V \subset Y}\lim \underset{U \subset X}\colim\left(\pi_0 \Map_{\iDM(k)}(\M(U),\M(V)\tw n)\right)
\end{align*}
where we emphasize that on the left-hand side the colimit and limit are taken within the $\infty$-category
 of spaces, while on the right-hand side, they are taken in ordinary categorical sense, in the category of
 abelian groups.
\end{lm}
\begin{proof}
Since filtered colimits are exact, one first deduces the following isomorphism for any $V \subset Y$:
$$
\pi_0 \left(\underset{U \subset X}\colim \Map_{\iDM(k)}(\M(U),\M(V)\tw n)\right) 
 \simeq \underset{U \subset X}\colim\left(\pi_0 \Map_{\iDM(k)}(\M(U),\M(V)\tw n)\right).
$$
Applying \Cref{prop:pts_DM}, one further gets:
$$
\underset{U \subset X}\colim\left(\pi_0 \Map_{\iDM(k)}(\M(U),\M(V)\tw n)\right)
 \simeq \hat \uH_{0,n}(V)(E).
$$
But if $j:W \subset V$ is a non-empty open subset, it follows from \cite[Cor 3.22]{Deg7}
 that the morphism of homotopy invariant sheaves with transfers $\uH_0(W) \rightarrow \uH_0(V)$
 is an epimorphism. Voevodsky's cancellation theorem implies 
 that the same is true for $\uH_{0,n}(W)(E) \rightarrow \uH_{0,n}(V)(E)$.\footnote{We need the cancellation
 theorem only in the case $n>0$ as Voevodsky's $(-1)$-functor --- see \eqref{eq:(-1)-construction} --- is exact.}
 This implies that the inductive system:
 $\left(\uH_{0,n}(V)(E)\right)_{V \subset Y}$ satisfies the Mittag-Leffler condition.
 In particular, one deduces (see e.g. \cite[Th. B]{Goerss}):
$$
\pi_0 \left(\underset{V \subset Y}\lim\ \hat \uH_{0,n}(V)(E)\right)
 \simeq \underset{V \subset Y}\lim \left(\hat \uH_{0,n}(V)(E)\right)
$$ 
and this concludes the proof.
\end{proof}

\begin{num}
For the next statement, we consider the following composition of $\infty$-functors:
$$
\iDMgen(k) \hookrightarrow \pro\iDMgm(k) \rightarrow \pro{\left(\Ho \iDMgm(k)\right)}=\pro{\DMgm(k)}
$$
where the last map is obtained by applying the pro-object construction to the natural map
 $\iDMgm(k) \rightarrow \Ho\iDMgm(k)$. It follows from abstract non-sense (\Cref{ex:associated-htp}(2))
 that this defines an (ordinary) exact functor:
\begin{equation}\label{eq:compare-iDMgen}
\Ho \iDMgen(k) \rightarrow \pro{\DMgm(k)}.
\end{equation}
\end{num}
We now deduce the following result, which allows us to compare 
 \Cref{df:genmot} with the older definition of generic motives used in \cite{Deg5}.
\begin{prop}\label{rem:compare-old&new_DMgen}
The functor \eqref{eq:compare-iDMgen} is fully faithful, with essential image given by
 the ordinary category of generic motives defined in \cite[Def. 3.3.1]{Deg5}.

In other words, the ordinary category of generic motives defined in \emph{loc. cit.}
 is equivalent to the homotopy category of the $\infty$-category of generic motives $\iDMgen(k)$ defined here.
\end{prop}
\begin{proof}
One should be careful that this is not a tautology.
 Indeed, upon unwinding definitions and using Formula \eqref{eq:Map-pro},
 we find that this is precisely the content of the preceding lemma.
\end{proof}

We also remark that, by combining the preceding result with \Cref{prop:pts_DM},
 one can compute homotopy classes of morphisms of generic motives
 inside an abelian category, as expressed in the following proposition.\footnote{For ordinary generic motives,
 this was already remarked in \cite[Cor. 3.4.7]{Deg5}.}
\begin{prop}\label{thm:higher-morph-genmot}
The functor
$$
\Ho \iDMgen(k) \rightarrow \pro\HM(k), \bM(E)\tw n \mapsto \uH_{0,*+n}(\bM(E))=:\uH_{0,*}(\bM(E))\{n\}
$$
is fully faithful.
\end{prop}

\begin{num}
We conclude this subsection by showing that,
 at the cost of inverting the characteristic exponent $e$ of the base field $k$,
 it is possible to consider ind-(cohomological motives) for modeling generic motives.

Recall that the presentable stable $\infty$-category $\DM(k)$ is generated by its compact objects,
 which are precisely the geometric motives (as recalled in \Cref{num:recall-dual}).
 This observation leads to an interpretation of
 the ``big'' category of motives $\DM(k)$ as ind-geometric objects:\footnote{This is well-known,
 but see \cite[Proposition 5.3.5.11]{LurieHTT} for a very general proof.}
\begin{equation}\label{eq:DM=ind-DMgm}
\DM(k) \simeq \ind\DMgm(k).
\end{equation}
Working within a sufficiently large universe,
 we formally deduce the following equivalence of $\infty$-categories
 (see \Cref{rem:ind-pro}):
$$
(\pro\iDMgm(k))^{\op}=\ind\iDMgm(k)=\iDM(k).
$$
Thus, in principle, one can view generic motives as certain motivic spectra,
 provided one works in the opposite category.
 We can make this perspective more concrete by inverting
 the characteristic exponent $e$ of $k$.
 Indeed, as recalled in \Cref{num:recall-dual}, every object of $\DMgm(k)[1/e]$ is rigid,
 and for any smooth $k$-scheme $X$, the dual of $M(X)$ can be computed as the cohomological motive $h(X)$.
 Based on these observations, we obtain the following proposition:
\end{num}
\begin{prop}\label{prop:gen_mot&coh_mot}
The canonical (contravariant) $\infty$-functor:
$$
\big(\iDMgme(k)[1/e]\big)^{op} \rightarrow \iDM(k)[1/e], \big(\bM(E)\{i\}\big) \mapsto \underset{X \in \mathcal M(E)^{op}}{\colim} \big(\h(X)\{-i\}\big)
$$
where $X=\Spec(A)$ ranges over the smooth affine models of $E$ (\Cref{ex:essentially-smooth}(2))
 and the colimit is taken in the $\infty$-category $\iDM(k)$, is fully faithful.
\end{prop}
\begin{proof}
The proof reduces to the following computation of mapping spaces:
\begin{align*}
\Map\big(\colim_Y \h(Y)\tw{-i},\colim_X \h(X)\big)
&\simeq \lim_Y \Map\big(\h(Y)\tw{-i},\colim_X \h(X)\big) \\
&\simeq \lim_Y \colim_X\Map\big(\h(Y)\tw{-i},\h(X)\big) \\
&\simeq \lim_Y \colim_X\Map\big(\M(X),\M(Y)\tw{i}\big) \\
& \simeq \Map\big(\bM(E),\bM(F)\tw i\big)
\end{align*}
where $X$ and $Y$ range over the smooth affine models of some function fields $E$ and $F$, respectively.
 The first isomorphism is formal, the second one follows as $\h(Y)\tw{-i}$ is compact,
 the third one follows by duality, and the last one follows from \eqref{eq:Map-pro}.
\end{proof}

\begin{num}\textit{Ind-Chow motives as (co)models for generic motives}.
The preceding proposition shows that, by working in the opposite category and inverting the characteristic exponent $e$,
 one can replace generic motives by the following motivic spectrum:\footnote{In fact, one can reproduce,
 in a dual fashion, the definitions of the preceding section.
 In particular, there exists a right Kan extension $\bh$ of the functor $\h:\Sm_k \rightarrow \DM(k)$
 along the inclusion $\nu:\Sm_k \rightarrow \Sme_k$:
$$
\xymatrix@C=24pt@R=12pt{
\Sm_k^{op}\ar_\nu[d]\ar^-\h[r] & \DM(k) \\
\Sme_k^{op}\ar_-{\bh}[ru] &
}
$$
such that $\bh(\cX)=\hocolim_{i \in I} \h(X_i)$.}
$$
\bh(E)\{i\}=\colim_{X \in \mathcal M(E)^{op}} \h(X)\tw i
$$
where the colimit is computed in the $\infty$-category $\iDM(k)[1/e]$.

The sub-$\infty$-category of $\iDM(k)[1/e]$ is then equivalent,
 according to the preceding proposition, to the opposite of the $\infty$-category of generic motives
 after inverting $e$.
\end{num}

\begin{rem}
As remarked by the referee, the article \cite{BeiGen} uses the opposite category
 of Voevodsky's triangulated category $\DMgm(k,\QQ)$. This can lead to confusion
 since it is not canonically triangulated. However, the above proposition shows
 that, due to the good properties of Voevodsky's (rational) motives,
 this is not a significant problem. Indeed, in the context of \cite{BeiGen},
 one can work directly in the triangulated category $\DM(k,\QQ)$,
 and use the motivic spectrum
 $\bh(K)$ in place of the ind-object $\mathcal C(\eta)$ in section 3.1 of \emph{loc. cit.}
 without altering the essential content of Beilinson's discussion.
\end{rem}

\subsection{Morphisms of generic motives and cycle modules}\label{sec:funct-gen-mot}

\begin{num}
Generic motives enjoy a rich functorial structure,
 closely mirroring that of Milnor K-theory as developed by Rost in the framework of cycle modules
 \cite[Def 1.1]{Rost}.
 In fact, this functoriality can be described by a category $\efld_k$
 whose objects are pairs $(E,i)$ consisting of a function field $E$ and an integer $i \in \ZZ$,
 and morphisms are described by generators and relations: see the appendix, \Cref{df:efld}.
 Then the aforementioned functoriality can be expressed through the existence of a contravariant functor
 (see \cite[Th. 5.1.1]{Deg5}):
\begin{equation}\label{eq:morph_gen-mot}
\rho:(\efld_k)^{\op} \rightarrow \Ho \iDMgen(k), (E,i) \mapsto \bM(E)\{-i\}.
\end{equation}
We will use this functor in the next section
 so we briefly recall its definition on the generators of the morphisms,
 and refer to \emph{loc. cit.} for the proof of the needed relations.
 We use notation (D1*), (R1a*), and so on, to indicate the data and relations
 on morphisms of generic motives that correspond to the functor $\rho$, according
 to the notation of \Cref{df:efld}.
\end{num}

\begin{num}
We start with the first two types of functorialities satisfied by generic motives:
\begin{enumerate}[leftmargin=11pt, itemindent=2em]
\item[(D1*)] \textbf{Covariant functoriality in function fields}.--
 This functoriality simply arises from the fact that generic motives are obtained by restriction and $\GG$-twists
 of the $\infty$-functor $\bM:\Sme_k \rightarrow \iDMgm(k)$. 
 In particular, an arbitrary morphism $\varphi:E \rightarrow L$ of function fields over $k$
 induces a canonical map:
$$
\varphi^*:\bM(L)\tw i \rightarrow \bM(E)\tw i
$$
which actually defines an $\infty$-functor from the category function fields over $k$
 to $\iDMgen(k)$.
\item[(D2*)] \textbf{Transfers: contravariance in function fields}.-- By construction, Voevodsky's homological motives
 are functorial in the category of finite correspondences $\Smc_k$.
 This gives rise to an $\infty$-functor:
$$
\M:\Smc_k \rightarrow \pro\iDMgm(k).
$$
The definition of finite correspondences can be extended to essentially smooth $k$-schemes $\cX$ and $\cY$.
 The corresponding abelian group $c(\cX,\cY)$ of finite correspondences from $\cX$ to $\cY$ is composed
 of algebraic cycles in $\cX \times_k \cY$ whose support is finite and equidimensional over $\cX$.
 Composition is defined as in \cite[Def. 1.16]{Deg7} and has the same properties.
 Thus, one can define a category $\Smec_k$ of essentially smooth $k$-schemes
 with finite correspondences as morphisms.

Using the continuity property of \cite[Prop. 1.24]{Deg7}, one deduces that the preceding functor
 admits a right Kan extension:
\begin{equation}\label{eq:gencor-DM}
\bM:\Smec_k  \rightarrow \pro\iDMgm(k).
\end{equation}
This immediately provides the desired functoriality. A finite extension of function fields
 $\varphi:E \rightarrow L$ corresponds to a finite surjective (and hence equidimensional) morphism
 $f:\cY=\Spec(L) \rightarrow \Spec(E)=\cX$. Taking the transpose of its graph defines
 a finite correspondence $\tra f \in c(\cX,\cY)$ and therefore a \emph{transfer} (or \emph{Gysin}) map:
$$
\varphi_!:\bM(E)\tw i \rightarrow \bM(L)\tw i.
$$
Note that this construction defines an $\infty$-functor,
 from the category of function fields over $k$, equipped with finite morphisms to $\iDMgen(k)$.
\end{enumerate}
\end{num}

\begin{rem}
Note that the functorial properties (D1*) and (D2*),
 together with the relations (R1a*), (R1b*), (R1c*) they satisfy, can be obtained using
 the category $\Smec$. In fact, one can consider the full subcategory $\Smgen_k$
 of $\Smec$ whose objects are the spectra of function fields over $k$.
 Then, by restricting \eqref{eq:gencor-DM}, one obtains an $\infty$-functor
$$
\Smgen_k \rightarrow \iDMgen(k), \Spec(E) \mapsto \bM(E)
$$
which gives a highly structured version of transfers on generic motives.
 Moreover, note that this functor precisely recovers the data (D1*) and (D2*),
 as well as the relations (R1*), of the functor $\rho$.
\end{rem}

\begin{num}\label{num:D3D4}
 Let us now describe the two other types of functoriality for generic motives:
\begin{enumerate}[leftmargin=11pt, itemindent=2em]
\item[(D3*)] \textbf{Action of Milnor K-theory}.--
Let $E/k$ be a function field, and $n \geq 0$ be an integer.
 One deduces from \eqref{eq:cohm3} the following isomorphism:
\begin{equation}\label{eq:gen_mot&KM}
\Hom_{\iDMgen(k)}(\bM(E),\un\tw n) \simeq \Hmot^{n,n}(E) \simeq K_n^M(E).
\end{equation}
Thus, one can associate to any symbol $\sigma=\{u_1,\hdots,u_n\} \in K_n^M(E)$
 a canonical map $\mu(\sigma):\bM(E) \rightarrow \un\tw n$.

Using the diagonal map $\delta:\Spec(E) \rightarrow \Spec(E) \times_k \Spec(E)$,
 or equivalently, the multiplication map $ E \otimes_k E \rightarrow E$, 
 one obtains the required morphism of generic motives:
$$
\gamma_\sigma:\bM(E) \xrightarrow{\delta_*} \bM(E) \otimes \bM(E) \xrightarrow{\Id \otimes \mu(\sigma)} \bM(E)\tw n.
$$
\item[(D4*)] \textbf{Residues}.-- Let $(E,v)$ be a valued function field over $k$.
 Let $\cO_v$ and $\kappa(v)$ be the associated ring of integers and residue field respectively.
 This induces a closed immersion between essentially smooth $k$-schemes
 $i:\cZ=\Spec(\kappa(v)) \rightarrow \Spec(\cO_v)=\cX$, with open complement $j:\cU=\Spec(E) \rightarrow \Spec(\cO_v)$.
 The Gysin triangle extends to this general setting and yields a (split) homotopy exact sequence in
 the stable $\infty$-category $\pro\iDMgme(k)$:
$$
\bM(\kappa_v)\tw 1 \xrightarrow{\partial_v} \bM(E) \xrightarrow{j_*} \bM(\cO_v) \xrightarrow{i^!} \bM(\kappa_v)(1)[2]
$$
The map $\partial_v$ is referred to as the \emph{residue map} associated with $(E,v)$.
\end{enumerate}
This concludes the description of the functor $\rho$ of \eqref{eq:morph_gen-mot} on morphisms.
 We refer the reader to \cite{Deg5} for the detailed proof of the relations.
\end{num}

\begin{num}
Let $\E$ be a motivic complex in $\iDM(k)$.
 Seen as a constant pro-object, it represents a functor $\bh_\E:\Hom_{\pro\iDM(k)}(-,\E)$,
 that can be restricted to the category of generic motives.
 Composing further with the functor $\rho$, one obtains a functor:
$$
\hat \E:\efld_k \xrightarrow{\rho^{\op}} \iDMgen(k)^{\op} \xrightarrow{\bh_\E} \ab
$$
which is, by definition, a cycle premodule. We call it the \emph{Rost transform} of $\E$.
 One can check that it is in fact a cycle module. Furthermore,
 according to \cite[Th. 3.7]{Deg9}, one gets:
\end{num}
\begin{thm}\label{thm:deglise}
For any perfect field $k$, the functor
$$
\mathcal R:\HM(k)=\iDM(k)^\heartsuit \rightarrow \MCycl_k, \E \mapsto \hat \E,
$$
 where $\MCycl_k$ is Rost's category of cycle modules,
is an equivalence of categories, with a quasi-inverse given by the functor that associates
 to a cycle module $M$ its $0$-th Chow group functor $\underline H M:=A^0(-,M)$.

Moreover, if $k$ is non-perfect of characteristic $p>0$, then the pair of functors
 $(\mathcal R[1/p],\underline H[1/p])$ obtained after inverting $p$, is an equivalence
 of categories.
\end{thm}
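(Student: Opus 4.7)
The plan is to organize the argument around the functorial bridge built in \Cref{sec:funct-gen-mot}. The first step is to verify that $\mathcal R$ is well defined, that is, that $\hat\E$ is a cycle module and not merely a cycle premodule. That $\hat\E$ is a cycle premodule is essentially formal: composing the representable functor $\Hom_{\pro\DM(k)}(-,\E)$ with the contravariant functor $\rho\colon\efld_k^{\op}\to\Ho\DMgen(k)$ of \eqref{eq:morph_gen-mot} carries each datum (D1*)--(D4*) of \Cref{num:D3D4} to the corresponding datum (D1)--(D4) on $\hat\E$, while the relations on $\rho$ (proved in \cite{Deg5}) translate to the relations (R). The supplementary axioms (FD) (finite support of residues) and (C) (Bloch--Rost closedness) require geometric input: both follow from Voevodsky's Gersten resolution for homotopy invariant sheaves with transfers, combined with \Cref{prop:pts_DM}, which identifies $\hat\E_n(E)$ with $\uH_{0,n}(\E)(E)$.

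The quasi-inverse is built from Rost's machinery. Given a cycle module $M = (M_n)_{n\in\ZZ}$, one defines $\underline H M$ twist by twist as the Nisnevich sheaves $A^0(-,M_n)$. By \cite{Rost}, each $A^0(-,M_n)$ is a homotopy invariant sheaf with transfers on $\Sm_k$, and the tame-symbol relations between $M_{n+1}$ and $M_n$ supply canonical contraction isomorphisms $(A^0(-,M_{n+1}))_{-1}\simeq A^0(-,M_n)$; this is precisely the structure of a homotopy module with transfers in the sense of \Cref{num:recall_DM}.

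To conclude, it remains to exhibit natural isomorphisms for both composites. For $\underline H\circ\mathcal R\,\E\simeq\E$, by the conservativity of generic motives (\Cref{cor:gmot-pts}) it suffices to compare values on every function field in every twist, and by \Cref{prop:pts_DM} both sides return $\uH_{0,n}(\E)(E)$; the transfer and residue structures match because $\rho$ is by construction the geometric implementation of the corresponding operations on Rost's side. Dually, evaluating the Rost transform of $\underline H M$ on $(E,n)$ returns $M_n(E)$ by the same proposition, and naturality over the four generators of $\efld_k$ is automatic. The principal obstacle is the compatibility of the two residue formalisms: on $\HM(k)$ the residues arise from the Gysin boundary of the localization triangle attached to $\Spec(\kappa_v)\hookrightarrow\Spec(\cO_v)$ as in (D4*) of \Cref{num:D3D4}, whereas on $\MCycl_k$ they are axiomatic, and matching them requires recognizing the motivic Gysin boundary as Rost's tame symbol.

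Finally, if $k$ is non-perfect of characteristic $p$, then after inverting $p$ both $\HM(k)$ and $\MCycl_k$ are invariant under the purely inseparable extension $k\hookrightarrow k^{1/p^\infty}$: on the motivic side by the $\cdh$-based description recalled at the end of \Cref{sec:mot-cpx}, and on the cycle module side by standard restriction along purely inseparable residue field extensions. Applying the perfect case to $k^{1/p^\infty}$ therefore descends to the announced equivalence over $k[1/p]$.
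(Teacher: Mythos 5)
Your sketch follows the architecture of the actual proof, but the paper itself does not contain a proof of the perfect case: it defers entirely to \cite[Th.\ 3.7]{Deg9} and only supplies the one-sentence descent argument for the non-perfect statement. Comparing your reconstruction to that cited work, two issues stand out.

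First, the claim that \cite{Rost} shows that each $A^0(-,M_n)$ is a homotopy invariant sheaf \emph{with transfers} is a misattribution. Rost proves homotopy invariance and the sheaf-theoretic behaviour of the cycle cohomology groups, but the transfer structure in Voevodsky's sense --- extending $A^0(-,M_n)$ to a presheaf on the category $\Smc_k$ of finite correspondences compatibly with the rest of the structure --- is not in Rost's paper. Establishing that structure (so that $\underline H M$ really lands in $\HM(k)$) is one of the substantive contributions of \cite{Deg1,Deg9}, not a quotation. Treating it as given hides one of the nontrivial halves of the equivalence.

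Second, in your comparison of the two composites, conservativity of generic motives can only \emph{detect} an isomorphism; it cannot \emph{produce} one. You must first construct the unit and counit as morphisms of homotopy modules and of cycle modules respectively, and only then invoke \Cref{cor:gmot-pts} / \Cref{prop:pts_DM} to check they are isomorphisms on stalks. You rightly flag the compatibility of the Gysin boundary with Rost's tame symbol as "the principal obstacle," but a proof has to resolve it, not just name it --- this identification is the technical heart of the comparison in \cite{Deg9}. Your treatment of the axioms (FD) and (C), and of the non-perfect case via passage to $k^{1/p^\infty}$, does match what the paper gestures at.
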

Note the last assertion was not proved in \emph{loc. cit.} but it easily follows since,
 after inverting $p$, both categories $\HM(k)[1/p]$ and $\MCycl_k[1/p]$ become invariant
 under purely inseparable extensions of the base field $k$.

\begin{rem}\label{rem:gen-mot=reconstruction}
According to \Cref{prop:pts_DM}, for any motivic complexe $\E$, one obtains:
$$
\hat \E(K,n)=\Hom_{\pro\iDM(k)}(\bM(K)\tw n,\E) \simeq \uH_{0,-n}(\E)(K).
$$
In particular, the functor $\E \mapsto \hat \E$ factors through
 the homological functor $\uH_{0,*}$ associated with the homotopy $t$-structure on $\iDM(k)$.

Moreover, the abelian group $\hat \E(K,n)$ can be interpreted as the fiber
 of the Nisnevich sheaf $\uH_{0,-n}(\E)$ at the point (in the topos-theoretic sense) of the Nisnevich site
 $\Sm_k$ corresponding to the function field $K$. 
 As shown in \Cref{cor:gmot-pts},
 generic motives can be viewed as points of the $\infty$-$t$-category $\iDM(k)$,
 equipped with its homotopy $t$-structure.
 The \emph{Rost transform functor} $\mathcal R$ can thus be interpreted as
 the restriction of homotopy modules to this category of points.
 Remarkably, restricting a homotopy module $M$
 to this enriched category --- via the functoriality encoded by $\rho$ ---
 is sufficient to fully reconstruct $M$, as well as morphisms of homotopy modules.

Rost morphisms $(\text D*)$ can be interpreted as \emph{specialization maps}
 between these various points, but one should be careful they are not isomorphisms.
 The category $\iDMgen(k)$ is not a groupoid.
 From this perspective, cycle modules function as generalized local systems,
 with homotopy modules acting as their sheaf-theoretic counterparts.
\end{rem} 
\section{Computations of generic motives}\label{sec:comput-gmot}

Throughout this section, we adopt the notation for motivic complexes
 and generic motives as recalled in the previous two sections.
 Most notably, we make use of the functoriality of generic motives
 as described by the functor \eqref{eq:morph_gen-mot}.

\subsection{Initial remarks about generic motives}

\begin{num}\label{num:hope_PHD}
In \cite[\textsection 9.3.2]{Deg1},
 the question was raised whether morphisms of type (D*) generate
 all morphisms between generic motives. More precisely, is the functor $\rho$, defined in the previous section \eqref{eq:morph_gen-mot},
 full and/or faithful ?

This question has a negative answer (see \Cref{rem:gen-gmot}),
 but there are still interesting cases where the functor $\rho$ does induce an isomorphism on morphisms.
\end{num}
\begin{prop}
Let $(E,n)$ and $(F,m)$ be objects in $\efld_k$ such that $F/k$ is finite.
 Then the following map is an isomorphism:
\begin{align*}
\bigoplus_{x \in \Spec(E \otimes_k F)} K_{m-n}^M(\kappa(x))
 & \rightarrow \Hom_{\iDMgen(k)}(\bM(E)\tw n,\bM(F)\tw m) \\
 \sigma_x & \mapsto \psi_x^* \circ \gamma_{\sigma_x} \circ \varphi_{x!}
\end{align*}
where $\varphi_x:E \rightarrow \kappa(x)$ and $\psi_x:F \rightarrow \kappa(x)$
 are the canonical morphisms induced by the prime ideal $x \subset E \otimes_k F$.

In particular, the map:
$$
\Hom_{\efld_k}\big((E,n),(F,m)\big) \rightarrow \Hom_{\iDMgen(k)}\big(\bM(E)\tw n,\bM(F)\tw m\big)
$$
is an isomorphism.
\end{prop}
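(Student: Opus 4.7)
The plan is to compute $\Hom_{\DMgen(k)}(\bM(E)\tw n,\bM(F)\tw m)$ directly via duality and then identify the resulting group-theoretic decomposition with the prescribed composition formula. Since $F/k$ is finite separable (our convention on function fields), the scheme $\Spec F$ is already smooth of finite type over $k$, hence $\bM(F)=\M(\Spec F)$ is the (constant) motive of a smooth projective $k$-scheme of dimension $0$. Using the invertibility of Tate twists, I would first rewrite
\[
\Hom_{\DMgen(k)}\bigl(\bM(E)\tw n,\bM(F)\tw m\bigr)\simeq
\Hom_{\pro\DM(k)}\bigl(\bM(E)\tw{n-m},\M(\Spec F)\bigr),
\]
and then apply \Cref{ex:gen-morph&motiv-coh} with $X=\Spec F$, $d=0$, twist $i=n-m$, shift $n=0$. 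This yields
\[
\Hom_{\DMgen(k)}\bigl(\bM(E)\tw n,\bM(F)\tw m\bigr)\simeq
\Hmot^{m-n,\,m-n}\!\bigl(\Spec(E\otimes_k F)\bigr).
\]

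Next, I would exploit separability of $F/k$: the $E$-algebra $E\otimes_k F$ is étale, hence a finite product of finite separable field extensions, one for each point $x\in\Spec(E\otimes_k F)$, so that $\Spec(E\otimes_k F)=\coprod_x \Spec\kappa(x)$. Because this disjoint union is finite, the motive $\bM(E\otimes_k F)$ decomposes as a direct sum, and the Mayer–Vietoris/additivity of motivic cohomology combined with \eqref{eq:cohm3} gives
\[
\Hmot^{m-n,\,m-n}\bigl(\Spec(E\otimes_k F)\bigr)\simeq
\bigoplus_{x\in\Spec(E\otimes_k F)} K_{m-n}^{M}\bigl(\kappa(x)\bigr),
\]
producing the abstract isomorphism asserted in the statement.

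The more delicate step is to check that this abstract identification coincides with the explicit formula $\sigma_x\mapsto \psi_x^*\circ\gamma_{\sigma_x}\circ\varphi_{x!}$. For this I would unfold the duality used in \Cref{ex:gen-morph&motiv-coh}: rigidity of $\M(\Spec F)$ together with its self-dual structure (a zero-dimensional smooth projective scheme) identifies the Hom space with $\Hom(\bM(E)\otimes\M(\Spec F),\un\tw{m-n})$, and the canonical decomposition $\bM(E)\otimes \M(\Spec F)=\bigoplus_x \bM(\kappa(x))$ splits this into the direct sum above, where the $x$-component is $\Hom(\bM(\kappa(x)),\un\tw{m-n})\simeq K_{m-n}^M(\kappa(x))$ by \eqref{eq:gen_mot&KM}. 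A projection-formula computation, using that the counit of duality for $\M(\Spec F)$ realizes the trace associated with the transfer $\varphi_{x!}$ and that the diagonal $\delta$ implements the product structure $\gamma_{\sigma_x}$, then shows that the component $\sigma_x\in K_{m-n}^M(\kappa(x))$ maps to exactly $\psi_x^*\circ\gamma_{\sigma_x}\circ\varphi_{x!}$. This is the main technical point, but it is a bookkeeping exercise with the standard six-functor identities since we only deal with finite étale morphisms.

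For the concluding assertion, I would compare this decomposition with the definition of $\Hom_{\efld_k}\bigl((E,n),(F,m)\bigr)$ recalled in the appendix (\Cref{df:efld}): every generator in the latter is already of the form $(\varphi_x,\gamma_{\sigma_x},\psi_x)$ with the relations $\rho$ respects, so the functor $\rho$ sends a basis of $\Hom_{\efld_k}\bigl((E,n),(F,m)\bigr)$ bijectively to the basis of $\bigoplus_x K_{m-n}^M(\kappa(x))$ identified above. The isomorphism thus follows formally from what has been computed.
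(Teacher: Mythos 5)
Your proposal is correct and follows essentially the same line as the paper's proof: both reduce the Hom-group via the rigidity/self-duality of $\M(\Spec F)$ to $\Hmot^{m-n,m-n}(E\otimes_k F)$ (using \Cref{ex:gen-morph&motiv-coh}), then split it into $\bigoplus_x K^M_{m-n}(\kappa(x))$ via \eqref{eq:cohm3}, and finally trace through the duality to identify the isomorphism with $\sigma_x\mapsto\psi_x^*\circ\gamma_{\sigma_x}\circ\varphi_{x!}$ from the definitions of (D1), (D2), (D3). You merely make a couple of steps more explicit (the décomposition of $\Spec(E\otimes_k F)$ by separability, the projection-formula bookkeeping) that the paper leaves to the reader.
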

Thus, in the preceding case, only (D1), norm maps (D2) and multiplication by symbols (D3) are sufficient
 to describe all morphisms of generic motives.
\begin{proof}
The proposition is obtained from the following computation:
\begin{align*}
\Hom_{\iDMgen(k)}\big(\bM(E)\tw n,\bM(F)\tw m\big)
& \stackrel{(1)}\simeq \Hom_{\pro\iDM(k)}\big(\bM(E) \otimes \bM(F),\un\tw{m-n}\big) \\
& \stackrel{}\simeq \Hmot^{m-n,m-n}(E \otimes_k F) \\
& \simeq \bigoplus_{x \in \Spec(E \otimes_k F)} K_{m-n}^M(\kappa(x))
\end{align*}
where the first two isomorphisms follow as in \Cref{ex:gen-morph&motiv-coh},
 particularly using that $\bM(F)=\M(F)$ is rigid, with strong dual $\M(F)^*=\M(F)$,
 and the last one follows from \eqref{eq:cohm3}.
 The assertion regarding the description of the isomorphism follows
 from the construction of data (D1), (D2), (D3) as recalled above.
\end{proof}

The presentation of the category $\efld_k$ given in \Cref{prop:efld_Rost_present}
 implies a vanishing of morphisms in $\efld_k$ in certain degrees. It is notable 
 that this vanishing also holds in the category of generic motives, at least
 after inverting the characteristic exponent $e$ of $k$.
\begin{prop}
For all function fields $E$ and $F$ over $k$,
 and for all pairs of integers $(n,m) \in \ZZ^2$
 such that $n-m>\dtr_k(F)$, one has:
$$
\Hom_{\iDMgen(k)}(\bM(E)\tw n,\bM(F)\tw m)[1/e]=0.
$$
\end{prop}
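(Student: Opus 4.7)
The plan is to combine the concrete formula for $\Hom$-groups between generic motives (\Cref{lm:compute_HOM_DMgen}) with the rigidity of smooth schemes after inverting $e$ (\Cref{num:recall-dual}), and then invoke the vanishing of motivic cohomology in negative twists (\eqref{eq:cohm1}).

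First, I would apply \Cref{lm:compute_HOM_DMgen} with smooth models $X$ of $F$ and $Y$ of $E$ to rewrite the group in question as a projective limit over non-empty opens $V\subset X$ of the colimits
\[
\colim_{U\subset Y}\Hom_{\DM(k)_e}\bigl(\M(U),\,\M(V)\tw{m-n}\bigr),
\]
where each $V$ is smooth of dimension $d:=\dtr_k(F)$. After inverting $e$, \Cref{num:recall-dual} makes $\M(V)$ rigid with dual $\M(V)^\vee\simeq\M^c(V)(-d)[-2d]$, and so the rigidity adjunction rewrites the inner $\Hom$-group as
\[
\Hom_{\DM(k)_e}\bigl(\M(U)\otimes\M^c(V),\,\un(m-n+d)[m-n+2d]\bigr).
\]

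Next, I would argue that the source $\M(U)\otimes\M^c(V)$ is effective, i.e.\ lies in the essential image of $\Sigma^{\infty}:\DMe(k)_e\to\DM(k)_e$. The motive $\M(U)$ is effective by construction; for $\M^c(V)$, the effectivity follows after inverting $e$ from the localization triangle $\M^c(V)\to\M(\overline V)\to\M^c(\overline V\setminus V)$ associated with a smooth projective compactification $\overline V$ of $V$ (resolution of singularities in characteristic zero, or standard variants of de Jong's alterations in positive characteristic), combined with an induction on dimension and a smooth stratification of the boundary. Since $\Sigma^{\infty}$ is monoidal, the tensor product is then effective.

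Under the hypothesis $n-m>d$ the twist $c:=m-n+d$ is strictly negative. Effective objects of $\DM(k)_e$ are generated under colimits and shifts by motives $\M(W)$ of smooth $k$-schemes, and for these one has $\Hom_{\DM(k)_e}\bigl(\M(W)[j],\un(c)[p]\bigr)=\Hmot^{p-j,c}(W)_e=0$ whenever $c<0$ by \eqref{eq:cohm1}. Consequently the inner $\Hom$-group vanishes for every $V$, and the projective limit over $V$ gives zero. The main technical point I anticipate is the effectivity of $\M^c(V)$, which hinges on the existence of smooth compactifications available only after inverting $e$; the remainder is a formal consequence of rigidity and slice-theoretic vanishing.
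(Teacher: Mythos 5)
Your proposal is correct and follows essentially the same route as the paper's proof: reduce to a Hom between geometric motives over smooth models, apply duality after inverting $e$ to rewrite it as a map from the effective motive $\M(U)\otimes\M^c(V)$ into a strictly negative twist of the unit, and conclude by the vanishing of motivic cohomology in negative weights (the paper invokes the cancellation theorem directly, you invoke \eqref{eq:cohm1}, which is the same fact). The only cosmetic difference is that the paper normalizes $n=0$ whereas you keep the twist $m-n$ throughout.
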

\begin{proof}
We may assume that $n=0$ to simplify the notation. Put $d=\dtr_k(F)$.
Let $X$ be a smooth model of $F$ over $k$. According to \Cref{ex:essentially-smooth}(2)
 and \Cref{lm:compute_HOM_DMgen}, one gets:
$$
\Hom_{\iDMgen(k)}(\bM(E),\bM(F)\tw m)
 = \underset{U \subset X}\lim \Hom_{\pro\iDMgm(k)}(\bM(E),\M(U)\tw m)
$$
where the projective limit runs over the non-empty open subschemes of $X$.
 By applying duality, which holds under our assumptions (see \Cref{num:recall-dual}),
 one gets:
\begin{align*}
\Hom_{\pro\iDMgm(k)}&(\bM(E),\M(U)\tw m)[1/e] \\
 & \simeq \Hom_{\pro\iDMgm(k)}(\bM(E) \otimes \M^c(U),\un(m+d)[m+2d])[1/e]
\end{align*}
Moreover, the motive with compact support $\M^c(U)$ is
 effective, as noted in \Cref{num:recall-dual}.
 Thus, by the cancellation theorem, the left hand-side vanishes
 whenever $m+d<0$ as required.
\end{proof}

\subsection{Rational curves}\label{sec:rat-curve}

\begin{num}
Being (formal) projective limits, generic motives are inherently large objects.
 This is confirmed by the following computation, which we recall from \cite{Deg5}. 
\end{num}
\begin{prop}\label{prop:gmot-rational1}
Let $F/k$ be a function field, and consider
 the canonical inclusion $\varphi:F \rightarrow F(t)$.

Given any closed point $x \in \AA^1_{F,\{0\}}$, with residue field $\kappa_x$, we consider
 the following maps:
\begin{itemize}
\item $\varphi_x:\kappa_x \rightarrow \kappa_x(t)$ the canonical inclusion,
\item $\psi_x:F(t) \rightarrow \kappa_x(t)$ the finite morphism induced by the finite field extension $\kappa_x/F$,
\item  $\partial_x:\bM(\kappa_x)\tw 1 \rightarrow \bM(F(t))$ the residue map (D4*)
 associated to the valuation on $F(t)$ corresponding to the closed point $x \in \AA^1_F$.
\end{itemize}
Then there exists a canonical isomorphism:
$$
\bM(F(t)) \xrightarrow{(\varphi^*,\phi)}  \bM(F) \oplus \pprod{x \in \AA^1_{F,{(0)}}} \bM(\kappa_x)\tw 1
$$
arising from the following split homotopy exact sequence in $\iDMgen(k)$:
$$
0 \rightarrow \pprod{x \in \AA^1_{F,\{0\}}} \bM(\kappa_x)\{1\}
 \xrightarrow{d=\prod_x \partial_x} \bM(F(t)) \xrightarrow{\varphi^*} \bM(F) \rightarrow 0,
$$
where the splitting $\phi=(\phi_x)_{x \in \AA^1_{F,\{0\}}}$ of the map $d$ is
 defined for each point $x$ as follows:
$$
\phi_x:\bM(F(t)) \xrightarrow{\psi_{x!}} \bM(\kappa_x(t))
 \xrightarrow{\gamma_{(t-x)}} \bM(\kappa_x(t))\{1\}
 \xrightarrow{\varphi_x^*} \bM(\kappa_x)\{1\}.
$$
\end{prop}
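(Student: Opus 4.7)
The plan is to realize the announced decomposition as the pro-limit of split Gysin localization triangles on $\AA^1_F$. First, by \Cref{num:pro-motives_ess-sm} and \Cref{ex:essentially-smooth}(2) applied to the smooth model $X = \AA^1_F$ of $F(t)$, one has
$$
\bM(F(t)) \;=\; \plim{S}\ \bM(\AA^1_F \setminus S),
$$
where $S$ runs over the filtered system of finite reduced closed subschemes of $\AA^1_F$ of dimension zero. It suffices to produce a split distinguished triangle at each finite level $S$, compatibly with restriction as $S$ grows, and then pass to the pro-limit.

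At the level of a fixed $S = \{x_1,\dots,x_n\}$, the Gysin localization triangle for the codimension-one closed immersion $i_S : S \hookrightarrow \AA^1_F$, combined with $\AA^1$-invariance $\bM(\AA^1_F) \simeq \bM(F)$, reads
$$
\bigoplus_{x \in S} \bM(\kappa_x)\tw 1 \xrightarrow{\oplus \partial_x} \bM(\AA^1_F \setminus S) \xrightarrow{\varphi_S^*} \bM(F) \xrightarrow{+1},
$$
with $\partial_x$ the finite-level avatar of the residue map (D4*) and $\varphi_S^*$ the composite of the pullback along the open immersion with $\AA^1$-invariance. The pro-limit of these triangles over $S$ formally gives back the sequence in the statement.

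To split each finite-level triangle, for each $x \in S$ I would construct the obvious analogue
$$
\phi_x^S : \bM(\AA^1_F \setminus S) \xrightarrow{\psi_{x!}} \bM(\AA^1_{\kappa_x} \setminus S_x) \xrightarrow{\gamma_{(t-x)}} \bM(\AA^1_{\kappa_x} \setminus S_x)\tw 1 \xrightarrow{\varphi_x^*} \bM(\kappa_x)\tw 1
$$
of the map $\phi_x$ in the statement (here $S_x$ is the preimage of $S$ in $\AA^1_{\kappa_x}$, the transfer $\psi_{x!}$ is well-defined because the underlying morphism of schemes is finite, and the last arrow uses $\AA^1$-invariance). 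Identification of the pro-limit of $\phi_x^S$ with $\phi_x$ is immediate. The crucial verification is the residue identity
$$
\phi_x^S \circ \partial_y \;=\; \delta_{xy}\cdot\Id_{\bM(\kappa_x)\tw 1}.
$$
For $y = x$, after base change to $\kappa_x$ the point $x$ becomes the rational point $t=x$, so $(t-x)$ is a uniformizer there and the composite ``multiply by a uniformizer, then take the residue'' is the identity. For $y \neq x$, the base change $\Spec(\kappa_y \otimes_F \kappa_x)$ lies in $\AA^1_{\kappa_x} \setminus \{x\}$ where $(t-x)$ is a unit, so the composition vanishes after taking the residue at $x$. Via \Cref{thm:deglise} and \Cref{thm:higher-morph-genmot}, both identities reduce to the corresponding facts in Rost's cycle module attached to Milnor K-theory, recovering the content of Bass-Tate's splitting of $K^M_*(F(t))$.

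The main obstacle is precisely this residue calculus: controlling the composition of the transfer $\psi_{x!}$ with a residue $\partial_y$ and a symbol multiplication $\gamma_{(t-x)}$ requires the projection formula and Rost's explicit residue formulas, and must be carried out carefully to distinguish the cases $y=x$ and $y \neq x$. Once this is settled, compatibility of the $\phi_x^S$ with the restriction maps $\bM(\AA^1_F \setminus S) \to \bM(\AA^1_F \setminus S')$ for $S \subset S'$ is formal from naturality of the construction, and passing to the pro-limit yields the stated isomorphism in $\DMgen(k)$.
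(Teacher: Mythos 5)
The paper itself does not prove this proposition; it refers to \cite[6.1.1]{Deg5}, so there is no in-paper argument to compare against. Your outline is the correct and natural one, and it is essentially what \emph{loc.\ cit.} does: present $\bM(F(t))$ as the pro-limit over finite $S\subset\AA^1_{F,(0)}$ of the motives $\M(\AA^1_F\setminus S)$, invoke the Gysin localization triangle at each finite level (available in $\pro\DMgme(k)$ by (D4*)), split it with the maps $\phi_x^S$, and pass to the limit.

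Two points are worth sharpening. First, ``reduce via \Cref{thm:deglise} and \Cref{thm:higher-morph-genmot} to Rost's cycle module attached to Milnor K-theory'' is not quite the argument you need: the embedding $\Ho\DMgen(k)\hookrightarrow\pro\HM(k)\simeq\pro\MCycl_k$ detects morphisms, so you would have to verify $\phi_x\circ\partial_y=\delta_{xy}\cdot\Id$ after evaluation in \emph{every} cycle module, not only Milnor K-theory. The cleaner route, which is what actually carries through, is to observe that $\phi_x\circ\partial_y$ lies in the image of the functor $\rho$ of \eqref{eq:morph_gen-mot} and to prove the identity already in $\efld_k$ directly from axioms (R3b), (R3c), (R3d), (R3e) of \Cref{df:efld}; this is the precise form of ``the same residue calculus as Bass--Tate'', and since $\rho$ is a functor the identity transports to $\DMgen(k)$. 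Second, watch the sign: axiom (R3d) involves $\{-\pi\}$, so the claim that the symbol $(t-x)$ (rather than $(x-t)$) yields exactly $\phi_x\circ\partial_x=+\Id$ is where the normalizations of the Gysin residue (D4*) and of the cycle module axioms must be reconciled. This is the concrete content of your ``must be carried out carefully'' caveat, and it is where an error would most plausibly hide. A more cosmetic remark: the last arrow of $\phi_x^S$ is not really an instance of $\AA^1$-invariance once points have been removed; it is simply the covariant functoriality of $\M$ along the structure map of $\AA^1_{\kappa_x}\setminus S_x$ over $\Spec(\kappa_x)$, which in the pro-limit becomes (D1*) for $\varphi_x$.
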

We refer the reader to \cite[6.1.1]{Deg5} for the proof.

\begin{num}
As a corollary of the preceding proposition, we obtain a canonical splitting
 $s_t:\bM(F) \rightarrow \bM(F(t))$ of the map $\varphi^*:\bM(F(t)) \rightarrow \bM(F)$,
 which is uniquely determined by the following commutative diagram:
$$
\xymatrix@R=14pt@C=34pt{
\bM(F(t))\ar^-{1-d \circ \phi}[r]\ar_-{\varphi^*}[d] & \bM(F(t)). \\
\bM(F)\ar_{s_t}[ru] &
}
$$
Indeed, this splitting holds more generally.
\end{num}
\begin{cor}\label{cor:inclusion-splits}
For any extension of function fields $\varphi:E \rightarrow L$,
 the map $\varphi^*:\bM(L) \rightarrow \bM(E)$ is a split epimorphism
 after $\QQ$-localization.
\end{cor}
\begin{proof}
The case where $L/E$ is finite follows from the degree formula\footnote{This formula is for example
 a consequence of (R2c) and the fact that the norm map associated with $\varphi$ acts as multiplication
 by $d$ on the degree $0$ part of Milnor K-theory}:
$$
\varphi^* \varphi_!=d.Id
$$
where $d$ denotes the degree of $L/E$.

In the general case, $L$ is a finite extension of a purely
 transcendental extension $E(t_1,\ldots,t_n)$. We are reduced to the purely
 transcendental case which holds according to the paragraph preceding the statement.
\end{proof}

The following proposition contradicts the hope of \Cref{num:hope_PHD}:
 the functor $\rho$ of \eqref{eq:morph_gen-mot} cannot be full in general.
\begin{prop}\label{prop:gmot_rational}
Let $F/k$ be a function field and $n$ be an integer.

Then there exists an isomorphism of abelian groups:
$$
\Hom_{\iDMgen(k)}\big(\bM(F),\bM(k(t))\tw n\big)
 \xleftarrow{\ \sim\ } \KM n(F) \oplus
 \prod_{\substack{x \in \AA^1_{k,(0)} \\ y \in \Spec(F \otimes_k \kappa_x)}} \KM{n+1}(\kappa_y)
$$
which to a symbol $\sigma \in \KM n(F)$ associates the morphism:
$$
\bM(F) \xrightarrow{\gamma_\sigma} \bM(F)\tw n
 \xrightarrow{\varphi_F^*} \bM(k)\tw n
 \xrightarrow{s_t} \bM(k(t))\tw n
$$
and to the data of a point $x \in \AA^1_{k,(0)}$,
 of a composite extension field $\kappa_y$ of $F$ and $\kappa_x$
 over $k$
$$
F \xrightarrow{\psi_y} \kappa_y \xleftarrow{\varphi_y} \kappa_x
$$
and a symbol $\tau \in \KM{n+1}(\kappa_y)$ associates the morphism:
$$
\bM(F) \xrightarrow{\psi_{y!}} \bM(\kappa_y) \xrightarrow{\gamma_\tau} \bM(\kappa_y)\tw{n+1} 
 \xrightarrow{\varphi_{y}^*} \bM(\kappa_x)\tw{n+1} \xrightarrow{\partial_x} \bM(k(t))\tw n
$$
\end{prop}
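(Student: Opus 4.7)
The plan is to combine \Cref{prop:gmot-rational1}, specialized to the prime field $k$, with the preceding proposition on morphisms into generic motives of finite extensions. The splitting
$$
\bM(k(t))\tw n \,\simeq\, \un\tw n \,\oplus\, \pprod{x \in \AA^1_{k,(0)}} \bM(\kappa_x)\tw{n+1}
$$
from \Cref{prop:gmot-rational1} (with $\un = \bM(k)$) decomposes $\Hom_{\DMgen(k)}(\bM(F), \bM(k(t))\tw n)$, once $\Hom$ from $\bM(F)$ is applied, into two contributions. For the first contribution, \eqref{eq:cohm3} yields $\Hom(\bM(F), \un\tw n) \simeq \KM n(F)$; and the inclusion of this factor into $\bM(k(t))\tw n$ is realized, by construction of the splitting in the paragraph preceding \Cref{cor:inclusion-splits}, by the map $s_t$, so that a symbol $\sigma$ is sent to $s_t \circ \varphi_F^* \circ \gamma_\sigma$, which is exactly the formula in the statement.

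For the second contribution, one first exchanges the pro-product with $\Hom$ from a fixed object, giving
$$
\prod_{x \in \AA^1_{k,(0)}} \Hom_{\DMgen(k)}(\bM(F), \bM(\kappa_x)\tw{n+1}).
$$
Each $\kappa_x$ being finite over $k$, the preceding proposition, applied with source $\bM(F)$ and target $\bM(\kappa_x)\tw{n+1}$, identifies each factor with $\bigoplus_{y \in \Spec(F \otimes_k \kappa_x)} \KM{n+1}(\kappa_y)$, a finite direct sum since $\Spec(F \otimes_k \kappa_x)$ is finite. The summand maps are then $\varphi_y^* \circ \gamma_\tau \circ \psi_{y!}$ in the notation of the statement, matching the description given by that proposition up to a relabeling of $\varphi$ and $\psi$. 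Post-composing with the residue $\partial_x$, which in \Cref{prop:gmot-rational1} realizes the inclusion of $\bM(\kappa_x)\tw{n+1}$ into $\bM(k(t))\tw n$, yields the morphism announced in the statement.

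The main technical point is the commutation of $\Hom_{\DMgen(k)}(\bM(F), -)$ with the pro-product $\pprod{x}$. This follows because limits in the target commute with mapping spaces (by the formula recalled in \Cref{sec:pro}), the pro-product is by construction a cofiltered limit over finite subsets $S \subset \AA^1_{k,(0)}$ of the finite direct sums $\bigoplus_{x \in S} \bM(\kappa_x)\tw{n+1}$, and finite direct sums commute with limits in a stable $\infty$-category. Granted this, the proof becomes an essentially formal assembly of the splittings provided by \Cref{prop:gmot-rational1} and by the preceding proposition on finite extensions.
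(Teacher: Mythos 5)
Your proof is correct and follows essentially the same route as the paper's. The paper also decomposes $\bM(k(t))\tw n$ via \Cref{prop:gmot-rational1} and then identifies each factor $\Hom_{\DMgen(k)}(\bM(F),\bM(\kappa_x)\tw{n+1})$ by duality, which is exactly the content of the unlabeled proposition at the end of \Cref{sec:funct-gen-mot} that you invoke; so your citation of that proposition versus the paper's inline duality argument is a cosmetic difference, not a mathematical one. You are slightly more careful than the paper in justifying the exchange of $\Hom_{\DMgen(k)}(\bM(F),-)$ with the pro-product $\pprod{x}$, which the paper performs silently; that justification (cofiltered limit of finite sums, finite sums commute with limits in a stable $\infty$-category, and the pro-mapping-space formula of \Cref{sec:pro}) is the right one. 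Your identification of the two summand maps with $s_t\circ\varphi_F^*\circ\gamma_\sigma$ and $\partial_x\circ\varphi_y^*\circ\gamma_\tau\circ\psi_{y!}$ also tracks through correctly, including the swap of $\varphi$ and $\psi$ relative to the notation in the cited proposition.
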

\begin{proof}
This follows from the preceding proposition and duality:
\begin{align*}
&\Hom_{\iDMgen(k)}(\bM(F),\bM(k(t)\tw n) \\
 & \simeq \Hom_{\iDMgen(k)}(\bM(F),\un\tw n)
 \oplus 
 \prod_{x \in \AA^1_{k,(0)}} \underset{=\Hom_{\iDMgen(k)}(\bM(F) \otimes \bM(\kappa_x),\un\tw n)}{\underbrace{\Hom_{\iDMgen(k)}(\bM(F),\bM(\kappa_x)\tw n)}}
\end{align*}
One finally concludes using the isomorphism \eqref{eq:gen_mot&KM} and noting that $\kappa_x/k$ is finite separable:
$$
\Spec(F) \times_k \Spec(\kappa_x)=\Spec(F \otimes_k \kappa_x)=\bigsqcup_{y \in \Spec(F \otimes_k \kappa_x)} \Spec(\kappa_y).
$$
\end{proof}

\begin{rem}\label{rem:gen-gmot}
The preceding proposition shows that the functor $\rho$ is not full,
 negatively answering a question in \cite[9.3.2]{Deg1}.
 This follows from the fact that generic motives are pro-objects.
 In particular, a more accurate formulation of the conjecture describing morphisms
 of generic motives could be formulated as follows.
\end{rem}
\begin{cjt}\label{cjt:gen-gmot}
There exists a natural topology on morphisms of generic motives
 such that morphisms of type (Dn*), for $1 \leq n\leq 4$, are topological
 generators. Relations among these topological generators are described
 by relations (Rm*) as spelled out in \Cref{df:efld}.
\end{cjt}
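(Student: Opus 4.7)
The plan is to first pin down the natural topology. Fix smooth models $X$, $Y$ of $E$, $F$ respectively, and write $\bM(F)\tw m=\plim{V \subset Y} M(V)\tw m$. By \Cref{lm:compute_HOM_DMgen}, the abelian group
$$
H := \Hom_{\DMgen(k)}\bigl(\bM(E)\tw n, \bM(F)\tw m\bigr) = \lim_{V \subset Y} A_V, \qquad A_V := \Hom_{\pro\DM(k)}\bigl(\bM(E)\tw n, M(V)\tw m\bigr),
$$
is a projective limit of abelian groups indexed over the non-empty open subsets $V \subset Y$. I would topologize $H$ with the inverse-limit topology, each $A_V$ being discrete: a morphism is then \emph{close to zero} when it vanishes after restriction to all sufficiently small $V$. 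This is the only topology compatible with the pro-structure, and it recovers exactly the ``infinite product'' type expressions met in \Cref{prop:gmot_rational}.

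The main strategy for density of the (D$n$*)-generators runs as follows. By \Cref{prop:gen-mot_projective}, $\bM(E)\tw n$ is derived pro-$t$-projective with respect to the homotopy $t$-structure, so condition (i) in the definition of derived pro-$t$-projectivity yields
$$
A_V \simeq \Hom_{\pro\HM(k)}\bigl(H_0(\bM(E)\tw n),\ H_0(M(V)\tw m)\bigr).
$$
Applying the Rost transform (\Cref{thm:deglise}) converts this into a Hom-group between cycle modules. For fixed $V$, the coniveau filtration on $V$ provides a Gersten/Cousin-type filtration on $H_0(M(V))$ whose graded pieces are cycle modules induced from the function fields $\kappa_y$ with $y \in V^{(p)}$; expressing a morphism of cycle modules through this filtration decomposes it as a sum, indexed over points of $V$, of compositions built from (D1*)--(D4*)---exactly mirroring the rational curve case of \Cref{prop:gmot-rational1,prop:gmot_rational}. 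Passing to the projective limit over $V$ produces the ``infinite product'' morphisms characteristic of $\DMgen(k)$, and density with respect to the topology above then follows. The validity of the relations (R$m$*) in $\DMgen(k)$ is already guaranteed by the construction of $\rho$ in \Cref{sec:funct-gen-mot}, and their \emph{completeness} reduces to the analogous statement for Rost cycle module morphisms, which is built into Rost's axiomatisation.

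The principal obstacle I anticipate is twofold. First, checking that the Cousin decomposition of $H_0(M(V))$ is \emph{compatible with the transition maps} $V' \subset V$ demands delicate bookkeeping with specialisation and residue operators across strata of arbitrary codimension; failures of resolution of singularities in positive characteristic would likely force the argument to run only over $\ZZ[1/e]$, as happens in \Cref{cor:inclusion-splits}. Second, and more serious, is the question of whether the identification with cycle module morphisms really captures all of $H$: the pro-$t$-projectivity of \Cref{prop:gen-mot_projective} identifies $\Hom_{\pro\DM(k)}(\bM(E)\tw n, \mathcal L)$ with its $H_0$-counterpart only when $\mathcal L \in \DM(k)$ is a single object, not a genuine pro-object (cf.\ the warning after that proposition). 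A Milnor-type short exact sequence
$$
0 \to {\lim_V}^1 \Ext^1(H_0(\bM(E)\tw n), H_0(M(V)\tw m)) \to H \to \lim_V \Hom(\ldots) \to 0
$$
must therefore be controlled, and the description of the higher $\Ext$'s in $\HM(k)\simeq\MCycl_k$ in terms of (D*)/(R*) is where I expect the bulk of the technical work to concentrate. Conceivably the topology on $H$ must be refined to absorb the $\lim^1$-contribution, which would give the final, precise formulation of the conjecture.
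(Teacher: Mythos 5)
This statement appears in the paper as an unproven \emph{conjecture}: the author explicitly frames it (in \Cref{rem:gen-gmot}, immediately before the statement) as ``a correct formulation of the conjecture describing morphisms of generic motives,'' prompted by the observation that \Cref{prop:gmot_rational} refutes fullness of the functor $\rho$. No proof is given or claimed in the text, so there is no argument of the paper's to compare yours against; it has to stand on its own.

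As it stands, it does not amount to a proof, and you yourself flag two of the decisive obstructions. Your choice of topology (the pro-limit topology on $H=\lim_V A_V$ with each $A_V$ discrete) is the natural one and consistent with \Cref{lm:compute_HOM_DMgen}, and routing the density claim through the Rost transform of \Cref{thm:deglise} is a sensible plan. But beyond the $\lim^1$-issue and the transition-map bookkeeping you mention, there is a third gap in the density step itself. The coniveau filtration on $H_0(M(V))$ does not split: \Cref{prop:gmot_tr1} and \Cref{prop:gen-mot-surf} show the boundary maps of the coniveau triangles are non-trivial whenever the underlying variety is not rational. Consequently ``expressing a morphism of cycle modules through this filtration'' only yields a filtered object, not a decomposition into sums of (D1*)--(D4*) compositions; to conclude that those maps topologically generate, you would need an argument that passing to the associated graded loses no information modulo the topology, and that argument is absent. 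This is precisely the hard content of the conjecture, and it remains open both in your sketch and in the paper.
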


{\subsection{Curves of positive genus}\label{sec:positive-genus}

\begin{num}\label{num:mot-curves}
Let $K$ be a function field over $k$ of transcendence degree $1$.
 It corresponds to a smooth projective curve $\bar C/k$.
 We fix a closed point $x_0 \in \bar C_{(0)}$,
 and consider the corresponding affine curve $C=\bar C-\{x_0\}$.
 This is a smooth affine model of $K$.

The Jacobian $J$ of $\bar C/k$ is defined as the $k$-scheme
 representing the connected component of the Picard functor
 (see \cite[Th. Def. 27.137]{GW2}).
 On the other hand, the proper pointed $k$-scheme $(\bar C,x_0)$
 admits a unique Albanese scheme $(A,\alpha:\bar C \rightarrow A)$
 such that $\alpha$ maps $x_0$ to the zero element of $A$.
 In fact, letting $J^\vee$ be the dual abelian variety of $J$ over $k$,
 the base point $x_0$ defines a canonical map
 $\bar C \rightarrow J^\vee$, which satisfies the universal property
 of the Albanese scheme of $(\bar C,x_0)$, and we can just put $A=J^\vee$
 (see \cite[Rem. 27.225]{GW2} for a clear account).

One considers the following composite map:
$$
C \xrightarrow j \bar C \xrightarrow \alpha A.
$$
We let $\uA$ be the sheaf of abelian groups represented by $A$ on $\Sm_k$.
 It is $\AA^1$-invariant and admits transfers (see e.g. \cite{Orgo, SS}),
 thus making it an object of $\HI(k)$.
 The preceding map induces a morphism of sheaves with transfers
$$
\Ztr(C) \rightarrow \uA
$$
which gives us a map $\pi_1:\M(C) \rightarrow \uA$ in $\iDMe(k)$.
 We define $\pi_0=p_*:\M(C) \rightarrow \un$ as the map induced by
 the projection.
 
According to a theorem of Suslin and Voevodsky (see \cite[Chap. 5, 3.4.2]{VSF}), the following map in $\iDMe(k)$
\begin{equation}\label{eq:SV-curves}
\M(C) \xrightarrow{(\pi_0,\pi_1)} \un \oplus \underline A
\end{equation}
is an isomorphism. From this, one deduces the following computation of generic motives.
\end{num}
\begin{prop}\label{prop:gmot_tr1}
Consider the notation introduced above,
 and let $\pi_i^K$ be the composition of $\pi_i$ with the canonical map
 $\bM(K) \rightarrow \M(C)$. 

Then we have a homotopy exact sequence
 (see \Cref{num:stable}) in $\pro\iDMgm(k)$:
$$
\pprod{x \in C_{(0)}} \bM(\kappa_x)\tw 1 \xrightarrow{\prod_x \partial_x} \bM(K)
 \xrightarrow{(\pi_0^K,\pi_1^K)} \un \oplus \uA,
$$
where we have denoted by $\partial_x$ the residue map (\Cref{num:D3D4}(D4*))
 associated with the valuation on $K$ corresponding to the closed point $x \in C$.

The boundary map $\partial:\un \oplus \uA \rightarrow \pprod{x \in C_{(0)}} \bM(\kappa_x)(1)[2]$
 of this exact sequence is trivial on the first factor.
 As an element of the abelian group:
$$
\prod_{x \in C_{(0)}} \Hom(\uA,\bM(\kappa_x)(1)[2]) \simeq \prod_{x \in C_{(0)}} \Pic^0(\bar C_{\kappa_x})
$$
it corresponds to the element $(x-x_0)_{x \in C_{(0)}}$ where each $(x-x_0)$ is seen
 as a $0$-cycle of degree $0$ on $\bar C_{\kappa_x}$.
\end{prop}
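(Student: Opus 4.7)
My plan is to deduce all three assertions from the Gysin (localization) triangle for the open immersions $C - Z \hookrightarrow C$ with $Z \subset C_{(0)}$ finite, passing to the cofiltered pro-limit over $Z$. For each such $Z$, the Gysin triangle in $\DMgm(k)$ takes the form
\[
\bigoplus_{x \in Z} \M(\kappa_x)\tw 1 \to \M(C - Z) \to \M(C) \xrightarrow{\partial_Z} \bigoplus_{x \in Z} \M(\kappa_x)(1)[2].
\]
Since the opens $C - Z$ form a cofinal system among non-empty opens of $C$ and $\bM(K) = \plim{U \subset C} \M(U)$ by \Cref{ex:essentially-smooth}, taking $\plim{Z}$ and replacing $\M(C)$ by $\un \oplus \uA$ via \eqref{eq:SV-curves} gives the announced homotopy exact sequence; the identification of the outgoing map with $\prod_x \partial_x$ is the definition of residues in \Cref{num:D3D4}(D4*).

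For the vanishing of $\partial$ on the $\un$ factor, adjunction along the finite morphism $f\colon \Spec(\kappa_x) \to \Spec(k)$ yields $\Hom_{\DM(k)}(\un, \M(\kappa_x)(1)[2]) \simeq \Hmot^{2,1}(\kappa_x) \simeq \Pic(\kappa_x) = 0$, and the mapping-space formula recalled in \Cref{sec:pro} then forces $\Hom_{\pro\DM(k)}(\un, \pprod{x} \M(\kappa_x)(1)[2]) = 0$.

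For the identification on the $\uA$ factor, the Chow--K\"unneth decomposition $\M(\bar C_{\kappa_x}) \simeq \un \oplus \uA_{\kappa_x} \oplus \un(1)[2]$, adjunction, and the degree splitting of $\Pic(\bar C_{\kappa_x})$ together give $\Hom_{\DM(k)}(\uA, \M(\kappa_x)(1)[2]) \simeq \Pic^0(\bar C_{\kappa_x})$. To compute $\partial_x|_{\uA}$, I base change to $\kappa_x$: the point $x$ acquires a canonical rational point of $C_{\kappa_x}$ (its diagonal section), and compatibility of Gysin with base change reduces matters to understanding
\[
\uA_{\kappa_x} \hookrightarrow \M(C_{\kappa_x}) \xrightarrow{\partial_x^{\kappa_x}} \un(1)[2].
\]
Viewed as an element of $\Hmot^{2,1}(C_{\kappa_x}) = \Pic(C_{\kappa_x})$, the map $\partial_x^{\kappa_x}$ is the class of the divisor $x$, whose restriction to the $\uA_{\kappa_x}$-summand lands in the kernel of the degree morphism $\Pic(C_{\kappa_x}) \twoheadrightarrow \ZZ$, namely $\Pic^0(\bar C_{\kappa_x})$. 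Since the Suslin--Voevodsky isomorphism is built from the Albanese morphism $\bar C \to A$ normalized at $x_0$, this class is exactly $[x - x_0]$.

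The main obstacle lies in this last identification: matching the Gysin boundary on the $\uA$-summand with the Abel--Jacobi class $[x - x_0]$. It ultimately reduces to the explicit construction of \eqref{eq:SV-curves} through the Albanese morphism, together with the standard fact that the Gysin map at a rational point $x$ represents the divisor class $[x] \in \Pic$. Once this compatibility is in place, the three assertions follow formally from the Gysin triangle and the pro-limit.
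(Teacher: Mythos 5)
Your proof follows essentially the same route as the paper: localization (Gysin) triangle for finite closed $Z\subset C$, passage to the cofiltered pro-limit, and the Suslin--Voevodsky decomposition \eqref{eq:SV-curves}; your elaboration of the boundary computation (which the paper dismisses with ``thus follows'') is welcome extra detail. One small imprecision at the end: since $C=\bar C-\{x_0\}$ is affine, the degree map on $\Pic(C_{\kappa_x})$ is killed, so there is no surjection $\Pic(C_{\kappa_x})\twoheadrightarrow\ZZ$ --- rather, the Suslin--Voevodsky splitting directly identifies $\Pic(C_{\kappa_x})=\Hom(\uA_{\kappa_x},\un(1)[2])\simeq A(\kappa_x)\simeq\Pic^0(\bar C_{\kappa_x})$, and under the projection $\Pic(\bar C_{\kappa_x})\to\Pic(C_{\kappa_x})\simeq\Pic^0(\bar C_{\kappa_x})$ the divisor class $[x]$ is sent to $[x-x_0]$, which is exactly the claimed identification.
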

Therefore, as soon as the field $K$ is not rational over $k$,
 the boundary map $\partial$ is non trivial, and the above homotopy exact sequence is non-split.
 This means in some sense that the motive $\bM(K)$ is ``maximally mixed''.
\begin{proof}
Let $Z \subset C$ be a closed subscheme. As $k$ is perfect, $Z$ is smooth and one deduces
 a homotopy exact sequence in $\iDMgme(k)$:
$$
\M(Z)\tw 1 \xrightarrow{d^Z} \M(C-Z) \xrightarrow{j^Z_*} \M(C) \xrightarrow{i_Z^!} \M(Z)(1)[2]
$$
Note that by additivity $\M(Z)=\prod_{x \in Z} \bM(\kappa_x)$.
 Thus, taking the projective limit of these homotopy exact sequences over the open complement $U=C-Z$
 yields a homotopy exact sequence in $\pro\iDMgme(k)$ which has the required form up to using
 the mentioned additivity and the isomorphism \eqref{eq:SV-curves}.
 The statement regarding the boundary map $\partial$, which is indeed the projective limit of the Gysin morphisms $i_Z^!$,
 thus follows.
\end{proof}

\begin{num}
In \cite{Deg9}, we associate to any homotopy sheaf $F$ over $k$ a homotopy module
 $F_*=\sigma^\infty(F)$ (see also \Cref{num:recall_DM}) with the formula for $n \geq 0$:
\begin{align*}
F_n&=F \otimes \GG^{\otimes,n}=\uH_0(F\tw n) \\
F_{-n}&=\uHom(\GG^{\otimes,n},F)
\end{align*}
where the tensor product and internal Hom are taken with respect to the canonical monoidal structure
 on $\HI(k)$.\footnote{Recall that $F_{-n}$ is also obtained by iterating Voevodsky's $(-1)$-construction,
 recalled in \eqref{eq:(-1)-construction}. See \cite[1.13]{Deg9} for details.}

This construction can be described more precisely when $F=\uA$ is the homotopy sheaf associated with an abelian variety (see \Cref{num:mot-curves}). Let us recall this description due to Kahn and Yamazaki.
\end{num}
\begin{thm}\label{thm:abelian-var-HI}
Let $A$ be an abelian variety over $k$, and $\uA_*$ be the associated homotopy module as defined above.

Then for any $n>0$ and any function field $E$, one has:
\begin{align*}
\uA_{-n}(E)&=0 \\
\uA_n(E)&=\KSM n(E;A)
\end{align*}
where $\KSM n(E;A)=K(E,A,\GG,\hdots,\GG)$ is the Somekawa K-theory associated with the semi-abelian variety
 $A \times \GG^n$ (see \cite{Somekawa}, or rather \cite[Def. 5.1]{KY} for the correct sign).
\end{thm}
\begin{proof}
For the first isomorphism, we only need to prove that $\uA_{-1}=0$.
 According to \Cref{cor:gmot-pts},
 it is only necessary to check that for any function field $E/k$, one has $\uA_{-1}(E)=0$.
 This follows from the exact sequence \eqref{eq:(-1)-construction}
 and the classical fact that the projection map $p$ induces an isomorphism
 $A(E) \xrightarrow{p^*} A(\GGx E)$, since $A$ is an abelian $k$-scheme.

The second computation follows from the main theorem of \cite{KY},
 stated as (1.1) in the introduction.
\end{proof}

\begin{rem}
The vanishing $\uA_{-1}=0$ is equivalent to say that the sheaf $\uA$ is a ``birational
 invariant'': we refer the interested reader to \cite{KS} for more on this notion
 (see \cite[4.1]{KS} for the case of abelian schemes).
\end{rem}

We can now state the following corollary of \Cref{prop:gmot_tr1}.
\begin{cor}
Consider the notations of the aforementioned proposition,
 and let $E$ be an arbitrary function field.
 Then for any integer $n \in \ZZ$, there exists a short exact sequence
 of abelian groups:
$$
\prod_{\substack{x \in C_{(0)} \\ y \in \Spec(E \otimes_k \kappa_x)}} \KM{n+1}(\kappa_y)
 \rightarrow \Hom_{\iDMgen(k)}\big(\bM(E),\bM(K)\tw n\big)
 \rightarrow \KM n (E) \oplus \KSM n (E;A) \rightarrow 0
$$
where the first map sends a symbol $\tau \in \KM{n+1}(\kappa_y)$
 to the morphism
$$
\bM(E) \xrightarrow{\psi_{y!}} \bM(\kappa_y) \xrightarrow{\gamma_\tau} \bM(\kappa_y)\tw{n+1} 
 \xrightarrow{\varphi_{y}^*} \bM(\kappa_x)\tw{n+1} \xrightarrow{\partial_x} \bM(K)\tw n
$$
given the canonical morphisms $E \xrightarrow{\psi_y} \kappa_y \xleftarrow{\varphi_y} \kappa_x$,
 and the residue map $\partial_x$ associated with the valuation on $K$ corresponding
 to the point $x$ as defined in \Cref{num:D3D4}(D4*).

In particular, when $n=-1$, one gets an isomorphism:
$$
\Hom_{\iDMgen(k)}\big(\bM(E)\tw 1,\bM(K)\big) \simeq \prod_{x \in C_{F,(0)}} \ZZ.
$$
\end{cor}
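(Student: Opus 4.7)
The plan is to apply the functor $\Hom_{\pro\DM(k)}(\bM(F),-\tw n)$ to the homotopy exact sequence of \Cref{prop:gmot_tr1}, viewed as a fiber sequence, and extract the stated exact sequence from the associated long exact sequence on homotopy groups of mapping spectra. Extending the fiber sequence to a cofiber sequence
$$\pprod{x \in C_{(0)}} \bM(\kappa_x)\tw 1 \to \bM(K) \to \un \oplus \uA \to \pprod{x \in C_{(0)}} \bM(\kappa_x)\tw 1[1]$$
in $\pro\DMgm(k)$ and applying $\Map(\bM(F),-\tw n)$ yields a fiber sequence of spectra, whose $\pi_0$-long exact sequence around degree zero reads
\begin{align*}
\Hom(\bM(F), \pprod{x}\bM(\kappa_x)\tw{n+1}) &\to \Hom(\bM(F), \bM(K)\tw n) \\
&\to \KM n(F) \oplus \KSM n(F;A) \to \Hom(\bM(F), \pprod{x}\bM(\kappa_x)\tw{n+1}[1]),
\end{align*}
where the middle-right group is identified via \eqref{eq:gen_mot&KM} together with \Cref{prop:pts_DM} and \Cref{prop:abelian-var-HI} (with the convention $\KSM n = 0$ for $n<0$).

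Next, I would compute the pro-product contributions. Writing $\pprod{x}\bM(\kappa_x) \simeq \plim{Z}\M(Z)$ where $Z$ runs over finite closed subschemes of $C$, and using $\M(Z) \simeq \prod_{x\in Z}\bM(\kappa_x)$ together with the mapping formula for pro-categories from \Cref{sec:pro}, one obtains
$$\Hom\bigl(\bM(F), \pprod{x}\bM(\kappa_x)\tw{n+1}[m]\bigr) \simeq \prod_{x\in C_{(0)}} \Hom(\bM(F), \bM(\kappa_x)\tw{n+1}[m]).$$
Since $\kappa_x/k$ is finite separable, $\bM(\kappa_x)$ is strongly dualizable with $\bM(\kappa_x)^\vee \simeq \bM(\kappa_x)$ (by \Cref{num:recall-dual}), so duality and the decomposition $F\otimes_k\kappa_x \simeq \prod_y\kappa_y$ yield
$$\Hom(\bM(F), \bM(\kappa_x)\tw{n+1}[m]) \simeq \bigoplus_{y \in \Spec(F\otimes_k\kappa_x)} \Hmot^{n+1+m,\,n+1}(\kappa_y).$$
For $m=0$, \eqref{eq:cohm3} identifies this with $\bigoplus_y\KM{n+1}(\kappa_y)$, giving the left-hand term of the stated sequence. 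For $m=1$, the dimensional bound in \eqref{eq:cohm1} forces $\Hmot^{n+2, n+1}(\kappa_y)=0$ since fields have Krull dimension zero; hence the right-hand term in the long exact sequence vanishes, which yields the surjectivity onto $\KM n(F)\oplus\KSM n(F;A)$.

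The composite on the left agrees with the formula in the statement by unwinding the above equivalences: a symbol $\tau\in\KM{n+1}(\kappa_y)$ corresponds to the morphism $\mu(\tau)\colon\bM(\kappa_y)\to\un\tw{n+1}$, and transporting it through the self-duality of $\bM(\kappa_x)$ and the splitting $\bM(F\otimes_k\kappa_x)\simeq\bigoplus_y\bM(\kappa_y)$ reproduces the claimed composite $\bM(F)\xrightarrow{\psi_{y!}}\bM(\kappa_y)\xrightarrow{\gamma_\tau}\bM(\kappa_y)\tw{n+1}\xrightarrow{\varphi_y^*}\bM(\kappa_x)\tw{n+1}$, which the boundary in the long exact sequence then post-composes with $\partial_x\tw n$.

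Finally, for the special case $n=-1$, both $\KM{-1}(F)$ and $\KSM{-1}(F;A)$ vanish, and the preceding term $\Hom(\bM(F),(\un\oplus\uA)\tw{-1}[-1])$ also vanishes by \Cref{prop:pts_DM}, since $\un$ and $\uA$ lie in the heart of the homotopy $t$-structure (so $\uH_1$ of either is zero) and since $\Hmot^{-2,-1}(F)=0$ by negativity of the twist in \eqref{eq:cohm1}. The leftmost map thus becomes an isomorphism $\prod_{x,y}\KM 0(\kappa_y)=\prod_{x,y}\ZZ \xrightarrow{\sim} \Hom(\bM(F)\tw 1,\bM(K))$; identifying the closed points of $C_F=C\times_k\Spec F$ with pairs $(x,y)$ as above finishes the computation. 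The main subtlety is the compatibility of Hom with the pro-product and the precise identification of the extension datum in terms of residues; the rest is bookkeeping combining the long exact sequence with the dimensional vanishing of motivic cohomology of fields.
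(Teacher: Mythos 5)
Your proof follows the same route as the paper: apply $\Hom_{\pro\DM(k)}(\bM(F),-)$ to the homotopy exact sequence of \Cref{prop:gmot_tr1}, identify the pro-product term via self-duality of $\bM(\kappa_x)$ and the splitting $F\otimes_k\kappa_x\simeq\prod_y\kappa_y$ (exactly as in the proof of \Cref{prop:gmot_rational}), identify $\Hom(\bM(F),(\un\oplus\uA)\tw n)$ with $\KM n(F)\oplus\KSM n(F;A)$ via \Cref{prop:pts_DM} and \Cref{prop:abelian-var-HI}, and deduce surjectivity from vanishing of motivic cohomology of fields above the diagonal. Your computation of the obstruction term as $\Hmot^{n+2,n+1}(\kappa_y)=0$ is the correct one (the paper's parenthetical $\Hmot^{n+1,n}(L)$ appears to be off by one, though both groups vanish for the same reason, so nothing is affected). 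The unwinding of the boundary map into $\partial_x\circ\varphi_y^*\circ\gamma_\tau\circ\psi_{y!}$ and the treatment of the case $n=-1$ also match the intended argument.

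One caveat on the last line: the identification you invoke between pairs $(x,y)$ with $x\in C_{(0)}$, $y\in\Spec(F\otimes_k\kappa_x)$, and the set $C_{F,(0)}$ of closed points of $C\times_k\Spec F$ is only valid when $\dtr_k(F)=0$. In general $C_F$ has closed points lying over the generic point of $C$ (e.g.\ for $C=\AA^1_k$ and $F=k(s)$, the point $t=s$), so $C_{F,(0)}$ is strictly larger than the index set $\{(x,y)\}$ that the exact sequence actually produces. This imprecision is already present in the paper's statement; your derivation yields the correct index set, so the computation itself is sound, but the asserted bijection should not be treated as automatic.
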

\begin{proof}
One applies the functor $\Hom_{\pro\iDMgm(k)}(\bM(E),-)$ to the homotopy exact sequence
 of \Cref{prop:gmot_tr1}, and proceeds as in the proof of \Cref{prop:gmot_rational}
 for the first term of the above exact sequence, we use the preceding proposition
 to compute the third one, and then apply the vanishing 
 of motivic cohomology $\Hmot^{n+1,n}(L)$ for any field $L$ for the surjectivity assertion.
\end{proof}

We end this section by a remark about duality.
\begin{lm}\label{lm:dual-abvar}
Consider the notation of \Cref{num:mot-curves}.
 Then the motive $\uA$ is rigid in $\iDM(k)$ and there exists a canonical isomorphism:
$$
\uA^\vee \simeq \uA(-1)[-2].
$$
\end{lm}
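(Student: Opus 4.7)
The plan is to realize $\uA$ as a direct summand of the motive of the smooth projective curve $\bar C$, then to deduce its rigidity and its duality from that of $\M(\bar C)$.

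\textbf{Step 1 (splitting off $\uA$ inside $\M(\bar C)$).} Starting from the Suslin--Voevodsky isomorphism \eqref{eq:SV-curves} in $\DMe(k)$ and passing to $\DM(k)$, I would use the Gysin localization triangle associated with the complementary pair $(C \hookrightarrow \bar C, \{x_0\})$:
$$
\M(C) \to \M(\bar C) \to \M(x_0)(1)[2] \xrightarrow{+1}.
$$
Assuming first that $x_0$ is $k$-rational, the connecting map is split by the fundamental class / projection to the base point, and combining with $\M(C) \simeq \un \oplus \uA$ one obtains a Chow--Künneth-type decomposition
$$
\M(\bar C) \simeq \un \oplus \uA \oplus \un(1)[2].
$$
For a general closed point $x_0$ with residue field $\kappa_{x_0}/k$, the same argument produces the decomposition after replacing $\un(1)[2]$ by the rigid Artin motive $\M(\kappa_{x_0})(1)[2]$, which is harmless for what follows.

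\textbf{Step 2 (rigidity).} Since $\bar C/k$ is smooth projective of dimension $1$, $\M(\bar C)$ is rigid in $\DM(k)$ with strong dual $\M(\bar C)(-1)[-2]$ by \Cref{num:recall-dual}. Rigidity is stable under direct summands in any closed symmetric monoidal $\infty$-category, so $\uA$ is rigid.

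\textbf{Step 3 (computing the dual).} Dualizing the decomposition in Step 1 gives
$$
\un \oplus \uA^\vee \oplus \un(-1)[-2] \simeq \M(\bar C)^\vee \simeq \M(\bar C)(-1)[-2] \simeq \un(-1)[-2] \oplus \uA(-1)[-2] \oplus \un.
$$
To conclude $\uA^\vee \simeq \uA(-1)[-2]$ one needs to cancel the two ``unit'' pieces on both sides. The clean way is to do this via the Poincaré pairing $\pi \colon \M(\bar C) \otimes \M(\bar C) \to \un(1)[2]$: it is compatible with the above decomposition, and the pairing between $\un$ and $\un(-1)[-2]$ is a perfect duality, while $\uA$ is orthogonal to both of them by weight considerations. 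The induced restriction $\uA \otimes \uA \to \un(1)[2]$ is therefore the non-degenerate piece of Poincaré duality, which delivers the canonical isomorphism $\uA^\vee \simeq \uA(-1)[-2]$.

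\textbf{Main obstacle.} The delicate step is Step 3: Krull--Schmidt cancellation is not canonical in $\DM(k)$ with integral coefficients, so one cannot simply ``strike out'' the $\un$ and $\un(-1)[-2]$ summands on both sides. The orthogonality argument via the Poincaré pairing sidesteps this issue, but requires verifying that $\Hom_{\DM(k)}(\uA,\un(i)[j])$ and $\Hom_{\DM(k)}(\un(i)[j],\uA)$ vanish in the relevant bidegrees $(i,j) \in \{(0,0),(1,2)\}$; this follows from the fact that $\uA \in \HI(k)^\heartsuit$ with $\uA(F) = A(F)$ divisible (no $\un$ summand) and $\uA_{-1} = 0$ by \Cref{prop:abelian-var-HI} (no $\un(1)[2]$ summand). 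Equivalently, one may identify $\uA$ with $h_1(\bar C)$ and appeal to the canonical autoduality coming from the principal polarization of the Jacobian $J(\bar C, x_0)$.
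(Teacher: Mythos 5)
Your proof is correct and close in spirit to the paper's, but with a couple of genuine differences worth noting. The paper works directly with the affine curve $C$: it invokes rigidity of $\M(C)$ with strong dual $\M^c(C)(-1)[-2]$ from \Cref{num:recall-dual}, computes $\M^c(C) \simeq \uA \oplus \un(1)[2]$ via the localization triangle for compactly supported motives applied to $(\{x_0\},\bar C)$, combines with $\M(C) \simeq \un \oplus \uA$, and obtains the two-term identity $\un \oplus \uA^\vee \simeq \uA(-1)[-2] \oplus \un$, cancelling the single unit factor by observing it is canonically cut off by the splitting coming from $p\colon C\to\Spec k$. You instead work with the projective curve $\bar C$, use the Gysin triangle to get $\M(\bar C) \simeq \un \oplus \uA \oplus \un(1)[2]$, and then dualize; this is essentially the same decomposition seen from the other side of the compact-support/Gysin duality. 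Your route has a small advantage: rigidity of $\M(\bar C)$ in $\DM(k)$ with integral coefficients is exactly what \Cref{num:recall-dual} delivers for smooth \emph{projective} schemes, whereas the dualizability of the open $\M(C)$ quoted in the paper is, according to the same numéro, only stated after inverting the characteristic exponent $e$ — so your argument is cleaner if one insists on integral coefficients in positive characteristic. The price you pay is a three-term cancellation rather than a two-term one, and your discussion of how to avoid non-canonical Krull–Schmidt cancellation is a little heavier than necessary: the unit and top summands in $\M(\bar C) \simeq \un \oplus \uA \oplus \un(1)[2]$ are picked out by explicit, functorial (co)units (the structure map $p$ and the chosen point $x_0$/fundamental class), so the identification of the complementary summand is already canonical without invoking the full Poincaré pairing. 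Still, your orthogonality argument — $\uA$ has no $\un$-factor since $A(F)$ is divisible, and $\uA_{-1}=0$ by \Cref{prop:abelian-var-HI} — is sound and gives a clean way to see the cancellation. Your fallback via $h_1(\bar C)$ and the principal polarization of the Jacobian is also the standard Chow-motivic proof and is mentioned in the Remark following the lemma in the paper.

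One small point to fix: in Step 1 you wrote the Gysin triangle as $\M(C) \to \M(\bar C) \to \M(x_0)(1)[2]$; this is correct, but you should be explicit that the splitting of the $\un(1)[2]$ off $\M(\bar C)$ when $x_0$ is rational is given by the fundamental class $\un(1)[2] \to \M(\bar C)$ (not just by "projection to the base point", which sounds like the other end), i.e. it is the Gysin/transfer map for the closed point. With that clarified, the proof is complete.
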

\begin{proof}
According to \Cref{num:recall-dual}, the motive $\M(C)$ is rigid, with dual given by $\M^c(C)(-1)[-2]$.
 The homotopy exact sequence of the motive with compact support
$$
\M^c(\{x_0\}) \rightarrow \M^c(\bar C) \rightarrow \M^c(C)
$$
splits, giving a canonical isomorphism $\M^c(C)=\coKer(\un \rightarrow \M(C))=\uA \oplus \un(1)[2]$.
 Thus, according to \eqref{eq:SV-curves}, one gets an isomorphism
$$
\un \oplus \uA^\vee \simeq \uA(-1)[-2] \oplus \un.
$$
One deduces the desired isomorphism as the factor $\un$ is eliminated by the projection
 $p:C \rightarrow \spec k$.
\end{proof}

\begin{rem}
Recall that $\un(1)[1]=\GG$. Therefore, the preceding formula reflects the duality of abelian varieties.
 Note in particular that, as $A$ is the Albanese variety of a pointed smooth projective curve,
 it admits a polarization which explains the preceding formula.
 For a general abelian variety $A$, one would expect $\uA^\vee=\underline{(A^*)}(-1)[-2]$
 where $A^*$ is the dual abelian variety.
 This formula was proved for \'etale motives (and consequently for $\DM(k,\QQ)$)
 by Kahn and Barbieri-Viale in \cite[4.5.3]{KBV}.
\end{rem}

\subsection{Higher dimension}\label{sec:surfaces}

Given the preliminary sections, the reader will not be surprised that generic motives
 are gigantic in general.
 We can at least illustrate this assertion by the following result which partially extends \Cref{prop:gmot_rational}.
\begin{prop}\label{prop:gmot-higher-trdeg}
Let $\varphi:K \rightarrow L$ be an extension of function fields over $k$ of transcendence degree $n>0$.
 For simplicity, we consider the pro-motive $T=\pprod{x \in \AA^1_{K,(0)}} \bM(\kappa_x)\tw 1$.
 
Fix a transcendental basis $(t_1,...,t_n)$ of $L/K$,
 and let $K_i=K(t_1,...,t_i) \subset L$. We consider the finite morphism
 $\psi:K_n \rightarrow L$ and for any $i \leq j$, we let $\varphi_{i,j}:K_i \rightarrow K_j$
 be the obvious inclusion.
 
 Then the following map is a split epimorphism:
$$
\bM(L) \xrightarrow{(\varphi^*,\pi_i^n \circ \psi^*)} \bM(K) \oplus \bigoplus_{i=1}^n T\tw i
$$
where the projector $\pi_i^n:\bM(K_n) \rightarrow T\{i\}$ is defined inductively using the formulas:
\begin{align*}
\pi_1^n&=\phi \circ \varphi_{1,n}^* \\
i>1: \pi_i^n&=\pi_{i-1}^{n-1} \circ \partial_n
\end{align*}
where $\phi:\bM(K_1) \rightarrow T\tw 1$ is the projection map from \Cref{prop:gmot-rational1},
 and $\partial_n:\bM(K_n) \rightarrow \bM(K_{n-1})\tw 1$ is the residue map
 arising from the valuation on $K_n$ associated with the variable $t_{n}$.
\end{prop}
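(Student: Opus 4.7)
My plan is to factor the map through $\bM(K_n)$, so as to reduce to the purely transcendental case $L = K_n$, and then to proceed by induction on $n$ using Proposition~\ref{prop:gmot-rational1}. Since $\varphi = \psi \circ \varphi_{0,n}$, the map in the statement factors as
$$
\bM(L) \xrightarrow{\psi^*} \bM(K_n) \xrightarrow{(\varphi_{0,n}^*,\pi_i^n)_i} \bM(K) \oplus \bigoplus_{i=1}^n T\tw i .
$$
Corollary~\ref{cor:inclusion-splits}, applied to the finite extension $L/K_n$, ensures that $\psi^*$ is a split epimorphism (after inverting the characteristic exponent~$e$, in line with the coefficient convention there). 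Since compositions of split epimorphisms are split, it suffices to establish the claim in the purely transcendental case $L = K_n$.

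\textbf{Induction on $n$.} The base case $n = 1$ is precisely Proposition~\ref{prop:gmot-rational1}: the map $(\varphi_{0,1}^*, \pi_1^1) = (\varphi_{0,1}^*, \phi)$ is the displayed isomorphism. For the inductive step, I would apply Proposition~\ref{prop:gmot-rational1} to the one-variable extension $K_{n-1} \subset K_n = K_{n-1}(t_n)$ and single out the closed point $t_n = 0 \in \AA^1_{K_{n-1}}$, whose residue field is $K_{n-1}$. This yields a direct sum decomposition
$$
\bM(K_n) \simeq \bM(K_{n-1}) \oplus \bM(K_{n-1})\tw 1 \oplus \mathcal{R}, \quad \mathcal{R} := \pprod{x \in \AA^1_{K_{n-1},(0)},\, x \neq 0} \bM(\kappa_x)\tw 1,
$$
under which $\varphi_{n-1,n}^*$ is the projection onto the first summand and $\partial_n$ (interpreted, consistently with the direction given in the statement, as the projection opposite to the residues of~(D4*)) is the projection onto the second.

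I would then inspect the restriction of $(\varphi_{0,n}^*, \pi_i^n)_i$ to each of the three source summands. On $\bM(K_{n-1})$, the components $\pi_i^n$ for $i \geq 2$ vanish (they factor through $\partial_n$), and $(\varphi_{0,n}^*, \pi_1^n)$ restricts to $(\varphi_{0,n-1}^*, \pi_1^{n-1})$; by the inductive hypothesis, together with the elementary fact that projecting a split epi onto a direct summand remains a split epi, this is a split epi onto $\bM(K) \oplus T\tw 1$. On $\bM(K_{n-1})\tw 1$, the components $\varphi_{0,n}^*$ and $\pi_1^n$ vanish (they factor through $\varphi_{n-1,n}^*$), whereas $\pi_i^n = \pi_{i-1}^{n-1} \circ \partial_n$ for $i \geq 2$ restricts to $\pi_{i-1}^{n-1}\tw 1$; by the inductive hypothesis twisted by $\tw 1$, these components assemble into a split epi onto $\bigoplus_{i=2}^n T\tw i$. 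On $\mathcal{R}$, every component vanishes. Since the three source summands thus map into disjoint target summands, the total map is block-diagonal with split epi blocks, and is itself a split epi.

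\textbf{Main obstacle.} The core technical content is the bookkeeping that matches the recursive definition $\pi_i^n = \pi_{i-1}^{n-1} \circ \partial_n$ with the iterated splitting of $\bM(K_n)$ and propagates the twists consistently through the induction. A secondary subtlety, elided in the statement, is the reduction step: integrally splitting $\psi^*$ requires inverting the degree $[L:K_n]$, so one must either invert $e$ (via Corollary~\ref{cor:inclusion-splits}) or make the coefficient convention explicit.
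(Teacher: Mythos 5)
Your proof is correct and follows essentially the same route as the paper's: reduce to the purely transcendental case $L=K_n$ via \Cref{cor:inclusion-splits}, then induct on $n$ by applying \Cref{prop:gmot-rational1} to $K_{n-1}\subset K_n$ at the point $t_n=0$. The paper presents the inductive step as a chain of split epimorphisms (passing through an intermediate $\bM(K_1)$ before invoking the $(n-1)$-case on the $\bM(K_{n-1})\tw 1$ factor, ``discarding the first factor''), whereas you reorganize it as a block-diagonal check on the three-summand decomposition of $\bM(K_n)$; this is a cleaner packaging of the same bookkeeping and, unlike the paper, makes explicit why the constructed epimorphism agrees with the stated one. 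Your two ``main obstacle'' remarks are apt: the map written $\partial_n$ in the statement is indeed the splitting $\phi_0$ of the residue map rather than a residue of type (D4*), and the reduction through $\psi^*$ does need the coefficient convention of \Cref{cor:inclusion-splits} — the paper handles the latter the same way, citing the degree formula.
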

\begin{proof}
Using \Cref{cor:inclusion-splits} (or the degree formula directly), $\psi^*$ is a split epimorphism,
 so that we are reduced to the purely transcendental case $L=K_n$.
 Then it follows by induction on $n>0$: the case $n=1$ is precisely \Cref{prop:gmot-rational1}.
 Then we get a composite epimorphism:
\begin{align*}
\bM(K_n) &\rightarrow \bM(K_{n-1}) \oplus \bM(K_{n-1})\tw 1 & \text{\Cref{prop:gmot-rational1} for $K_n/K_{n-1}$} \\
 &\rightarrow \bM(K_1) \oplus  \bM(K_{n-1})\tw 1 & \text{\Cref{cor:inclusion-splits}} \\
 &\rightarrow \bM(K) \oplus T\tw 1 \oplus \bM(K_{n-1})\tw 1 & \text{\Cref{prop:gmot-rational1}}
\end{align*}
and we conclude by induction using the $(n-1)$-case (up to discarding the first factor).
 The formulas for the projectors follow from the induction.
\end{proof}

\begin{rem}
The preceding proposition provides only a preliminary decomposition of the various pro-motives contained within a given generic motive.
Despite the analysis in the preceding section, the formula in \Cref{prop:gmot-rational1} is not sufficient to explicitly compute
 the structure of the motive of a purely transcendental extension of degree greater than one.
Nevertheless, it reveals that generic motives of transcendence degree $n$ typically form 
non-trivial extensions of lower-dimensional motives.
In particular, these motives are not mere direct sums or simple products of lower-dimensional components but involve intricate extension structures,
 reflecting the increasing complexity of their cohomological and motivic information.
\end{rem}

\begin{num}\textit{Surfaces}. We now assume that $k$ is algebraically closed.
 We will illustrate the preceding remark in transcendence degree $2$.
 Let $S$ be a geometrically connected smooth projective surface over $k$.
 We let $K$ be its function field.

Recall that Murre has shown in \cite{Murre1} the existence of a Chow-K\"unneth decomposition
 of the cohomological motive of $S$:
$$
\h(S)=\un \oplus \h^1(S) \oplus \h^2(S) \oplus \h^3(S) \oplus \un(-2)[-4],
$$
which we will consider in the homotopy category $\DM(k,\QQ)$.
Moreover, according to \cite{KMP},
 this decomposition can be refined: the component $\h^2(S)$ can be split into:
$$
\h^2(S)=\h^2_{alg}(S) \oplus \h^2_{tr}(S),
$$
representing the algebraic and transcendental parts of $\h^2$.
 Note that by definition, the algebraic part is of the form:
$\h^2_{alg}=\rho.\un(-1)[-2]$ where $\rho$ is the Picard number of the surface $S$. 
 Beware that in \emph{loc. cit.}, the motive $\h^2_{tr}(S)$ is denoted by $t_2(S)$.
 Recall that it is shown in Corollary 14.8.11 of \emph{loc. cit.}
 that the functor $S \mapsto \h^2_{tr}(S)$ is a birational invariant of $S$.\footnote{As recalled to me by
 Jan Nagel, this property can also be shown directly by using the blow-up formula
 for Chow motives combined with the fact any birational morphism of surfaces in
 characteristic $0$ can be written as a sequence of blow-ups of closed points.}
 In particular, it depends only on the function field $K$ of $S$. 

Dually, we obtain a decomposition of the homological motive in the homotopy category $\DM(k,\QQ)$:
$$
\M(S)=\un \oplus \M_1(S) \oplus \M_2^{alg}(S) \oplus \M_2^{tr}(S) \oplus \M_3(S) \oplus \un(2)[4].
$$
If $A$ denotes the Albanese scheme associated with $S$, we have an identification $\M_1(S)=\uA$,
 and by duality: $\M_3(S)=\uA^\vee(2)[4] \simeq \uA(1)[2]$ (apply \Cref{lm:dual-abvar}).
 Let us summarize the above discussion.
\end{num}
\begin{lm}
Consider the above notation.
 Then there exists a direct factor $\M_2^{tr}(S) \subset \M(S)$ in $\DMgm(k,\QQ)$
 called the \emph{transcendental part} of the homological motive, and a (refined) Chow-K\"unneth
 decomposition in $\DMgm(k,\QQ)$:
$$
\M(S)=\un \oplus \uA \oplus \rho.\un(1)[2] \oplus \M_2^{tr}(S) \oplus \uA(1)[2] \oplus \un(2)[4].
$$
\end{lm}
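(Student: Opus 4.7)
The plan is essentially to assemble the pieces already cited in the paragraphs preceding the lemma, dualizing Murre's cohomological Chow-K\"unneth decomposition and substituting the refinement of Kahn-Murre-Pedrini together with the duality identifications already established.

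First I would start from Murre's Chow-K\"unneth decomposition \cite{Murre1}
$$
\h(S)=\un \oplus \h^1(S) \oplus \h^2(S) \oplus \h^3(S) \oplus \un(-2)[-4]
$$
viewed in $\Ho\DM(k,\QQ)$ via the contravariant embedding of Chow motives recalled in \Cref{num:recall-dual}. Applying the Kahn-Murre-Pedrini refinement \cite{KMP} yields the further splitting $\h^2(S)=\h^2_{alg}(S)\oplus\h^2_{tr}(S)$, where by construction $\h^2_{alg}(S)\simeq b_1.\un(-1)[-2]$. This produces a six-term direct sum decomposition of $\h(S)$, and by definition the transcendental summand $\h^2_{tr}(S)$ is a genuine direct factor in $\DMgm(k,\QQ)$, which in turn singles out its dual as a direct factor of $\M(S)$.

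Next I would pass to the homological side. Because $\M(S)$ is rigid in $\DMgm(k,\QQ)$ with dual $\h(S)$ (\Cref{num:recall-dual}, since $S/k$ is smooth projective of dimension $2$, so $\h(S)\simeq \M(S)(-2)[-4]$), dualizing term by term converts the Chow-K\"unneth decomposition for $\h(S)$ into a direct sum decomposition
$$
\M(S)=\un \oplus \M_1(S)\oplus \M_2^{alg}(S) \oplus \M_2^{tr}(S) \oplus \M_3(S) \oplus \un(2)[4],
$$
where $\M_i(S):=\h^i(S)^\vee$ and where I define $\M_2^{tr}(S):=\h^2_{tr}(S)^\vee$ (a direct factor by the preceding paragraph). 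The dual of $\h^2_{alg}(S)=b_1.\un(-1)[-2]$ is $b_1.\un(1)[2]$, which takes care of the algebraic part.

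It then remains to identify $\M_1(S)$ and $\M_3(S)$. For $\M_1(S)$, the standard identification $\h^1(S)^\vee\simeq \uA$ with the Albanese scheme follows from the Suslin-Voevodsky type analysis already invoked for curves in \Cref{num:mot-curves} together with the defining property of the Albanese as representing degree $0$ $0$-cycles; this is exactly where Murre's $\pi_1$ projector lands. For $\M_3(S)$, I would invoke Poincar\'e duality together with the rigidity of $\uA$ established in \Cref{lm:dual-abvar}: dualizing $\M_1(S)\simeq \uA$ and twisting appropriately gives $\M_3(S)\simeq \uA^\vee(2)[4]\simeq \uA(-1)[-2](2)[4]=\uA(1)[2]$. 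Substituting these identifications into the displayed decomposition yields exactly the formula in the statement.

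The only mildly nontrivial point is the identification $\M_1(S)\simeq \uA$ on the nose (rather than up to an abstract isomorphism), but this is really a repetition of the curve argument of \Cref{num:mot-curves} applied via the Albanese morphism $S\to A$, and one can alternatively cite \cite{KMP} directly; the rest of the proof is a bookkeeping of dualities, which is why the statement is packaged here as a lemma summarizing the discussion.
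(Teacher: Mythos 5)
Your proof is correct and follows essentially the same route as the paper's own (implicit) argument: the lemma in the paper is stated as a summary of the discussion immediately preceding it, which consists precisely of Murre's Chow--K\"unneth decomposition, the Kahn--Murre--Pedrini refinement of $\h^2(S)$, dualization via $\h(S)\simeq \M(S)(-2)[-4]$, and the identifications $\M_1(S)\simeq\uA$ and $\M_3(S)\simeq\uA^\vee(2)[4]\simeq\uA(1)[2]$ using \Cref{lm:dual-abvar}. The one point worth flagging --- which applies to the paper's text as much as to your write-up --- is that \Cref{lm:dual-abvar} is stated for the Albanese of a pointed curve, while here $A$ is the Albanese of a surface; the subsequent remark in the paper explains that over $\QQ$ the formula $\uA^\vee\simeq\uA(-1)[-2]$ persists for any abelian variety (since $A$ is isogenous to its dual, or by the Kahn--Barbieri-Viale result), so the application is justified but deserves the explicit reference you might want to add.
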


\begin{num}
To go further, we will need to use the coniveau filtration viewed internally
 within pro-motives, as in \cite{Deg11} or \cite{PawD}.\footnote{We follow the notation
 of the latter which seems better suited.}
 We continue to consider a geometrically connected smooth projective surface $S$ over $k$,
 with function field $K$.

Recall that a flag in $S$ consists of a decreasing sequence $(Z^p)_{p \geq 0}$ of closed subschemes
 of $S$ such that $\codim_S(Z^p)\geq p$.
 The set $\Flag(S)$ of flags in $S$, ordered by term-wise inclusion, is filtered.
 We define an increasing filtration of $S$ by pro-schemes,
 the \emph{coniveau filtration}, by setting:
$$
S^{\leq p}:=\plim{Z^* \in \Flag(S)} \big(S-Z^{p+1}\big).
$$
By applying the functor $\M$ component-wise,
 we then get a canonical homotopy exact sequence in $\pro\iDMgm(k)$:
\begin{equation}\label{eq:coniv-exact-cpl}
\M(S^{\leq p-1}) \rightarrow \M(S^{\leq p}) \rightarrow \M(S^{=p})
\end{equation}
where the last term stands for the homotopy cofiber. Moreover, it follows
 from \cite[Lem. 1.14]{Deg11} that there exists a canonical isomorphism:
$$
\M(S^{=p}) \simeq \pprod{x \in S^{(p)}} \bM(\kappa_x)(p)[2p].
$$
We can then extend the computation of \Cref{prop:gmot_tr1}
 in the transcendence 2 case as follows.
\end{num}
\begin{prop}\label{prop:gen-mot-surf}
Consider the above notation.
 Then there exists homotopy exact sequences in the stable $\infty$-category $\pro\iDMgm(k,\QQ)$:
\begin{gather*}
\pprod{x \in S^{(1)}} \bM(\kappa_x)\tw 1 \xrightarrow{\prod_x \partial_x} \bM(K) \rightarrow \bM(S^{\leq 1}) \\
\pprod{s \in S^{(2)}} \bM(\kappa_s)\tw 2 \rightarrow \bM(S^{\leq 1}) \rightarrow \un \oplus \uA \oplus \rho.\un(1)[2] \oplus \M_2^{tr}(S) \oplus \uA(1)[2] \oplus \un(2)[4]
\end{gather*}
where the $\partial_x$ is the residue map (see \Cref{num:D3D4}(D4*))
 associated with the valuation on $K$ corresponding to the divisor with generic point $x \in S^{(1)}$.
\end{prop}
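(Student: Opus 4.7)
The plan is to derive both sequences from the coniveau triangle \eqref{eq:coniv-exact-cpl}, specialized to the two values $p=1$ and $p=2$, and to identify the endpoints using the cofiber formula for $\M(S^{=p})$ recalled just before the statement, together with the refined Chow--Künneth decomposition of the previous lemma. The starting point is the observation that, because $\dim S = 2$, the coniveau pro-schemes collapse at both extremities: directly from the definition, $S^{\leq 0} = \plim{Z^1}(S - Z^1) = \Spec K$ is the generic point, while $S^{\leq 2} = S$ as there are no codimension $\geq 3$ closed subschemes. Hence $\M(S^{\leq 0}) = \bM(K)$ and $\M(S^{\leq 2}) = \M(S)$.

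For the first exact sequence, I would apply \eqref{eq:coniv-exact-cpl} with $p=1$ to obtain a distinguished triangle in $\pro\DMgm(k)$
$$
\bM(K) \longrightarrow \bM(S^{\leq 1}) \longrightarrow \pprod{x \in S^{(1)}} \bM(\kappa_x)(1)[2] \longrightarrow \bM(K)[1],
$$
where the cofiber is identified via the stated formula for $\M(S^{=1})$. After rotation and using $\tw 1 = (1)[1]$ this becomes the announced sequence. Because the coniveau tower is assembled as a pro-limit of Gysin triangles over the filtered set of flags, the boundary map is factor-wise the residue $\partial_x$ of \ref{num:D3D4}(D4*) attached to the divisor with generic point $x$; this is precisely the argument used to prove \Cref{prop:gmot_tr1} in the curve case, applied one filtration step further up.

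For the second exact sequence, I would apply \eqref{eq:coniv-exact-cpl} with $p=2$ to obtain, after the analogous rotation,
$$
\pprod{s \in S^{(2)}} \bM(\kappa_s)(2)[3] \longrightarrow \bM(S^{\leq 1}) \longrightarrow \M(S),
$$
and then substitute the refined Chow--Künneth decomposition of the preceding lemma in place of $\M(S)$. This produces the claimed exact sequence; the identification is to be read in $\pro\DMgm(k,\QQ)$, because the transcendental summand $\M_2^{tr}(S)$ is only defined after rationalisation (and the splitting into the six pieces lives in $\Chow(k)_\QQ$).

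The main obstacle I foresee is the bookkeeping at the pro-level: one needs to check that, over the filtered category $\Flag(S)$, the Gysin boundaries assemble coherently into the announced pro-map, and that its factor corresponding to $x \in S^{(1)}$ coincides termwise with the residue map of \ref{num:D3D4}(D4*). This is exactly what the formalism of the coniveau filtration developed in \cite{Deg11} and \cite{PawD} is designed to deliver, so it is imported rather than re-proved. A minor but important secondary point is the shift/twist bookkeeping, since with the convention $\tw n = (n)[n]$ the rotation of the codimension-$p$ Gysin triangle naturally reads as a twist $(p)[2p-1]$ on the left.
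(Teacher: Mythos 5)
Your approach matches the paper's exactly: the paper's proof is a one-liner that invokes \eqref{eq:coniv-exact-cpl} for $p=1,2$, the cofiber formula $\M(S^{=p}) \simeq \pprod{x \in S^{(p)}} \bM(\kappa_x)(p)[2p]$, the observations $\M(S^{\leq 0}) = \bM(K)$, $\M(S^{\leq 2}) = \M(S)$, and the refined Chow--K\"unneth lemma. You spell all of this out correctly, including the rotation of the distinguished triangle and the pro-level identification of the boundary with the residue maps, the last of which the paper leaves implicit.

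One point deserves to be stated more forcefully than as a ``minor secondary point.'' Your careful bookkeeping gives the left-hand term of the second sequence as $\pprod{s \in S^{(2)}} \bM(\kappa_s)(2)[3]$, which is what the rotation of $\bM(S^{\leq 1}) \to \M(S) \to \M(S^{=2})$ actually produces (since $\M(S^{=2})[-1]$ has twist $(2)[2\cdot 2 - 1] = (2)[3]$). But the statement you are proving writes $\bM(\kappa_s)\tw 2 = \bM(\kappa_s)(2)[2]$, which is off by a shift of $[1]$. So your argument does \emph{not} ``produce the claimed exact sequence'' verbatim; it produces the corrected one. You should say explicitly that the twist in the second exact sequence of \Cref{prop:gen-mot-surf} should read $(2)[3]$ (equivalently $\tw 2[1]$), not $\tw 2$ --- the $p=1$ case is a coincidence because $\tw 1 = (1)[1] = (1)[2\cdot 1 - 1]$, so the pattern does not persist for $p=2$. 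Apart from that, the argument is sound and faithful to the paper's intended route.
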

In particular, the motive of the transcendental degree $2$ function field $K/k$ can be computed in terms
 of two successive extensions
 of generic motives associated with fields of lower transcendental degrees on the one hand,
 and of pure motives, including the transcendental motive of $S$, on the other hand.
\begin{proof}
This follows immediately from the exact sequence \eqref{eq:coniv-exact-cpl} and the preceding discussion,
 once one notices that $\M(S^{\leq 2})=\M(S)$ and $\M(S^{\leq 0})=\M(S^{=0})=\bM(K)$.
\end{proof} 
\section{Motivic cohomology of fields}\label{sec:cohm-fields-main}

\begin{num}
The structure of the generic motive of a function field $K/k$ is reflected in motivic cohomology,
 as follows from the formula:
\begin{equation}\label{eq:cohm-gmot}
\Hmot^{n,i}(K)=\Hom_{\pro\iDM(k)}(\bM(K),\un(i)[n]).
\end{equation}
(Apply the continuity property \eqref{eq:continuity}.)
 We will illustrate this fact in this section.

The latter group has several explicit presentations. For the non-trivial case $i>0$, we consider the non-positively graded homological complex
 made of free abelian groups
$$
Z_0(\GG^{\wedge,i})=c_K(\Delta^0,\GG^{\wedge,i}) \xrightarrow{\ d_0\ } \cdots \xrightarrow{d_{n-1}} c_K(\Delta^n,\GG^{\wedge,i}) \xrightarrow{d_n} c_K(\Delta^{n+1},\GG^{\wedge,i}) \rightarrow \hdots
$$
where all schemes and products are taken implicitly over the field $K$,
 $\GG^{\wedge,i}$ is the $i$-th smash product of the pointed scheme $(\GG,1)$ (see also \Cref{num:cohm-integral} for this notation).
 Thus the group $c_K(\Delta^n,\GG^{\wedge,i})$ is the free abelian group generated
 by integral closed subschemes $Z \subset \Delta^n \times \GG^{\wedge,i}$ which are finite dominant over $\Delta^n_K$ and which is not contained
 in a closed subscheme of the form $\Delta^n \times (\GG^{\wedge,j} \times \{1\} \times \GG^{\wedge,i-j-1})$.
 The differentials are given by the alternating sum of the operation of taking intersection along faces $\Delta^{n-1}_i \times \GG^{\wedge,n}$,
 operations which are always well-defined and can be computed using Serre's Tor formula.

Thus, the motivic cohomology of the field $K$ can be expressed as follows
\begin{equation}\label{eq:cohm-presentation}
\Hmot^{n,i}=\Ker(d_{i-n})/\Img(d_{i-n-1}).
\end{equation}
As both abelian groups $\Ker(d_{i-n})$ and $\Img(d_{i-n-1})$ are free,
 this is therefore an \emph{explicit presentation} of motivic cohomology.
 While this presentation may not be practical for explicit computations,
 it provides a bound for the size of motivic cohomology of fields
 (see \Cref{prop:cohm-uncount} for more).\footnote{One gets a similar presentation by considering Bloch's higher Chow groups.}
\end{num}

\subsection{Borel's computations and $\lambda$-structures}\label{sec:Borel}

In this subsection we recall the following fundamental computation of motivic cohomology. This is
 mainly due to Borel if one neglects weights, which were determined by Beilinson using his
 equally fundamental theory of (higher) regulators. Beilinson's main argument was explained and corrected
 in detail by Burgos in \cite{Burgos}. We use the notation of \Cref{sec:rat-mot-coh} for rational motivic cohomology.
\begin{thm}\label{thm:BorelB}
Let $K$ be a number field, and let $r_1$ and $r_2$ denote the number of real and complex embeddings of $K$, respectively.
 Then for any pair of integers $(n,i)$, one has:
$$
\HB^{n,i}(K)=\begin{cases}
\QQ & n=i=0, \\
K^* \otimes_\ZZ \QQ & n=i=1, \\
\QQ^{r_2} & n=1, i>1, i \text{ even}, \\
\QQ^{r_1+r_2} & n=1, i>1, i \text{ odd}, \\
0 & \text{otherwise.}
\end{cases}
$$
If $\cO_K$ is the ring of integers of $K$, one gets:
$$
\HB^{n,i}(\cO_K)=\begin{cases}
\QQ & n=i=0, \\
\cO_K^* \otimes_\ZZ \QQ & n=i=1, \\
\QQ^{r_2} & n=1, i>1, i \text{ even}, \\
\QQ^{r_1+r_2} & n=1, i>1, i \text{ odd}, \\
\Pic(\cO_K)_\QQ & n=2, i=1, \\
0 & \text{otherwise.}
\end{cases}
$$
\end{thm}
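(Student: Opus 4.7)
The plan is to reduce the theorem to Borel's rank computation in algebraic $K$-theory, and then to identify the weights under the $\gamma$-filtration by tracing Borel's construction at every step together with the action of $\lambda$-operations (equivalently, Adams operations).

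First, I would apply Soulé's comparison isomorphism \eqref{eq:rat-Hmot&Kth}: for a regular scheme $X$,
$$
\HB^{n,i}(X) \simeq K_{2i-n}^{(i)}(X),
$$
the weight $i$ eigenspace in $K_{2i-n}(X)_\QQ$ under Adams operations. Applied to $X=\Spec(\cO_K)$, this gives $\HB^{n,i}(\cO_K)=0$ whenever $2i-n$ is a positive even integer, because Borel's vanishing $K_{2m}(\cO_K)_\QQ = 0$ for $m\geq 1$ then suffices. The remaining potentially non-zero groups are $\HB^{0,0}$, $\HB^{1,i}$ for $i\geq 1$, and $\HB^{2,1}(\cO_K)$, for which one reads off directly $K_0(\cO_K)_\QQ=\QQ$, $K_1(\cO_K)_\QQ \simeq \cO_K^*\otimes \QQ$ (weight $1$ by multiplicativity of the first Chern class), and $\Pic(\cO_K)_\QQ \subset K_0(\cO_K)_\QQ$ in weight $1$ giving $\HB^{2,1}(\cO_K)$.

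The heart of the argument is to show that for $i \geq 2$, the whole group $K_{2i-1}(\cO_K)_\QQ$ (of rank $r_1+r_2$ if $i$ is odd, $r_2$ if $i$ is even, by Borel) lies in the single graded piece of weight $i$. The strategy is to follow Borel's construction of non-trivial classes in $K_{2i-1}(\cO_K)\otimes \RR$ through its target—the stable cohomology of arithmetic groups, identified via continuous cohomology with primitive classes on the symmetric space of $SL_N(\cO_K\otimes \RR)$—and check at each intermediate step that Adams operations act. On the target, $\psi^k$ can be analyzed explicitly via exterior powers of the standard representation of $SL_N$: the primitive $(2i-1)$-classes are eigenvectors with eigenvalue $k^i$, precisely what is required for weight $i$. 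The compatibility of Borel's regulator with Adams operations (the non-trivial input, for which the author acknowledges O.~Taïbi) then forces every Borel class to sit in weight $i$, and the ranks of $\HB^{1,i}(\cO_K)$ follow.

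Finally, I would pass from $\cO_K$ to $K$ using the localization sequence in K-theory,
$$
\bigoplus_{v} K_{n-1}(\kappa_v)_\QQ \rightarrow K_n(\cO_K)_\QQ \rightarrow K_n(K)_\QQ \rightarrow \bigoplus_{v} K_{n-2}(\kappa_v)_\QQ,
$$
where $v$ runs over closed points of $\Spec(\cO_K)$ and $\kappa_v = \FF_{q_v}$. By Quillen's computation, $K_*(\FF_q)_\QQ = 0$ in positive degrees, so the outer terms vanish for $n\geq 2$, yielding $K_n(\cO_K)_\QQ \xrightarrow{\sim} K_n(K)_\QQ$; this isomorphism respects the $\gamma$-filtration by functoriality, so the weight computation transfers. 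For low degrees one has $K_0(K)=\ZZ$ and $K_1(K)_\QQ = K^*\otimes \QQ$ directly, and the $\Pic(\cO_K)_\QQ$ summand is killed when passing to the generic point by the divisor map, so that $\HB^{2,1}(K)=0$, recovering the stated table for $K$. The main obstacle throughout is clearly the weight identification in step three: everything else is essentially organizational once Borel's ranks and Soulé's formula are granted, but pinning down the compatibility of Borel's regulator with the $\lambda$-structure on both source and target is where the real work lies.
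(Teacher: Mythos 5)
Your proposal is correct and follows essentially the same route as the paper: reduce to Borel's rank computation via Soulé's comparison, pass between $\cO_K$ and $K$ using the localization/coniveau sequence with Quillen's vanishing over finite fields, and then determine the $\gamma$-weight by tracking the $\lambda$-structure (coming from exterior powers of matrices) across each isomorphism in Borel's chain down to the primitive classes on the compact dual symmetric space. The paper carries out the weight-tracking through the explicit Hurewicz/Milnor--Moore step, the Cartan-dual compact form, and the characteristic-class bookkeeping ($c_i$ and $p_j$ classes with $\iota^*$ identifying $p_j$ in the image of $c_{2j}$), which is exactly the computation your ``eigenvalue $k^i$ on primitive $(2i-1)$-classes'' summary packages.
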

We will give a proof of this theorem, which is essentially a reduction to Borel's computation
 of the stable real cohomology of arithmetic groups (\cite{Borel}).
 For clarity, we first recall the arguments in low cohomological degrees,
 which are now well-established results.

The computations for $i \leq 1$ follow from \eqref{eq:cohm1}.
 The same holds for the vanishing $\HB^{n,i}(K)$
 (resp. $\HB^{n,i}(\cO_K)$) for $n-i>0$ (resp. $n-i>1$).
  We can consider the coniveau spectral sequences for the $1$-dimensional regular scheme $X=\Spec(\cO_K)$
 with coefficients in $n$-th twisted rational motivic cohomology and rational K-theory:\footnote{The
 second one is a direct sum of copies of the first one, but it is convenient
 to use both forms.}
\begin{align*}
E_{1,\mathcyr B}^{p,q}=\oplus_{x \in X^{(p)}} \HB^{q-p,n-p}(\kappa_x) &\Rightarrow \HB^{p+q,n}(\cO_K), \\
E_{1,K}^{p,q}=\oplus_{x \in X^{(p)}} K_{-p-q}(\kappa_x)_\QQ &\Rightarrow K_{-p-q}(\cO_K).
\end{align*}
According to Quillen's computation of the K-theory of a finite field $\FF_q$ (see \cite{QuiFF}),
 one gets:
\begin{align*}
\HB^{n,i}(\FF_q)=0 & \Leftrightarrow (n,i) \neq (0,0), \\
K_n(\FF_q)_\QQ=0 & \Leftrightarrow n \neq 0.
\end{align*}
Therefore, the above spectral sequence implies:
\begin{align*}
\HB^{n,i}(\cO_K) \xrightarrow \sim \HB^{n,i}(K) & \text{ if } i\geq 2, \\
K_n(\cO_K)_\QQ \rightarrow K_n(K)_\QQ & \text{ is } 
\begin{cases}
mono  & n=1 \\
iso & n \geq 2.
\end{cases}
\end{align*}
Note in particular that $\HB^{3,2}(\cO_K)$ must be zero according to the first isomorphism.
 As a summary, we have obtained the lower $K$-groups as follows:
\begin{align*}
K_0(K)_\QQ&=\HB^{0,0}(K)=\QQ \\
  K_0(\cO_K)_\QQ&=\HB^{0,0}(\cO_K) \oplus \HB^{2,1}(\cO_K)=\QQ \oplus \Pic(\cO_K)_\QQ \\
K_1(K)_\QQ&=\HB^{1,1}(K)=K^\times \otimes_\ZZ \QQ \\
  K_1(\cO_K)_\QQ&=\HB^{1,1}(\cO_K)=\cO_K^\times \otimes_\ZZ \QQ
\end{align*}
Thus, it remains to compute $K_n(\cO_K)_\QQ$ for $n \geq 2$,
 and to determine its graduation for the $\gamma$-filtration to get
 the above calculation.
 As already mentioned, the first task was achieved by Borel
 in his famous paper \cite{Borel}. The second task follows from the comparison of the Borel
 and the Beilinson regulators, as detailed in \cite{Burgos}
 (see also \cite{Rapop} for an earlier discussion).
 In the remainder of this section, we will explicitly recall the computations of Borel
 and show how to give a direct argument for the determination of the $\gamma$-filtration.

\begin{num}\textit{Rational K-theory and homology of groups}.\footnote{These
 facts are well-known, but we recall them for completeness,
 and for introducing our notation.}
Given any commutative ring $A$ and an integer $n>1$, one gets the following
 classical computation:
$$
K_n(A) \simeq \pi_n\big(\BGL(A)^+\big) \simeq \pi_n\big(\BSL(A)^+\big)
$$
where $\GL(A)$ (resp. $\SL(A)$) is the infinite general (resp. special)
 linear group associated with $A$, and $X^+$ is Quillen
 $+$-construction applied to a space $X$.\footnote{See \Cref{ex:presentable-htp}
 for a quick recall in terms of $\infty$-categories.}
The first isomorphism is Quillen's comparison of the plus and Q constructions
 (\cite{GraysonPQ}, \cite{Godfard} with more details).
 To get the second isomorphism, one considers the obvious exact sequence of
 groups:
$$
1 \rightarrow \SL(A) \rightarrow \GL(A)
 \xrightarrow{\det} A^\times \rightarrow 1.
$$
After applying the classifying space construction,
 followed by the plus construction, 
 one gets a fibration sequence in the homotopy category:\footnote{Note
 that the $+$-construction respects fiber sequences,
 or equivalently homotopy fibers, as it is a localization functor.}
$$
\BSL(A)^+ \rightarrow \BGL(A)^+ \rightarrow \mathrm B(A^\times)^+.
$$
Note that $B(A^\times)^+=B(A^\times)$ since $A^\times$ is an abelian group.
 Thus the associated exact sequence of homotopy groups allows us to conclude.

Next we recall that $\BSL(A)^+$ is an H-space.\footnote{In fact,
 the ``+=Q'' theorem of Quillen shows that it is a loop space.}
 Consequently, the Milnor-Moore/Cartan-Serre theorem implies that the Hurewicz
 map
$$
\pi_n(\BSL(A)^+) \rightarrow H_n^{\text{sing}}(\BSL(A),\ZZ)
$$
induces an isomorphism after tensoring with $\QQ$
\begin{equation}\label{eq:rational_Kth&hlg}
K_n(A)_\QQ \xrightarrow \sim \Prim_n H_*(\SL(A),\QQ)
\end{equation}
where $\Prim_n$ denotes the primitive part in degree $n$ of the Hopf algebra:
$$
H_*(SL(A),\QQ)=H_*^{\text{sing}}(\BSL(A),\QQ)\simeq H_*^{\text{sing}}(\BSL(A)^+,\QQ).
$$
\end{num}

\begin{num}
We next fix an integer $r>0$ and consider the arithmetic group $\Gamma_r=\SL_r(\cO_K)$,
 which can be seen as a subgroup of the real Lie group\footnote{The latter can also be seen as the real points
 of the algebraic $\QQ$-group scheme obtained by the Weil restriction of scalars of the $K$-group scheme $\SL_{r,K}$.}
$$
G_r=\SL_r(K \otimes_\QQ \RR)=\SL_r(\RR)^{r_1} \times \SL_r(\CC)^{r_2}.
$$
We denote by $\nu_r:\Gamma_r \rightarrow G_r$ the natural inclusion, where $\Gamma_r$ is seen with the discrete topology.
 Viewing $\RR$ as a discrete module over $G_r$ (resp. $\Gamma_r$),
 one can define the continuous cohomology $H^*_{ct}(-,\RR)$
 with coefficient in $\RR$ (see e.g. \cite[IX, \textsection 1]{BW}).
 As an application of the main result of \cite{Borel}, one gets the following fundamental computation.
\end{num}
\begin{thm}\label{thm:Borel}
The restriction map along $\nu_r$ in continuous cohomology induces an isomorphism for any $n \leq r/4$:
$$
H^n_{ct}(G_r,\RR) \xrightarrow{\nu_r^*} H^n_{ct}(\Gamma_r,\RR)=H^n(\Gamma_r,\RR).
$$
\end{thm}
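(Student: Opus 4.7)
The plan is to follow Borel's original strategy: reinterpret both sides of the map $\nu_r^*$ as cohomology of the associated symmetric space, and then upgrade $\Gamma_r$-invariance to $G_r$-invariance in a stable range via a harmonic forms argument.

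Let $K_r \subset G_r$ be a maximal compact subgroup and set $X_r = G_r/K_r$, the associated Riemannian symmetric space of non-compact type, which is contractible. By van Est's theorem one has
$$
H^*_{ct}(G_r,\RR) \simeq H^*(\mathfrak g_r, \mathfrak k_r;\RR) \simeq \Omega^*(X_r)^{G_r},
$$
the complex of $G_r$-invariant differential forms on $X_r$ (which is closed with zero differential, by the symmetric space structure). On the other hand, after replacing $\Gamma_r$ by a torsion-free finite index subgroup (without affecting real cohomology), one has $H^*(\Gamma_r,\RR) \simeq H^*_{dR}(\Gamma_r \backslash X_r, \RR)$, and $\nu_r^*$ is identified with the natural map that views a $G_r$-invariant form on $X_r$ as a $\Gamma_r$-invariant form descending to $\Gamma_r \backslash X_r$.

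The heart of the argument is to show this descent map is an isomorphism for $n \leq r/4$. The first ingredient is reduction theory: using the Borel--Serre compactification of $\Gamma_r \backslash X_r$ to control cohomology near the cusps, one shows that every class in low degree is represented by a square-integrable harmonic form. Next, Matsushima's formula decomposes the $L^2$-cohomology as a direct sum, indexed by irreducible unitary representations $\pi$ occurring in $L^2(\Gamma_r \backslash G_r)$, of the relative Lie algebra cohomology groups $H^*(\mathfrak g_r, \mathfrak k_r; \pi)$. It remains to prove that for $\pi$ non-trivial these contributions vanish in degree $\leq r/4$, so that only $\pi = \mathbf 1$ survives, contributing exactly $\Omega^*(X_r)^{G_r}$.

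The main obstacle is this last vanishing step, which requires a detailed analysis of the unitary dual of the product $G_r = \SL_r(\RR)^{r_1} \times \SL_r(\CC)^{r_2}$. One reduces via a K\"unneth argument to each simple factor, and then, for each of the symmetric spaces $\SL_r(\RR)/\SO_r$ and $\SL_r(\CC)/\SU_r$, one must show that every non-trivial irreducible unitary representation has vanishing $(\mathfrak g, \mathfrak k)$-cohomology up to a degree essentially linear in $r$. Borel's approach for this relies on Kuga's formula relating the Laplacian on differential forms to the Casimir operator, together with explicit spectral estimates; modern treatments use the Vogan--Zuckerman classification of cohomological representations. The precise constant $1/4$ then arises by combining injectivity and surjectivity statements across the two types of factors, with each simple factor contributing only half of its individual stable range.
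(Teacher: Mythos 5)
Your reduction step matches the paper's: both identify $H^*_{ct}(G_r,\RR)$ with $G_r$-invariant forms $\Omega^*(X_r)^{G_r}$ on the symmetric space, identify $H^*(\Gamma_r,\RR)$ with $\Gamma_r$-invariant forms (you by passing to a torsion-free finite-index subgroup via Selberg's lemma, the paper by the de~Rham theorem for the orbifold $X_r/\Gamma_r$, with your route given in a footnote as an alternative), and recognize $\nu_r^*$ as the Matsushima inclusion $j_{\Gamma_r}\colon \Omega^*(X_r)^{G_r}\hookrightarrow\Omega^*(X_r)^{\Gamma_r}$. Where the two diverge is in what comes next: the paper stops at this point and explicitly declines to reprove Borel's stability theorem, citing \cite{Borel} Th.~7.5 together with \textsection 9.5, formula (3) for the conclusion that $j_{\Gamma_r}$ is a quasi-isomorphism in degree $\leq r/4$. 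You instead attempt to recapitulate the interior of Borel's argument (Borel--Serre compactification, square-integrable harmonic representatives, Matsushima decomposition, vanishing of $(\mathfrak g,\mathfrak k)$-cohomology for non-trivial unitary $\pi$). That is a legitimate route, but two caveats: (i) your explanation of where the constant $1/4$ comes from (``each simple factor contributing only half of its individual stable range'') is not an accurate account of Borel's bound, which arises from estimating the constants $c(G_r)$, $m(G_r)$ controlling injectivity and surjectivity of $j_{\Gamma_r}$ separately for each factor and is recorded in \textsection 9.5 of \cite{Borel}; and (ii) invoking Vogan--Zuckerman is anachronistic for the 1974 argument the paper is actually leaning on, and would need to be replaced by Borel's own spectral estimates via Kuga's formula if one really wanted a self-contained proof in the spirit of \cite{Borel}. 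Since the paper's goal is only a careful identification of $\nu_r^*$ with $j_{\Gamma_r}$ so that Borel's theorem can be applied verbatim, your extra material is strictly more than what is needed, and the imprecisions in it are exactly the sort of thing the paper is avoiding by citing rather than reproving.
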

\begin{proof}
As the proof is very involved, we indicate how to deduce the proof of the above theorem
 from \cite{Borel}, Theorem 7.5.

One considers the maximal compact open subgroup $K_r=\SO_r^{r_1} \times \SU_r^{r_2}$
 of $G_r$ and the associated symmetric space $X_r=K_r\backslash G_r$.
 The group $\Gamma_r$ acts properly discontinuously and almost freely on $X_r$ so that
 the topological quotient space $O_r=X_r/\Gamma_r$ is an orbifold (see e.g. \cite[Chap. 13]{Thurs}).
Moreover, almost by construction of an orbifold, the natural map
 $\B \Gamma_r:=X_r\sslash \Gamma_r \rightarrow O_r$ between the homotopy quotient to the 
 orbifold quotient is a $\QQ$-equivalence, in the sense of \Cref{ex:presentable-htp}.\footnote{In the original argument
 given by Borel, one applies Selberg's lemma (see \cite{Alp}) to get a finite index normal torsion-free subgroup
 $\Gamma'_r \subset \Gamma _r$.
 Then $\Gamma'_r$ acts freely on, $X_r$ so that $X_r/\Gamma'_r$ is a manifold and a model for $\B \Gamma'_r$.
 But now the canonical map $X_r/\Gamma'_r \rightarrow X_r/\Gamma_r=O_r$ is a finite branched cover, and therefore a $\QQ$-equivalence.
 Moreover, the canonical map $B\Gamma'_r \rightarrow B\Gamma_r$ is also a $\QQ$-equivalence
 (see e.g. \cite[Chap. III.7]{Brown}). One deduces by composition a natural $\QQ$-equivalence $B\Gamma_r \simeq O_r$.} \\
 The de Rham isomorphism theorem for orbifolds (see \cite{Satake}) implies that one can compute the singular cohomology of $O_r$ with real coefficients,
 and therefore that of $\B \Gamma_r$,
 as the cohomology of the differential graded $\RR$-algebra $\Omega^*(O_r) \simeq \Omega^*(X_r)^{\Gamma_r}$.
 On the other hand, according to the topological version of the Poincaré lemma, the continuous cohomology of $G_r$ 
 can be identified with the cohomology of the $\RR$-dga $\Omega^*(X_r)^{G_r}$. The Matsushima map considered
 in \cite[\textsection 3.1]{Borel} is nothing else than the inclusion map of real differential graded algebras
$$
j_{\Gamma_r}:\Omega^*(X_r)^{G_r} \rightarrow \Omega^*(X_r)^{\Gamma_r}.
$$
The latter is therefore a quasi-isomorphism in degree less than $r/4$ according to the conjunction of Th. 7.5
 and \textsection 9.5, formula (3) of \cite{Borel}.
 Finally, it is straightforward to verify that $j_{\Gamma_r}$ models the restriction map $\nu_r^*$ on continuous cohomology.
\end{proof}

\begin{cor}
Consider the inclusion of topological groups
$$
\nu:\SL(\cO_K) \rightarrow \SL(K \otimes_\QQ \RR).
$$
Then the pullback along $\nu$ induces an isomorphism:
$$
H^*_{ct}\big(\SL(K \otimes_\QQ \RR),\RR\big) \xrightarrow{\nu^*} H^*(\SL(\cO_K),\RR)
$$
of graded $\RR$-algebras, whose graded pieces are all finite dimensional over $\RR$.
\end{cor}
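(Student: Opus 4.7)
The plan is to deduce this stable statement from the finite-rank \Cref{thm:Borel} by passing to the colimit in the rank $r$. First, I would write $\SL(\cO_K) = \colim_r \Gamma_r$ and $\SL(K \otimes_\QQ \RR) = \colim_r G_r$ with respect to the standard block-diagonal inclusions; the map $\nu$ is then the colimit of the maps $\nu_r$, so that at the level of classifying spaces $\B \SL(\cO_K) = \hocolim_r \B \Gamma_r$.

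Second, fix a cohomological degree $n$ and choose $r \geq 4n$, so that \Cref{thm:Borel} applies and $\nu_r^* : H^n_{ct}(G_r,\RR) \xrightarrow\sim H^n(\Gamma_r,\RR)$ is an isomorphism. In this range both groups are already finite-dimensional over $\RR$: the left-hand side is computed by the finite-dimensional algebra of $G_r$-invariant forms on the symmetric space $X_r$ (as used in the proof of \Cref{thm:Borel}), and the isomorphism transports finiteness to the discrete side. One then needs stability: the obvious restriction maps
\[
H^n_{ct}(G_{r+1},\RR) \to H^n_{ct}(G_r,\RR), \qquad H^n(\Gamma_{r+1},\RR) \to H^n(\Gamma_r,\RR),
\]
are isomorphisms for $r \geq 4n$. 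On the continuous cohomology side, this is the classical stability of the cohomology of the compact dual symmetric space (Borel's universal generators, in bounded degree); on the discrete side, it then follows formally from \Cref{thm:Borel} applied simultaneously to $r$ and $r+1$.

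Third, I would assemble the pieces. Because the system $(H^{n-1}(\Gamma_r,\RR))_r$ consists of finite-dimensional $\RR$-vector spaces and is eventually constant, it is Mittag-Leffler and the Milnor short exact sequence for $\hocolim_r \B \Gamma_r$ collapses to an isomorphism $H^n(\SL(\cO_K),\RR) \xrightarrow\sim \lim_r H^n(\Gamma_r,\RR)$, the limit being stationary equal to its value at any $r \geq 4n$. The analogous stabilization on the continuous side identifies $H^n_{ct}(\SL(K \otimes_\QQ \RR),\RR)$ with $H^n_{ct}(G_r,\RR)$ for the same $r$. The asserted isomorphism $\nu^*$ is then \Cref{thm:Borel} in the chosen degree, compatibility with cup product coming for free since the stabilizing block-diagonal inclusions are multiplicative via external products, and finite-dimensionality of each graded piece being inherited from the finite-rank approximations.

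The main obstacle I expect is the continuous cohomology side, where $\SL(K \otimes_\QQ \RR) = \colim_r G_r$ is not locally compact and the standard definitions of $H^*_{ct}$ behave awkwardly under such colimits. The cleanest remedy is probably to bypass the general definition entirely and compute both sides via invariant differential forms on the stable symmetric space $X_\infty = \colim_r X_r$, relying on the de Rham/Matsushima model used in the proof of \Cref{thm:Borel} to make the comparison with $\Omega^*(X_\infty)^{\Gamma}$ transparent.
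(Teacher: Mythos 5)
Your argument is correct and is essentially the paper's own proof: the paper likewise invokes the van Est/compact-dual identification to get finite dimensionality and $r$-stability of $H^n_{ct}(G_r,\RR)$ in the Borel range $n \leq r/4$, deduces the Mittag--Leffler condition on both sides of $\nu_r^*$, and identifies $\nu^*$ with the projective limit of the stabilized isomorphisms $\nu_r^*$ from \Cref{thm:Borel}. Your worry about what $H^*_{ct}$ of the ind-Lie group $\SL(K\otimes_\QQ\RR)$ should mean is legitimate; the paper tacitly takes the projective limit of the $H^n_{ct}(G_r,\RR)$ as the definition, which is equivalently computed by invariant forms on the stable symmetric space exactly as you propose, so the obstacle dissolves.
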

Indeed, for any $n \leq r/4$, $H^n_{ct}(G_r,\RR)$ has finite dimension over $\RR$,
 and does not depend on $r$.\footnote{This follows from the van Est isomorphism, which allows to identify
 the latter cohomology groups with the singular cohomology group $H^n_{\text{sing}}(G_r^u,\RR)$
 of the compact real form $G_r^u$ associated with the semi-simple real Lie group $G_r$.
 See below.}
 It follows that both the source and the target of $\nu_r^*$ satisfy the Mittag-Leffler
 condition, implying that $\nu^*$ can be identified with the projective limit
 of the $\nu_r^*$. Thus the preceding theorem allows us to conclude.

\begin{num}
Recall that according to the universal coefficient theorem, one gets a canonical isomorphism:
$$
H^n(\SL(\cO_K),\RR)=H_n(\SL(\cO_K),\RR)^\vee
$$
where $\vee$ denotes the dual of $\RR$-vector spaces.
 Since this $\RR$-vector space is finite dimensional according to the preceding corollary,
 one deduces a canonical isomorphism
$$
H_n(\SL(\cO_K),\RR) \rightarrow  H_n(\SL(\cO_K),\RR)^{\vee\vee} \simeq H^n(\SL(\cO_K),\RR)^\vee.
$$
We will define the following $\RR$-valued continuous homology\footnote{Due to finite dimensionality, it coincides
 with the other possible definition of continuous homology via derived functors.}:
$$
H_n^{ct}\big(\SL(K \otimes_\QQ \RR),\RR\big):=H^n_{ct}\big(\SL(K \otimes_\QQ \RR),\RR\big)^\vee
$$
Therefore, the preceding corollary can be formulated by saying that the restriction map $\nu$ induces
 an isomorphism of $\RR$-coalgebras:
$$
H_*(\SL(\cO_K),\RR) \xrightarrow{\nu_*} H_*^{ct}\big(\SL(K \otimes_\QQ \RR),\RR\big).
$$
Therefore, according to \eqref{eq:rational_Kth&hlg}, one gets the following computation:
\end{num}
\begin{cor}
Consider the above notation.
 Then for any integer $n \geq 2$, the Hurewicz map, composed with the above isomorphism $\nu_*$,
 induces an isomorphism of $\RR$-vector spaces:
$$
K_n(\cO_K) \otimes_\QQ \RR \xrightarrow \sim \Prim_n H_*^{ct}\big(\SL(K \otimes_\QQ \RR),\RR\big).
$$ 
\end{cor}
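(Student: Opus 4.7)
The plan is to chain together three ingredients already laid down in the text: the rational Hurewicz--Milnor--Moore isomorphism \eqref{eq:rational_Kth&hlg}, compatibility of the primitive-part functor with the flat base change $\QQ \to \RR$, and the coalgebra isomorphism $\nu_*$ obtained in the preceding corollary.

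First I would start from the isomorphism \eqref{eq:rational_Kth&hlg} applied to the Dedekind ring $\cO_K$:
\[
K_n(\cO_K)_\QQ \xrightarrow{\sim} \Prim_n H_*(\SL(\cO_K),\QQ).
\]
Tensoring the left-hand side with $\RR$ over $\QQ$ gives $K_n(\cO_K)\otimes_\QQ \RR$, so I need to check that the same tensoring commutes with taking primitive elements of the Hopf algebra $H_*(\SL(\cO_K),\QQ)$. Since $\Prim_n$ is by definition the kernel of the linear map $\bar\Delta = \Delta - (1\otimes\mathrm{id}) - (\mathrm{id}\otimes 1)$ associated with the comultiplication on $H_*(\SL(\cO_K),\QQ)$, and since the comultiplication on $H_*(\SL(\cO_K),\RR)$ is the $\RR$-linear extension of the rational one, flatness of $\RR$ over $\QQ$ (together with the universal coefficient theorem $H_*(\SL(\cO_K),\RR)\simeq H_*(\SL(\cO_K),\QQ)\otimes_\QQ \RR$) yields a canonical isomorphism
\[
\Prim_n H_*(\SL(\cO_K),\QQ) \otimes_\QQ \RR \xrightarrow{\sim} \Prim_n H_*(\SL(\cO_K),\RR).
\]

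Next, the preceding corollary provides an isomorphism of graded $\RR$-coalgebras
\[
\nu_*:H_*(\SL(\cO_K),\RR) \xrightarrow{\sim} H_*^{ct}\bigl(\SL(K\otimes_\QQ \RR),\RR\bigr),
\]
obtained by dualizing the cohomological isomorphism of the preceding corollary (which is a morphism of graded $\RR$-algebras); dualizing an isomorphism of finite-dimensional commutative $\RR$-algebras yields an isomorphism of $\RR$-coalgebras. As any coalgebra map carries primitives to primitives, $\nu_*$ restricts to an isomorphism
\[
\Prim_n H_*(\SL(\cO_K),\RR) \xrightarrow{\sim} \Prim_n H_*^{ct}\bigl(\SL(K\otimes_\QQ \RR),\RR\bigr).
\]
Composing the three isomorphisms produces the desired map, and by construction it coincides with the composition of the integral Hurewicz map $K_n(\cO_K)\to H_n(\SL(\cO_K),\ZZ)$ followed by $\nu_*$, after tensoring with $\RR$.

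The only mildly delicate point, which I would expect to be the main obstacle to write up cleanly, is to argue that the dualized coalgebra map $\nu_*$ genuinely \emph{agrees} with the pushforward in (continuous) singular homology, so that the composite agrees on the nose with the ``Hurewicz composed with $\nu_*$'' map in the statement; this is essentially bookkeeping with the van Est and de Rham identifications used in the proof of \Cref{thm:Borel}, but one has to be careful to track the finite-dimensionality statement that ensures $H_n^{ct}(G,\RR)=H^n_{ct}(G,\RR)^\vee$, so that the dualization is harmless in each degree. Everything else is a formal consequence of flat base change and the Milnor--Moore theorem.
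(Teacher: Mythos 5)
Your proposal follows essentially the same route as the paper: apply the rational Hurewicz/Milnor--Moore isomorphism~\eqref{eq:rational_Kth&hlg} to $\cO_K$, extend scalars to $\RR$, and then transport primitives along the coalgebra isomorphism $\nu_*$. The one point where you worry more than you need to is the ``delicate point'' at the end: in the text, $H_n^{ct}\bigl(\SL(K\otimes_\QQ\RR),\RR\bigr)$ is \emph{defined} as the dual $H^n_{ct}\bigl(\SL(K\otimes_\QQ\RR),\RR\bigr)^\vee$, and $\nu_*$ is \emph{defined} as the dual of the algebra isomorphism $\nu^*$ from the preceding corollary (using finite-dimensionality in each degree, which that corollary supplies, to identify $H_n(\SL(\cO_K),\RR)$ with its bidual). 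There is therefore no independent notion of ``pushforward in continuous singular homology'' against which $\nu_*$ must be checked; the potential bookkeeping issue you flag does not arise. Your observation that $\Prim_n$ commutes with the flat base change $\QQ\to\RR$ is implicit in the paper and correct; spelling it out is a harmless addition.
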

This is a nice formula, which already allows us to recover the rank of K-theory.
 However, we do not know how to determine the gamma filtration on the right-hand side directly,
 though it exists naturally (see below). Therefore, we must return to the known strategy
 and use the compact form of the ind-Lie group $\SL(K \otimes_\QQ \RR)$.

\begin{num}
We now recall the necessary computations from Lie groups and Lie algebras.
 With the ind-Lie group  $G=\SL(K \otimes_\QQ \RR)=\SL(\RR)^{r_1} \times \SL(\CC)^{r_2}$,
 one associates using Cartan duality the following objects:
\begin{center}
\begin{tabular}{|l|l|l|}
\hline
\emph{ind-object of}
&non ind-compact type & ind-compact dual \\
\hline
Lie groups & 
$G=\SL(\RR)^{r_1} \times \SL(\CC)^{r_2}$
 & $G^u=\SU^{r_1} \times (\SU \times \SU)^{r_2}$ \\
\hline
max. compact
 & $K=\SO^{r_1} \times \SU^{r_2}$ & \text{same} \\
\hline
sym. spaces
 & $X=(\SL(\RR)/\SO)^{r_1} \times (\SL(\CC)/\SU)^{r_2}$
 & $X^u=(\SU/\SO)^{r_1} \times \SU^{r_2}$ \\
\hline
\end{tabular}
\end{center}
The duality is defined via the associated ind-Lie algebras:
 starting with the Cartan decomposition $\mathfrak g=\mathfrak k \oplus \mathfrak p$
 associated with $G$ (and its Killing form),
 we consider the \emph{compact form} of $\mathfrak g$ as the Lie algebra
 $\mathfrak g^u=\mathfrak k \oplus \mathfrak p^u$ where
 $\mathfrak p^u=i.\mathfrak p \subset \mathfrak g \otimes_\RR \CC$.
 Then $\mathfrak g^u$ is the ind-Lie algebra of $G^u$.

As explained in the proof of \Cref{thm:Borel},
 the continuous cohomology of $G$ can be identified with the cohomology
 of the differential graded $\RR$-algebra $\Omega^*(X)^G$ of $G$-invariant
 differential forms on $X$.\footnote{Recall that the differentials of this complex
 are trivial, as the $G$-invariant differential forms are in fact harmonic.}
 Further, the classical Lie cohomology theory translates into the following composite
 of isomorphisms or homotopy equivalence:
$$
\Omega^*(X)^G \underset{(1)}{\xrightarrow \simeq} C^*(\mathfrak g,\mathfrak k;\RR)
 \underset{(2)}{\xrightarrow \simeq} C^*(\mathfrak g^u,\mathfrak k;\RR)
 \underset{(3)}{\xleftarrow \simeq} \Omega^*(X^u)^{G^u} 
 \underset{(4)}{\xrightarrow{\sim}} \Omega^*(X^u)
$$
where (1) and (3) are the isomorphisms obtained by evaluating
 a differential form on $X$ or $X_u$ at the base point (class of $\Id$),
 (2) is the isomorphism which, in degree $n$, is multiplication with $i^n$,
 and (4) is a homotopy equivalence --- the retraction of the obvious inclusion --- given by integration
 $\omega \mapsto \int_{G^u} g^*\omega dg$ where $dg$ is the Haar measure on $G^u$.

Using the de Rham isomorphism for the ind-smooth variety $X^u$, we finally
 get a composite isomorphism:
$$
\varphi:H^*_{ct}(G,\RR) \underset{(1234)}{\xrightarrow{\ \sim\ }} H^*_{dR}(X^u) \xrightarrow \sim H^*_{\text{sing}}(X^u,\RR).
$$
Let us pause the exposition with the following definition.
\end{num}
\begin{df}
For any integer $n \geq 2$, we have obtained an isomorphism:
$$
\nu_{\text{Bo}}:K_n(\cO_K) \otimes_\ZZ \RR \xrightarrow{\nu_*} \Prim_n H_*^{ct}\big(\SL(K \otimes_\QQ \RR),\RR\big)
 \xrightarrow{\varphi^\vee} \Prim_n H_*^{\text{sing}}\big((\SU/\SO)^{r_1} \times \SU^{r_2},\RR\big)
$$
called the \emph{Borel isomorphism} associated with the number field $K$ (in degree $n$).
\end{df}
Note that the Borel regulator is then obtained by the following composite map:
\begin{equation}\label{eq:Borel-regulator}
\rho_{\text{Bo}}:K_n(\cO_K)_\QQ \rightarrow K_n(\cO_K) \otimes_\ZZ \RR \xrightarrow{\nu_{\text{Bo}}} \Prim_n H_*^{\text{sing}}\big((\SU/\SO)^{r_1} \times \SU^{r_2},\RR\big).
\end{equation}

\begin{rem}
\begin{enumerate}
\item Beware that Burgos has explained with remarkable precision in \cite[\textsection 9.5]{Burgos}
 that one needs to renormalize
 the above regulator to compare it with the Beilinson regulator
 (up to multiplying by $2$ on the factors corresponding to the $r_1$ real places).
 In our approach, this problem is irrelevant.
\item The only non-canonical isomorphism involved in the Borel regulator
 is the isomorphism (2) mentioned above. However, it is noted that it is degree-wise
 a multiplication by a scalar within the complexified ind-Lie algebra $\mathfrak g_\CC$.
\end{enumerate}
\end{rem}

\begin{num}
We are now in a position to elucidate the behavior of the Borel regulator
 with respect to the canonical $\lambda$-structure on the left-hand side,
 and an appropriate computable $\lambda$-structure on the right-hand side.

To define the $\lambda$-operations, we use the method of Soul\'e in \cite[\textsection 1]{Soule}. 
 In our case, this leads us to consider the exterior power of matrices.
 After taking the obvious base on $\Lambda^r(\ZZ^n)$,
 we deduce a morphism of schemes
$$
\Lambda^r_n:\SL_n \rightarrow \SL_{\binom n r}
$$
and as $n$ goes to $\infty$, a morphism of ind-group schemes $\Lambda^r:\SL \rightarrow \SL$.
 According to the construction of \emph{loc. cit.}, this morphism evaluated at $\cO_K$
 induces by functoriality a morphism on $K_n(\cO_K)=\pi_n(\BSL(\cO_K)^+)$ which
 is nothing else than the operation $\lambda^r$ defined in \emph{loc. cit.}

By the same formula, we obtain  operations $\Lambda^r$ on $\SU$ and $\SO$ respectively,
 and therefore by functoriality an operation on the target of the Borel regulator $\rho_B$.
 It follows from the construction, as we have detailed, that $\rho_B$ commutes with the action
 just constructed on both its source and target.

It remains to determine the induced structure on
 $H_*^{\text{sing}}\big((\SU/\SO)^{r_1} \times \SU^{r_2},\RR\big)$
 and on its primitive part.

For this, we recall the theory of characteristic classes for notation.
 Given a topological group $\mathfrak G$,
 there exists a universal principal topological bundle $E\mathfrak G$
 classifying respectively the special\footnote{\emph{i.e.} with trivial determinant}
 complex and real vector bundles (see e.g. \cite{MilnorB}). One deduces the classical  fibration sequence:
$$
\mathfrak G \rightarrow \mathrm E\mathfrak G \xrightarrow p \B \mathfrak G
$$
where the classifying space $\B \mathfrak G$ is pointed by the trivial principal bundle.
 This implies the existence of the (classical) homotopy equivalence $\mathfrak G \simeq \Omega \B \mathfrak G$
 with target the indicated loop space.
 By adjunction, one deduces the so-called \emph{suspension map}
$$
s:\Sigma \mathfrak G \rightarrow \B \mathfrak G
$$
and the induced map in cohomology:
$$
H^{n+1}_{\text{sing}}(\B \mathfrak G,\RR) \xrightarrow{s^*} H^{n+1}_{\text{sing}}(\Sigma \mathfrak G,\RR) \simeq H^{n}_{\text{sing}}(\mathfrak G,\RR).
$$
Note that this map is natural with respect to the topological group $\mathfrak G$.
 In our case, where $\mathfrak G=\SU, \SO$ is an ind-Lie group, we deduce
 from the Serre spectral sequence associated with the fibration $p$,
 and the known cohomologies of $\mathfrak G$ and $B \mathfrak G$  (see e.g. \cite[Chap. 16]{Switzer}),
 that the suspension map is an isomorphism. By taking real duals, one deduces
 the following homological suspension isomorphisms which fit into the commutative diagram:
\begin{equation}\label{eq:suspension_iso}
\begin{split}
\xymatrix@R=12pt@C=30pt{
H_{n}^{\text{sing}}(\SU,\RR)\ar^-{s_*}_-\sim[r] & H_{n+1}^{\text{sing}}(\B \SU,\RR) \\
H_{n}^{\text{sing}}(\SO,\RR)\ar^-{s_*}_-\sim[r]\ar^{\iota_*}[u] & H_{n+1}^{\text{sing}}(\B \SO,\RR)\ar_{\iota_*}[u]
}
\end{split}
\end{equation}
where $\iota:\SO \rightarrow \SU$ is the natural inclusion.
 Now, the theory of characteristic classes (see again \cite[Chap. 16]{Switzer}) applies and gives:
\begin{align*}
H^{*}_{\text{sing}}(\B \SU,\RR)&=\RR[c_2,\hdots,c_i,\hdots] \\
H^{*}_{\text{sing}}(\B \SO,\RR)&=\RR[p_2,\hdots,p_j,\hdots]
\end{align*}
where the Chern (resp. Pontryagin) classes $c_i$ (resp. $p_j$) are in degree $2i$ (resp. $4j$)
 and in weight $i$ (resp. $j$) for the natural $\lambda$-structure, induced as above.
 One can compute $\iota^*$ via complexification of real vector bundles,
 and one deduces that 
$$
\iota^*(c_{i})=\begin{cases}
0 & i \text{ odd,} \\
(-1)^j(2j-1)!p_j+\text{lower degrees} & i=2j.
\end{cases}
$$
Therefore, the kernel of $\iota^*$ in rational cohomology is generated by the odd Chern classes.
 One should be careful also that $\iota^*$ divides weights by $2$.\footnote{All this comes from the relation
 $p(E)=c(E \otimes_\RR \CC).c(E \otimes_\RR \CC)$ for a real vector bundle $E$, between
 the total Pontryagin and Chern classes. Thus $\iota^*$ can be thought of as extracting a square root.}

Therefore the singular homology of $\B\SU$ is the free symmetric algebra generated by the $c_i^\vee$
  in even degree $2i$ and weight $i$ for $i \geq 2$. The operation of taking the cokernel of $\iota_*$
 kills the classes $c_i^\vee$ for $i$ even.
 According to the construction of $\lambda$-operations recalled above,
 the suspension isomorphisms in diagram \eqref{eq:suspension_iso} shift the degree by $+1$ and preserves the weight.
 Thus one finally obtains the following key computation:
\end{num}
\begin{prop}
The real singular homology with its weight graduation coming from the previously defined $\lambda$-structure
$$
 H_*^{\text{sing}}\big((\SU/\SO)^{r_1} \times \SU^{r_2},\RR\big)
$$
is the free exterior real algebra generated by $r_2$ classes of degree $2i-1$ and weight $i$
 for each $i \geq 2$ and by $r_1$ classes of degree  $2j-1$ and weight $j$ for each odd $j \geq 2$.
\end{prop}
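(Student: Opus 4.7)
The strategy is to apply the Künneth formula to reduce the computation to the two individual factors $\SU$ and $\SU/\SO$, each equipped with its weight graduation coming from the action of the operations $\Lambda^k$ (which commute with the inclusion $\iota:\SO\to\SU$ and therefore descend to the quotient $\SU/\SO$). One then computes each factor separately using the given information about $H^*(B\SU,\RR)$, $H^*(B\SO,\RR)$ and the map $\iota^*$, together with the suspension isomorphisms \eqref{eq:suspension_iso}.

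For the factor $\SU$, the suspension diagram transfers the polynomial presentation $H^*(B\SU,\RR)=\RR[c_2,c_3,\ldots]$ with $c_i$ of degree $2i$ and weight $i$ into a presentation of the real homology of $\SU$. More precisely, by the standard analysis of the Serre spectral sequence of the path--loop fibration $\SU\to E\SU\to B\SU$ (Borel's theorem on polynomial base algebras in characteristic $0$), $H^*(\SU,\RR)$ is the free exterior algebra on the transgressions $x_{2i-1}=\tau(c_i)$, $i\ge 2$. Since the whole fibration is $\Lambda^k$-equivariant, naturality of the spectral sequence forces $\tau$ to preserve weights; combined with the degree shift by $-1$, this places $x_{2i-1}$ in degree $2i-1$ and weight $i$. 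Passing to real duals yields the analogous exterior description for $H_*^{sing}(\SU,\RR)$.

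For the factor $\SU/\SO$, consider the homotopy fibre sequence $\SU/\SO\to B\SO\to B\SU$ associated with $\iota$, which is again $\Lambda^k$-equivariant. The induced Serre spectral sequence is
\[
E_2^{p,q}=H^p(B\SU,\RR)\otimes H^q(\SU/\SO,\RR)\Longrightarrow H^{p+q}(B\SO,\RR).
\]
The explicit formula for $\iota^*$ recalled in the text shows that the image of $\iota^*$ is the polynomial subring of $H^*(B\SO,\RR)$ generated by $\iota^*(c_{2j})$ for $j\geq 2$, \emph{i.e.} by non-zero scalar multiples of the Pontryagin classes modulo lower-degree products. Borel's theorem for fibrations with polynomial cohomology on base and total space then forces the kernel of $\iota^*$, generated by the odd Chern classes $c_i$ for $i\ge 3$ odd, to transgress to a free set of exterior generators of $H^*(\SU/\SO,\RR)$. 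As before, $\Lambda^k$-equivariance of the spectral sequence implies that $\tau(c_i)$ has weight $i$, and it lies in degree $2i-1$; dualising, these become generators of $H_*^{sing}(\SU/\SO,\RR)$.

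Combining the two computations via the Künneth formula, which respects the weight grading since the $\Lambda^k$ act factor-wise, yields the claimed description of $H_*^{sing}\bigl((\SU/\SO)^{r_1}\times \SU^{r_2},\RR\bigr)$. The main technical point of the proof is to verify, at every step, that the $\lambda$-operations are compatible with the suspension isomorphism and with the Serre spectral sequence transgressions, so that the weights tracked on Chern (and odd Chern) classes are preserved under passage to $\SU$ and to $\SU/\SO$. This compatibility follows from naturality of all the constructions involved and from the fact that the $\Lambda^k$ are morphisms of ind-Lie groups compatible with $\iota$, with the projections $E\mathfrak{G}\to B\mathfrak{G}$ and with the homotopy fibre of $B\iota$.
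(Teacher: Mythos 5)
Your proof is correct and lands on the same key computation as the paper, but it makes explicit a step that the paper leaves rather terse. The paper's argument, compressed into the discussion preceding the proposition, passes from $H_*(B\SU,\RR)$ and $H_*(B\SO,\RR)$ to $H_*(\mathfrak G,\RR)$ via the suspension isomorphisms of diagram \eqref{eq:suspension_iso}, and then asserts that ``taking the cokernel of $\iota_*$'' produces the exterior generators of $H_*(\SU/\SO,\RR)$. Read literally, the cokernel of $\iota_*:H_*(\SO,\RR)\to H_*(\SU,\RR)$ is not the full homology algebra of $\SU/\SO$; what is really being used is that the symmetric-space fibration $\SO\to\SU\to\SU/\SO$ is totally non-cohomologous to zero (because $\iota^*$ is surjective), so $H_*(\SU,\RR)\cong H_*(\SO,\RR)\otimes H_*(\SU/\SO,\RR)$ as Hopf algebras, and the cokernel statement is correct at the level of the \emph{primitives}, which generate everything since we are over $\RR$. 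Your route instead applies Borel's transgression theorem to the fibration $\SU/\SO\to B\SO\xrightarrow{B\iota}B\SU$, using that $(B\iota)^*$ is surjective with kernel the regular ideal $(c_3,c_5,c_7,\dots)$, to conclude $H^*(\SU/\SO,\RR)$ is exterior on the transgressions of the odd Chern classes; this is a precise and essentially equivalent way to obtain the same list of generators. One small wording slip: transgression runs from fiber to base in the Serre spectral sequence of $\SU/\SO\to B\SO\to B\SU$, so it is the exterior generators of $H^*(\SU/\SO)$ that transgress to the $c_i$ with $i$ odd, not the other way around; your degree and weight bookkeeping is nevertheless correct. The weight-compatibility argument (naturality of everything under the ind-group homomorphisms $\Lambda^k$, which restrict to $\SO$ and descend to $\SU/\SO$) is exactly what the paper's ``construction of $\lambda$-operations recalled above'' refers to, and the final Künneth step matches the paper's.
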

This follows from the preceding discussion and the (dual) K\"unneth formula.
 Let us draw the explicit corollary, which completes the proof of \Cref{thm:Borel}.
\begin{cor}\label{cor:weights-cpt-dual}
Given a number field $K$ with $r_1$  real and $r_2$ complex places,
 a basis for the K-theory graded $\QQ$-vector space $K_*(\cO_K)_\QQ$ restricted to degrees $*\geq 2$
 is given by:
\begin{itemize}
\item $r_2$ elements in degree $2i-1$ and weight $i$ for each $i \geq 2$,
\item $r_1$ elements in degree $2j-1$ and weight $j$ for each odd $j \geq 2$.
\end{itemize}
\end{cor}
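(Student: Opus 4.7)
The plan is to combine the Borel regulator isomorphism $\rho_B$ with the computation of the preceding proposition, using that $\rho_B$ is compatible with the $\lambda$-structures on both sides.

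First, I would invoke the isomorphism of $\RR$-vector spaces
\[
\rho_B : K_n(\cO_K) \otimes_\QQ \RR \xrightarrow{\ \sim\ } \Prim_n H_*^{sing}\bigl((\SU/\SO)^{r_1} \times \SU^{r_2},\RR\bigr),
\]
which is valid for $n \geq 2$. Since $\rho_B$ is built by composing $\nu_*$, the van Est isomorphism $\varphi$, and the de Rham comparison, each step is natural with respect to the operations induced by $\Lambda^k : \SL \to \SL$ (applied to $\cO_K$, to the compact forms $\SU$ and $\SO$, and to their classifying spaces). In particular $\rho_B$ intertwines the $\gamma$-filtration weights on the source with the weights defined on singular homology via the $\lambda$-structure coming from these exterior-power operations. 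So determining weights and dimensions on the right-hand side will yield the corollary.

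Next I would appeal to the preceding proposition, which computes $H_*^{sing}\bigl((\SU/\SO)^{r_1} \times \SU^{r_2},\RR\bigr)$, together with its weight grading, as the free graded-commutative (exterior) $\RR$-algebra on generators
\begin{itemize}
\item $r_2$ generators $\xi_i$ in degree $2i-1$ and weight $i$, one for each $i \geq 2$;
\item $r_1$ generators $\eta_j$ in degree $2j-1$ and weight $j$, one for each odd $j \geq 2$.
\end{itemize}
For a free exterior algebra on odd-degree generators, with its canonical Hopf-algebra structure inherited from the H-space structure of the underlying topological group, the primitive elements in degree $n$ are precisely the $\RR$-span of those generators of degree $n$ (all non-trivial products lie in strictly higher degrees and have non-trivial coproducts). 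Consequently
\[
\Prim_n H_*^{sing}\bigl((\SU/\SO)^{r_1} \times \SU^{r_2},\RR\bigr)
\]
has dimension $r_2$ (resp.\ $r_1+r_2$, resp.\ $0$) when $n = 2i-1$ is odd with $i$ even (resp.\ with $i$ odd, $i\geq 3$, resp.\ $n$ even), and the weight of each primitive equals $i$ in degree $2i-1$. Transporting this through $\rho_B$ gives a basis for $K_n(\cO_K)_\QQ$ of the claimed cardinality, in the claimed weight; rationality of the dimension count is immediate from $\dim_\QQ K_n(\cO_K)_\QQ = \dim_\RR (K_n(\cO_K) \otimes_\QQ \RR)$.

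The main conceptual obstacle — already dispatched above in the paper — is the compatibility of $\rho_B$ with the $\lambda$-structures, since the Cartan-duality isomorphism $C^*(\mathfrak g,\mathfrak k;\RR) \simeq C^*(\mathfrak g^u,\mathfrak k;\RR)$ is not induced by a map of topological groups; here the crucial point is that this isomorphism is degreewise multiplication by a scalar inside $\mathfrak g_\CC$, so it preserves the weight grading coming from exterior-power operations, which are defined by morphisms of (ind-)group schemes $\Lambda^k$ on both sides. Once this naturality is used, the proof is purely the bookkeeping of primitives in a free exterior Hopf algebra described above.
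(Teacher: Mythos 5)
Your proof is correct and follows the same route as the paper: combine the Borel regulator isomorphism with the computation of the target homology as a weighted free exterior algebra (from the preceding proposition), use $\lambda$-compatibility of $\rho_B$ to transport the weight grading, and identify primitives with the span of the generators. The paper leaves this bookkeeping implicit (``Let us draw the explicit corollary''), and your write-up supplies exactly those details.
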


\subsection{The effect of transcendental elements}\label{sec:cohm-fields}

We will now deduce from the computations of generic motives
 obtained in the preceding section
 some information about motivic cohomology of fields. 
 All subsequent results are based on formula \eqref{eq:cohm-gmot}.
 First, as a corollary of \Cref{prop:gmot-rational1} and \Cref{thm:Borel},
 we get the following computation:
\begin{prop}\label{prop:cohm-rat-curve}
Let $K$ be a number field. Then for any integers $(n,i) \in \ZZ^2$,
$$
\HB^{n,i}(K(t))=\begin{cases}
\QQ & n=i=0 \\
K^\times \oplus Z_0(\AA^1_K)_\QQ & n=i=1 \\
\HB^{1,i}(K) & n=1, i>1 \\
\bigoplus_{x \in \AA^1_{K,(0)}} \HB^{1,i-1}(\kappa_x) & n=2, i \geq 2 \\
0 & \text{otherwise.}
\end{cases}
$$
We have denoted by $Z_0(-)$ the group of algebraic $0$-cycles.
\end{prop}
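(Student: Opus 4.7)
The plan is to combine three ingredients: the description of motivic cohomology via generic motives in \eqref{eq:cohm-gmot}, the splitting of $\bM(K(t))$ proved in \Cref{prop:gmot-rational1}, and Borel's computation \Cref{thm:BorelB}. Concretely, starting from
\[
\HB^{n,i}(K(t)) = \Hom_{\pro\DM(k)}\!\big(\bM(K(t)),\un(i)[n]\big)_\QQ,
\]
I will apply the direct sum decomposition
\[
\bM(K(t)) \;\simeq\; \bM(K) \,\oplus\, \pprod{x\in\AA^1_{K,(0)}} \bM(\kappa_x)\{1\}
\]
given by \Cref{prop:gmot-rational1} (with $F=K$), and use the standard formula from \Cref{sec:pro} for $\Hom$ out of a pro-object into a constant one. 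Since the indexing poset is filtered and morphisms into a fixed target commute with finite direct sums, the pro-product becomes an honest direct sum in the mapping group.

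The second step is to identify the resulting summands. Absorbing the twist-shift $\{1\}=(1)[1]$ into the target yields
\[
\HB^{n,i}(K(t)) \;\simeq\; \HB^{n,i}(K) \,\oplus\, \bigoplus_{x\in\AA^1_{K,(0)}} \HB^{n-1,i-1}(\kappa_x).
\]
Every residue field $\kappa_x$ is a finite extension of $K$, hence itself a number field, so Borel's theorem (\Cref{thm:BorelB}) applies to all summands. It then remains to unwind the case analysis: for $(n,i)=(0,0)$ only the first summand survives and equals $\QQ$; for $(n,i)=(1,1)$ we combine $K^\times_\QQ$ from the first summand with the copies of $\QQ=\HB^{0,0}(\kappa_x)$ indexed by closed points, giving exactly $K^\times_\QQ \oplus Z_0(\AA^1_K)_\QQ$; for $n=1,i>1$ the residual summands $\HB^{0,i-1}(\kappa_x)$ vanish, leaving only $\HB^{1,i}(K)$; for $n=2,i\geq 2$ one has $\HB^{2,i}(K)=0$ (a number field has no higher weight cohomology in degree $2$), so only $\bigoplus_x \HB^{1,i-1}(\kappa_x)$ remains.

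In the remaining bidegrees both summands vanish: for $n\geq 3$ Borel gives $\HB^{n,i}(K)=0$ and $\HB^{n-1,i-1}(\kappa_x)=0$ (since $n-1\geq 2$ forces either $i-1\geq 2$ where Borel vanishes, or $i-1<2$ where it vanishes on a number field); for $n=2,i\leq 1$ one has $\HB^{2,i}(K)=0$ and the shifted piece $\HB^{1,i-1}(\kappa_x)$ is $0$; the cases $n\leq 0$ or $(n,i)=(1,i\leq 0)$ are trivial by bidegree. This exhausts the list in the statement.

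There is no genuine obstacle here: the only care needed is bookkeeping, namely (a) verifying that $\Hom$ out of the pro-product $\pprod_x \bM(\kappa_x)\{1\}$ is really the direct sum $\bigoplus_x \HB^{n-1,i-1}(\kappa_x)$ (which follows from the filtered colimit description recalled in \Cref{sec:pro}, since the target is a constant pro-object), and (b) ensuring that \emph{for number fields}, as opposed to rings of integers, Borel's theorem gives $\HB^{2,1}(K)=0$ and $\HB^{2,i}(K)=0$ for $i\geq 2$. With these two points secured, the enumeration of cases is mechanical.
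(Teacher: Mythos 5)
Your proof is correct and follows exactly the route the paper indicates (the paper does not write out the case analysis, simply citing \Cref{prop:gmot-rational1} and Borel's computation as the two ingredients). You apply the decomposition $\bM(K(t)) \simeq \bM(K) \oplus \pprod_x \bM(\kappa_x)\{1\}$, pass the pro-product through $\Hom$ into the constant target $\un(i)[n]$ to obtain the filtered-colimit direct sum, absorb the twist-shift, and then feed all the number-field summands into \Cref{thm:BorelB} — precisely the bookkeeping that the paper leaves to the reader.
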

As there are infinitely many points $x \in \AA^1_{K,(0)}$ whose residue field $\kappa_x$
 has at least one complex place, one deduces from \Cref{thm:Borel} that
 for all $i \geq 2$, the abelian group $\HB^{n,i}(K(t))$ has infinite rank.

This picture can be confirmed in higher transcendence degree.
 Indeed, applying now \Cref{prop:gmot-higher-trdeg}, one obtains the following proposition.
\begin{prop}\label{prop:cohm-inf-rk}
Let $K$ be a field of characteristic $0$
 such that $\dtr K\geq d>0$.

Then for any $n \in [2,d+1]$ and any $i \geq n$,
 the abelian group $\HB^{n,i}(K)$ has infinite rank.
\end{prop}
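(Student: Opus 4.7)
The plan is to reduce to the case $K = \QQ(t_1, \ldots, t_{n-1})$ purely transcendental of degree $n - 1$ and then induct on $n$, peeling off one transcendental variable at each step via \Cref{prop:gmot-rational1}, with the base case supplied by \Cref{prop:cohm-rat-curve} applied over $\QQ$.

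For the reduction, since $\dtr K \geq d \geq n - 1$, I pick algebraically independent elements $t_1, \ldots, t_{n-1} \in K$ and set $E = \QQ(t_1, \ldots, t_{n-1})$. Writing $K$ as the filtered colimit of its function subfields $L$ containing $E$, the continuity isomorphism \eqref{eq:continuity} yields $\HB^{n,i}(K) = \colim_L \HB^{n,i}(L)$. By \Cref{cor:inclusion-splits} (with $R = \QQ$), each map $\bM(L) \to \bM(E)$ is a split epimorphism in $\DM(\QQ)_\QQ$; applying $\Hom(-, \un(i)[n])$ gives a split monomorphism $\HB^{n,i}(E) \hookrightarrow \HB^{n,i}(L)$. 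Since filtered colimits preserve injectivity, we obtain $\HB^{n,i}(E) \hookrightarrow \HB^{n,i}(K)$, so it suffices to show $\HB^{n,i}(E)$ has infinite rank.

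I proceed by induction on $n \geq 2$. For the base case $n = 2$, \Cref{prop:cohm-rat-curve} applied over $\QQ$ gives, for $i \geq 2$,
\[
\HB^{2,i}\bigl(\QQ(t)\bigr) = \bigoplus_{x \in \AA^1_{\QQ,(0)}} \HB^{1, i-1}(\kappa_x),
\]
and by \Cref{thm:BorelB} infinitely many summands are nonzero: for $i = 2$ every $\kappa_x^\times \otimes \QQ$ is nontrivial; for $i$ even $\geq 4$ every number field contributes via $\QQ^{r_1 + r_2}$; for $i$ odd $\geq 3$ the infinitely many imaginary quadratic $\kappa_x$ contribute $\QQ^{r_2}$. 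For the inductive step with $n > 2$, set $F = \QQ(t_1, \ldots, t_{n-2})$; applying $\Hom(-, \un(i)[n])$ to the splitting of \Cref{prop:gmot-rational1} for $F \subset F(t_{n-1})$, and using the cancellation-type identity $\Hom(\bM(\kappa_x)\tw 1, \un(i)[n]) = \HB^{n-1, i-1}(\kappa_x)$, yields
\[
\HB^{n,i}\bigl(\QQ(t_1, \ldots, t_{n-1})\bigr) = \HB^{n,i}(F) \oplus \bigoplus_{x \in \AA^1_{F,(0)}} \HB^{n-1, i-1}(\kappa_x).
\]
Each $\kappa_x$ is finite over $F$, hence a field of characteristic $0$ of transcendence degree $n - 2$ over $\QQ$; the inductive hypothesis applied with $(n', i') = (n-1, i-1)$, which satisfies $n' \in [2, \dtr \kappa_x + 1]$ and $i' \geq n'$, shows each $\HB^{n-1, i-1}(\kappa_x)$ has infinite rank, and summing over the infinite set $\AA^1_{F,(0)}$ concludes.

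The main technical point is the continuity reduction: one must check that the split monos produced by \Cref{cor:inclusion-splits} for varying $L$ are compatible along the filtered system so as to aggregate into a bona fide injection on the colimit $\HB^{n,i}(K)$. This is a routine but essential verification; beyond it, the argument is a clean induction driven by \Cref{prop:gmot-rational1} and Borel's theorem on number fields.
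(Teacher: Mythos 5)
Your proof is correct and rests on the same three ingredients the paper uses: the reduction via \Cref{cor:inclusion-splits} to a purely transcendental extension of $\QQ$, the direct-sum decomposition of generic motives obtained by adjoining one transcendental variable, and Borel's computation of $\HB^{*,*}$ of number fields. The only real difference is organizational: the paper jumps directly to $\QQ(t_1,\dots,t_d)$ and invokes the ``all-at-once'' splitting of \Cref{prop:gmot-higher-trdeg} to exhibit $\bigoplus_{x\in\AA^1_{\QQ,(0)}}\HB^{1,i-n+1}(\kappa_x)$ as a direct summand, landing immediately in the number-field layer; you instead stop at $\QQ(t_1,\dots,t_{n-1})$ and run an explicit induction on $n$, peeling one variable per step via \Cref{prop:gmot-rational1} and invoking the inductive hypothesis for the residue fields $\kappa_x$ (of transcendence degree one lower), with \Cref{prop:cohm-rat-curve} as the base case. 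Since \Cref{prop:gmot-higher-trdeg} is itself proved by exactly such an iteration of \Cref{prop:gmot-rational1}, you are effectively unpacking that lemma rather than citing it. One small remark on your closing caveat: the compatibility of the \emph{splittings} across the filtered system of finitely generated subfields $L\subset K$ is not actually needed. The pullback maps $\HB^{n,i}(E)\to\HB^{n,i}(L)$ are the canonical ones and are automatically compatible; the splittings only serve to certify, level by level, that each such map is injective, after which a filtered colimit of injections out of a fixed source is an injection.
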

\begin{proof}
Indeed, by assumption, $K$ contains the field of rational functions $\QQ(t_1,...,t_d)$.
 According to \Cref{cor:inclusion-splits}, one can assume that $K=\QQ(t_1,,...,t_d)$.
 According to \Cref{prop:gmot-higher-trdeg}, it follows that
 $\HB^{n,i}(K)$ contains $\bigoplus_{x \in \AA^1_{\QQ,(0)}} \HB^{1,i-n+1}(\kappa_x)$
 which is infinite according to \Cref{thm:Borel} and the fact there exists infinity many
 finite monogeneous extensions of $\QQ$ with at least one complex place.
\end{proof}

\begin{rem}
According to the splitting $\varphi$ given in \Cref{prop:gmot-rational1},
 it is straightforward to construct infinitely many free families of elements of $\HB^{n,i}(\QQ(t_1,...,t_n))$
 under the assumptions of the previous proposition.
 As a function field $K/k$ of transcendence degree $d=n$ is a finite extension of $\QQ(t_1,...,t_n)$
 one deduces infinitely many free families of $\HB^{n,i}(K)$.
 We leave these constructions as an exercise to the reader.\footnote{As a hint,
 consider products of symbols extracted from some transcendental basis of $K/\QQ$,
 elements coming from the Borel regulator on an appropriate number field $E$
 and eventually applying restriction or corestriction along finite field extensions on motivic cohomology.
 All these operations can be realized geometrically as well by taking appropriate smooth models.}
\end{rem}

One can further extend this argument to obtain non-finiteness results of motivic cohomology.
\begin{prop}\label{prop:cohm-uncount}
Let $K$ be an extension field of $\QQ$ of infinite transcendence degree.
 Then for any pair of integers $(n,i)$ with $i \geq n \geq 2$, the abelian group
 $\HB^{n,i}(K)$ has rank equal to $\card(K)$.
\end{prop}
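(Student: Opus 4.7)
The plan is to establish matching upper and lower bounds of $\card(K)$. For the upper bound, the explicit presentation of $\HB^{n,i}(K)$ recalled at the start of Section~\ref{sec:cohm-fields-main} expresses it as a subquotient of the free abelian group $c_K(\Delta^{i-n},\GG^{\wedge,i})$, whose generators are closed integral subschemes of finite-type $K$-schemes and hence described by finitely many elements of $K$. This bounds $|\HB^{n,i}(K)|$, and thus its rank, by $\card(K)$.

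For the lower bound, I would use that $\dtr(K/\QQ)$ is infinite so $\card(K)=\max(\aleph_0,\dtr(K/\QQ))$. Fix a transcendence basis $\{t_\alpha\}_{\alpha\in A}$ of $K/\QQ$ with $|A|=\dtr(K/\QQ)$ and, since $|A|$ is infinite, partition $A$ into $|B|=|A|$ pairwise disjoint $(n-1)$-element subsets $\{S_\beta\}_{\beta\in B}$. Set $L_\beta:=\QQ(\{t_\alpha\}_{\alpha\in S_\beta})\subset L:=\QQ(\{t_\alpha\}_{\alpha\in A})\subset K$. By \Cref{prop:gmot-higher-trdeg} applied to the purely transcendental extension $\QQ\to L_\beta$ of degree $n-1$, the pro-motive $\pprod{x\in\AA^1_{\QQ,(0)}}\bM(\kappa_x)\tw{n-1}$ is a direct factor of $\bM(L_\beta)$. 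Fix a closed point $x_0\in\AA^1_\QQ$ with $\kappa_{x_0}=\QQ(i)$, so that $r_2(\QQ(i))=1$; by \Cref{thm:BorelB}, $\HB^{1,i-n+1}(\kappa_{x_0})\neq 0$ for every $i\geq n$. Pick $\eta\neq 0$ there, transport it via the direct factor to $\omega_\beta\in\HB^{n,i}(L_\beta)$, and push along the split injections $L_\beta\hookrightarrow L\hookrightarrow K$ supplied by \Cref{cor:inclusion-splits} to obtain $\omega_\beta^K\in\HB^{n,i}(K)$.

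The core of the argument is showing that the $|B|=\card(K)$ classes $\omega_\beta^K$ are linearly independent, which it suffices to verify inside $\HB^{n,i}(L)$. For each $\beta_0\in B$ with $S_{\beta_0}=\{\alpha_1,\ldots,\alpha_{n-1}\}$, form the iterated residue $\partial_{\beta_0}:\HB^{n,i}(L)\to\HB^{1,i-n+1}(\tilde\kappa)$ obtained by composing the $(t_{\alpha_j})$-adic residues for $j=2,\ldots,n-1$ with the residue at the $(t_{\alpha_1}^2+1)$-adic valuation, where $\tilde\kappa=\QQ(i)(\{t_\beta\}_{\beta\notin S_{\beta_0}})$ is a purely transcendental extension of $\kappa_{x_0}$. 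By construction of the direct factor, $\partial_{\beta_0}(\omega_{\beta_0}^L)$ equals the image of $\eta$ under the split injection $\HB^{1,i-n+1}(\kappa_{x_0})\hookrightarrow\HB^{1,i-n+1}(\tilde\kappa)$ from \Cref{cor:inclusion-splits}, hence is non-zero. For $\beta\neq\beta_0$, disjointness $S_\beta\cap S_{\beta_0}=\emptyset$ yields some $\alpha_j\in S_{\beta_0}\setminus S_\beta$; then $v_{t_{\alpha_j}}$ restricts trivially to $L_\beta$, so $\omega_\beta^L$ lifts through $\HB^{n,i}(\cO_{v_{t_{\alpha_j}}})$ and the motivic localization triangle forces $\partial_{v_{t_{\alpha_j}}}(\omega_\beta^L)=0$, whence $\partial_{\beta_0}(\omega_\beta^L)=0$. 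The separating family $\{\partial_\beta\}_{\beta\in B}$ establishes linear independence, giving $\mathrm{rank}\,\HB^{n,i}(K)\geq\card(K)$.

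The main obstacle is making the iterated residue calculus on the essentially smooth scheme $\Spec L$ rigorous: one must justify that the transversal discrete valuations $\{v_{t_{\alpha_j}}\}_{j\geq 2}$ and $v_{t_{\alpha_1}^2+1}$ yield a well-defined composition of residue maps on motivic cohomology, and that the vanishing $\partial_v=0$ on classes pulled back from a subfield over which $v$ restricts trivially is an incarnation of the motivic localization triangle. Granted these standard points, the argument reduces to a combinatorial combination of \Cref{prop:gmot-higher-trdeg}, \Cref{thm:BorelB}, and the splitting properties from \Cref{cor:inclusion-splits}.
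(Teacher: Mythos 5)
Your upper bound is identical to the paper's. Your lower bound is correct in spirit but takes a genuinely different and noticeably heavier route. The paper's argument is much more economical: one picks a single subfield $F\subset K$ with $\card(F)=\card(K)$ and algebraically independent $t_1,\ldots,t_n\in K$ over $F$ (e.g.\ $F=\QQ(T\setminus\{t_1,\ldots,t_n\})$ for a transcendence basis $T$), so that $F(t_1,\ldots,t_n)\subset K$; then \Cref{cor:inclusion-splits} gives the split injection $\HB^{n,i}(F(t_1,\ldots,t_n))\hookrightarrow\HB^{n,i}(K)$, and \Cref{prop:gmot-higher-trdeg} exhibits $\bigoplus_{x\in\AA^1_{F,(0)}}\HB^{1,i-n+1}(\kappa_x)$ directly as a \emph{direct summand} of $\HB^{n,i}(F(t_1,\ldots,t_n))$. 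Because $\pprod_x\bM(\kappa_x)\tw{n-1}$ is a pro-product, $\Hom$ out of it to a constant object is a genuine direct sum over $x$; since $\AA^1_F$ has $\card(F)$ closed points $x$ with $\HB^{1,i-n+1}(\kappa_x)\neq 0$ (a number field with a complex place embeds in each $\kappa_x$), the rank lower bound drops out with no independence verification at all.

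You instead work over the small base $\QQ$, cut the transcendence basis into $\card(K)$ many $(n-1)$-element blocks $S_\beta$, manufacture one class $\omega_\beta$ from each $L_\beta$, and then must \emph{prove} independence by a separating family of iterated residues. This trades the paper's single application of \Cref{prop:gmot-higher-trdeg} over a large base for $\card(K)$-many applications over $\QQ$ plus a nontrivial residue calculus. The residue argument does appear to close: the first residue $\partial_{v_{t_{\alpha_{n-1}}}}$ already kills $\omega_\beta^L$ for $\beta\neq\beta_0$ because that valuation is trivial on $L_\beta$ (your localization-triangle vanishing is the right mechanism), and $\partial_{\beta_0}(\omega_{\beta_0}^L)$ is recovered from $\eta$ by iterated use of (R3a) together with the splitting property $\pi\circ\partial=\Id$ built into the projector of \Cref{prop:gmot-higher-trdeg}. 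But the latter compatibility is precisely the ``main obstacle'' you flag, and it requires unwinding the inductive construction of $\pi_i^n$ (whose twist/direction conventions in the statement of \Cref{prop:gmot-higher-trdeg} already need some care, as both your reading and the paper's proof tacitly correct an off-by-one). So: your proof is sound in outline but spends substantial extra effort to re-derive, via residues, an independence statement the paper gets for free from the pro-product/direct-sum structure. The key simplification you missed is that \Cref{prop:gmot-higher-trdeg} applied once, over a base field of full cardinality, already delivers an explicit free direct summand indexed by $\card(K)$.
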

It is noteworthy that this result extends an earlier theorem by Springer,
 concerning the Milnor K-theory case (\emph{i.e.} $n=i$): see \cite[5.10]{BT}
 (and below for more discussion). 
\begin{proof}
According to formula \eqref{eq:cohm-presentation}, we get $\card(\HB^{n,i}(K)) \leq \card(K)$.

Next, we consider the opposite inequality.
 By assumption, there exists a subfield $F \subset K$ and a finite morphism
 $F(t_1,\ldots,t_n) \rightarrow K$. Applying \Cref{cor:inclusion-splits}, it suffices to treat the case
 of $K=F(t_1,\ldots,t_n)$. According to \Cref{prop:gmot-higher-trdeg},
 it follows that $\HB^{n,i}(K)$ contains, as a direct factor, the abelian group
 $$\oplus_{x \in \AA^1_{F,{(0)}}} \HB^{1,i-n+1}(\kappa_x).$$
 Applying again \Cref{cor:inclusion-splits}, the latter groups contains for instance
 the abelian group
 $$\oplus_{x \in F} \HB^{1,i-n+1}(\QQ[i])$$ whose rank is bigger or equal to $\card(F)=\card(K)$.
\end{proof}

\begin{num}
As a conclusion, there is a range, controlled by the transcendence degree over $\QQ$,
 in which all motivic cohomology groups of fields have infinite rank.
 Nevertheless, a compelling conjecture arises, which we now state.
 We will state it directly in arbitrary characteristic,
 even though our focus has been on the characteristic $0$ case.
 Recall that the Kronecker dimension of a field $K$ is given by
$$
\delta(K)=\begin{cases}
\dtr(K/\QQ)+1 & \car(K)=0 \\
\dtr(K/\FF_p) & \car(K)=p>0.
\end{cases}
$$
\end{num}
\begin{cjt}\label{conj:vanishing}
Let $K$ be a field. Then,
$$
\HB^{n,i}(K)=0
$$
unless $n=i=0$ or $n \in [1,\delta(K)]$.
\end{cjt}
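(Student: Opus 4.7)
The plan is to proceed by induction on the Kronecker dimension $\delta(K)$, organized around the structural decompositions of generic motives established in \Cref{sec:comput-gmot}. As a preliminary reduction, any field is the filtered colimit of its finitely generated subfields along dominant transitions, and $\delta$ is monotone under this formation, so the continuity isomorphism \eqref{eq:continuity} propagates vanishing from the finitely generated case to the general one. The base case $\delta(K) \leq 1$ is already known: Quillen for finite fields, \Cref{thm:BorelB} for number fields, and Harder's theorem for function fields of curves over finite fields.

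For the inductive step, first treat a purely transcendental extension $L = F(t)$ with $\delta(F) = d$. Applying $\Hom(-,\un(i)[n])$ to the splitting of \Cref{prop:gmot-rational1} and using that each residue field $\kappa_x$ of a closed point of $\AA^1_F$ is finite over $F$ (so $\delta(\kappa_x)=d$) yields
\[
\HB^{n,i}(L) \simeq \HB^{n,i}(F) \oplus \bigoplus_{x \in \AA^1_{F,(0)}} \HB^{n-1,i-1}(\kappa_x).
\]
By the induction hypothesis, both summands vanish unless either $(n,i)=(0,0)$, $(n,i)=(1,1)$ (coming from the $(0,0)$ term of the second summand), or $n \in [1,d+1]$, which is precisely $\{(0,0)\} \cup [1,\delta(L)]$. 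Iterating via \Cref{prop:gmot-higher-trdeg} then covers $F(t_1,\dots,t_d)$ for any prime field $F$.

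The main obstacle is the passage from a purely transcendental field $L = F(t_1,\dots,t_d)$ to a finite extension $K/L$ sharing the same Kronecker dimension. Here \Cref{cor:inclusion-splits} only exhibits $\bM(L)$ as a retract of $\bM(K)$, so vanishing propagates in the wrong direction. To close this gap I would attempt a Galois-descent argument using the norm/restriction relation $\pi_* \pi^* = [K:L]\cdot \Id$ applied to a Galois closure $\tilde K/L$, analyzing the residual complement of $\bM(L)$ inside $\bM(K)_{\QQ}$ as an isotypic decomposition under $G = \mathrm{Gal}(\tilde K / L)$; alternatively, a refined coniveau spectral sequence on a regular proper $\ZZ$- or $\FF_p$-model $\mathcal X$ of $K$ of Krull dimension $\delta(K)$, whose $E_1$-terms involve only generic motives of strictly smaller Kronecker dimension. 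The Rost-transform equivalence of \Cref{thm:deglise} provides a natural framework for either strategy, translating the vanishing into a statement about cycle modules evaluated at the generic point.

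Finally, the part of the conjecture covered by the range $n \leq 0$ with $(n,i) \neq (0,0)$ subsumes, for $i > 0$, the Beilinson-Soul\'e vanishing conjecture for fields; this is the deepest piece of the statement and is essentially independent of the strategy above. Consequently, the realistic outcome of the present plan is an unconditional resolution of the upper-bound part $n > \delta(K)$ for finitely generated fields, with the lower-bound part remaining conditional on BSV, pending genuinely new arithmetic input.
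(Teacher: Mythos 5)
This statement is a \emph{conjecture} (\Cref{conj:vanishing}); the paper offers no proof, only the evidence discussed immediately afterwards (the cases $\delta(K)\le 1$ and the case $K(t)$ with $\delta(K)\le 1$, plus the relation to Beilinson--Soul\'e). So there is no ``paper's own proof'' to match against, and any attempt is necessarily speculative. Your reduction via continuity to finitely generated fields and your computation $\HB^{n,i}(F(t)) \simeq \HB^{n,i}(F) \oplus \bigoplus_{x}\HB^{n-1,i-1}(\kappa_x)$ from \Cref{prop:gmot-rational1} are correct (the $\tw 1$-twist does shift both indices by one, and the pro-product dualizes to a direct sum); this is exactly what underlies \Cref{prop:cohm-rat-curve} and the paper's own evidence.

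However, your plan has a real gap that you underestimate, and it kills even the part you claim as unconditional. In the decomposition for $L=F(t)$, the second summand involves $\HB^{n-1,i-1}(\kappa_x)$ for $\kappa_x$ an arbitrary \emph{finite extension} of $F$, with $\delta(\kappa_x)=\delta(F)$. Your induction hypothesis, as produced by the argument, only covers the purely transcendental field $F$ itself and not its finite extensions. This is harmless at the first step (if $F$ is a number field or $\FF_q(t)$, then $\kappa_x$ is again covered by the base case), but it fails already for $\QQ(t_1,t_2)=(\QQ(t_1))(t_2)$: there the $\kappa_x$ range over finite extensions of $\QQ(t_1)$, for which no vanishing is available. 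The finite-extension obstruction thus blocks the purely transcendental tower itself, not merely the ``extra'' finite step at the end, so the claimed unconditional resolution of the upper-bound part $n>\delta(K)$ for finitely generated fields is not achieved. Note also that iterating \Cref{prop:gmot-higher-trdeg} cannot substitute: it only gives a split epimorphism, hence lower bounds on cohomology, not vanishing.

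Your two suggested repairs do not close this gap. The Galois/norm argument shows that $\bM(L)_\QQ$ is a retract of $\bM(\tilde K)_\QQ$ and decomposes the latter into $G$-isotypic pieces, but the non-invariant summands are not expressible in terms of generic motives of smaller Kronecker dimension, so vanishing for them is precisely as hard as the original problem. The coniveau spectral sequence of a regular proper $\ZZ$- or $\FF_p$-model $\mathcal X$ of $K$ is circular: its $p=0$ column is $\HB^{q,i}(K)$ itself (the generic point of $\mathcal X$ has residue field $K$), and the abutment $\HB^{*,i}(\mathcal X)$ is likewise unknown. Your honest assessment of the Beilinson--Soul\'e range ($n\le 0$, $(n,i)\ne(0,0)$) as an independent and deeper input is correct and matches the paper's own remarks.
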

There is little evidence for this conjecture.
 Nevertheless, it holds for fields $K$ and $K(t)$ such that $\delta(K) \leq 1$:
 as we have seen, the case of number fields follows from \Cref{thm:Borel} and \Cref{prop:cohm-rat-curve}.
 The case of characteristic $p>0$ is a consequence of \cite{QuiFF} and \cite{Harder} (see below).
 We will conclude this section with some further evidence.

\begin{num}\textit{The Beilinson-Soulé conjecture}.
Actually the above conjecture in the range $n<1$ is already well-known
 as the Beilinson-Soulé conjecture, in its ``strong'' form --- this terminology appears in the introduction of \cite{LevineAT}.
 By opposition, the ``weak'' form of the \emph{Beilinson-Soulé conjecture} asserts the vanishing of motivic cohomology
 in degrees $<0$ (see e.g. \cite[\textsection 3, Be2]{SV}).\footnote{The integral version is equivalent to the rational
 one: see \cite[Lem. 24]{Kahn}.} In the sequel, we will simply use the terminology \emph{Beilinson-Soulé conjecture} for the strong form of the conjecture.
\end{num}

\begin{rem}
We observed in \Cref{prop:cohm-rat-curve} that the line $n=1$ on motivic cohomology of fields 
 exhibits special behavior: it does not seem to grow with transcendental elements. In general, one conjectures
 following Beilinson
 that for any $i>1$, $\HB^{1,i}(K)=\HB^{1,i}(K_0)$ where $K_0 \subset K$ is the field of constants (\emph{i.e.} the algebraic closure of the prime subfield of $K$).
 This is sometimes called the \emph{rigidity} conjecture.
 For more discussion on this and the Belinson-Soulé conjecture, the reader is referred to
 \cite[\textsection 4.3.4, Lem. 24]{Kahn}.\footnote{Note that the conjectures 16 and 17 of \emph{loc. cit.} are now established as a theorem
 by Voevodsky, with significant contributions from Rost.}
\end{rem}

\begin{num}\textit{The Milnor K-theory part}.
The above conjecture in the special case $n=i$ was already formulated in \cite[Question after the proof of 5.10]{BT}.
 Even in this special case, nothing is known beyond the case $\delta(K)\leq 1$ already mentioned.
\end{num}

\begin{num}
Recall that the Beilinson-Soulé conjecture is \emph{equivalent} to the existence of a motivic $t$-structure
 on (rational) Tate motives over $K$ with a weight filtration such that $\un(n)$ is of weight $-2n$
 (see \cite[Th. 4.2]{LevineAT}).\footnote{In fact,
 the assertion that $\un(n)$ lies in the heart of the motivic $t$-structure and has weight $(-2n)$ for all $n \in \ZZ$
 is equivalent to the strong form of the Beilinson-Soulé conjecture for $K$.}

In this respect, the novelty of our question concerns the vanishing above the Kronecker dimension
 of the field. Moreover, under the Beilinson-Soulé conjecture,
 the vanishing in this particular range is equivalent to the assertion that the category of Tate motives
 has cohomological dimension $\delta(K)$ (see again \cite[Cor. 4.3]{LevineAT} in the case $\delta(K)=1$).
\end{num}

\begin{num}\textit{The positive characteristic case}.
When $K$ is a field of characteristic $p>0$, the above conjecture is actually a consequence of two well-known conjectures.
 In fact, a stronger vanishing of motivic cohomology is anticipated.
 The Beilinson-Parshin conjecture implies that $\HB^{n,i}(K)=0$ if $n\neq i$ (see \cite[Th. 3.4]{Geisser}).
 Thus, the previous conjecture would follow from the Beilinson-Parshin conjecture and a positive resolution
 of the question of Bass and Tate mentioned above.
\end{num}

\subsection{Motivic cohomology with coefficients in abelian varieties}

We finish this paper by mentioning an interesting cohomology theory that
 comes from Voevodsky's theory of motivic complexes.
\begin{df}
Let $k$ be a perfect field, and $A$ be an abelian variety over $k$.
 For an integer $i \in \ZZ$, we consider the motivic complex in $\DM(k)$:
$$
\underline A(i)=\Sigma^\infty(\underline A) \otimes \un(i).
$$
Then, for any smooth $k$-scheme of finite type, we define the motivic cohomology of $X$
 with coefficients in $A$, degree $n$ and twist $i$, as the abelian group:
$$
H^{n,i}(X,\uA)=\Hom_{\DM(k)}\big(\M(X),\uA(i)[n]\big).
$$
\end{df}
This new cohomology theory inherits all the properties
 of the motive $\M(X)$:
\begin{itemize}
\item  It is contravariant with respect to arbitrary morphisms in $X$,
 and covariant up to twists and shift with respect to proper smoothable morphisms
 (use Gysin morphisms).
\item It satisfies the localization property with respect to a smooth closed pair $(X,Z)$ (use the Gysin triangle).
\item It satisfies the projective bundle formula as well as the blow-up formula with smooth center.
\end{itemize}
According to \Cref{thm:abelian-var-HI}, $\uA(-i)=0$ for $i>0$.
 In addition, one deduces from the cancellation theorem
 that for $i\geq0$:
$$
H^{n,i}(X,\uA)) \simeq H^n_\nis(X,\uA(i))
$$
where the left-hand side is the Nisnevich cohomology of the motivic complexes $\uA \otimes \un(i) \in \DMe(k)$.
 According to the property of the derived tensor product in $\DMe(k)$, one deduces
 that $\uA(i)$ is a complex of Nisnevich sheaves concentrated in cohomological degree $\leq i$.
 One deduces:
$$
H^{n,i}(X,\uA)=\begin{cases}
\uA(X) & n=i=0, \\
0 & \text{$i<0$ or $n>2i$ or $n-i>\dim(X)$.}
\end{cases}
$$
Finally, one gets for any function field $K/k$ the following computation:
$$
H^{n,n}(K,\uA) \simeq \KSM n (K;A)
$$
with the notation of \Cref{thm:abelian-var-HI}.
 Moreover, it follows from \Cref{thm:deglise} that $\KSM *(-;A) \simeq \widehat{\uA}_*$ is a cycle module over $k$.
 One deduces a differential from Rost's cycle complex with coefficients in the latter cycle module:
$$
\bigoplus_{y \in X^{(n-1)}} \KSM 1 (\kappa_y;A) \xrightarrow{\mathrm{div}_A} \bigoplus_{x \in X^{(n)}} A(\kappa_x).
$$
\begin{df}
Consider the above notation.
 One defines the $n$-codimensional algebraic cycles of $X$ with coefficients in $A$ as the abelian group:
$$
Z^n(X;A)=\bigoplus_{x \in X^{(n)}} A(\kappa_x).
$$
The map $\mathrm{div}_A$ defined above is called the divisor class map. One defines the Chow group of $X$
 in codimension $n$ and coefficients in $A$ as the cokernel:
$$
\CH^n(X;A):=\coKer(\mathrm{div}_A)=Z^n(X;A)/\sim_A.
$$
\end{df}
Here $\sim_A$ is the obvious equivalence relation,
 that we call rational equivalence. 
 As a consequence of existence of the coniveau spectral sequence together with the previous computation,
 one gets:
\begin{prop}
Let $X$ be a smooth $k$-scheme and $A$ an abelian variety.
 Then for any integer $n \geq 0$, there exists a canonical isomorphism:
$$
H^{2n,n}(X,\uA) \simeq \CH^n(X;A).
$$
\end{prop}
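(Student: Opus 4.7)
The plan is to extract the identification from the coniveau (Gersten-type) spectral sequence for the Nisnevich cohomology of $\uA(n)$, and then invoke Rost's cycle complex machinery via Theorem \ref{thm:deglise} to identify the relevant cokernel with $\CH^n(X;A)$.

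First, since $n\geq 0$, the computation already established in the text yields $H^{2n,n}(X,\uA)\simeq H^{2n}_{\nis}(X,\uA(n))$, and one filters this group by codimension of support. This yields a coniveau spectral sequence
$$
E_1^{p,q}=\bigoplus_{x\in X^{(p)}} H^{q-p,\,n-p}(\kappa_x,\uA)\Longrightarrow H^{p+q,\,n}(X,\uA),
$$
whose $q=n$ row, for $p\leq n$, is identified with Rost's cycle complex for the cycle module $\widehat{\uA}_{*}$ associated to $\uA$ under the equivalence of \Cref{thm:deglise}. Combined with \Cref{prop:abelian-var-HI}, one gets
$$
E_1^{p,n}=\bigoplus_{x\in X^{(p)}}\KSM{n-p}(\kappa_x;A),
$$
and in particular $E_1^{n,n}=Z^n(X;A)$, while $d_1\colon E_1^{n-1,n}\to E_1^{n,n}$ agrees, by Rost's construction of the cycle complex from the homotopy module $\uA$, with the divisor map $\mathrm{div}_A$ displayed in the paragraph preceding the statement. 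Consequently $E_2^{n,n}=\coKer(\mathrm{div}_A)=\CH^n(X;A)$.

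Next, the antidiagonal $p+q=2n$ collapses to the single term $(n,n)$. Indeed, the general vanishing recalled earlier, $H^{a,b}(\kappa_x,\uA)=0$ whenever $b<0$ or $a>b$ (as $\kappa_x$ has Krull dimension $0$), kills all other summands: for $(p,q)=(n+a,n-a)$ with $a\geq 1$ the weight is $n-p=-a<0$, and for $(p,q)=(n-a,n+a)$ with $a\geq 1$ the bidegree is $(2a,a)$, violating $a\leq b$. The same vanishing shows that every outgoing differential $d_r\colon E_r^{n,n}\to E_r^{n+r,n-r+1}$ with $r\geq 2$ has target in a group whose $E_1$ is already zero (weight $-r<0$), so that $E_\infty^{n,n}=E_2^{n,n}$. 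Combining these facts yields the desired isomorphism $H^{2n,n}(X,\uA)\simeq \CH^n(X;A)$.

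The main technical point, and the only step requiring more than bookkeeping, is the identification of the differential $d_1$ in the coniveau spectral sequence with Rost's $\mathrm{div}_A$ from the cycle module $\widehat{\uA}_{*}$. This rests on the compatibility between the Gersten complex of the homotopy module $\uA$ (in the effective category $\DMe(k)$) and Rost's cycle complex of the corresponding cycle module, a compatibility which is built into the proof of \Cref{thm:deglise} and which one invokes rather than reproves. Once this is granted, the remaining vanishing and degeneration arguments are forced by the bidegree bounds, and the proposition follows immediately.
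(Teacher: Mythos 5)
Your proof is correct and follows exactly the route the paper intends: the paper's own argument is the one-line remark ``As a consequence of the coniveau spectral sequence together with the previous computation,'' and you have simply spelled out the degeneration. (One tiny omission: for $E_\infty^{n,n}=E_2^{n,n}$ one should also note that the incoming differentials $d_r\colon E_r^{n-r,n+r-1}\to E_r^{n,n}$ for $r\geq 2$ vanish, which holds since $E_1^{n-r,n+r-1}$ has bidegree $(2r-1,r)$ with $2r-1>r$, killed by the same dimension-zero vanishing you already invoke.)
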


\begin{rem}
\begin{enumerate}
\item By considering motives of the form $\uA \otimes \underline B$ in $\DMe(k)$,
 one can define exterior products of motivic cohomology with coefficients in abelian varieties.
\item Up to inverting the characteristic exponent of $k$,
 the preceding definition can be extended to singular $k$-schemes
 in a contravariant way. The corresponding extended cohomology theory satisfies $\cdh$-descent.
\item One can define a compactly supported version by using motives with compact support.
\end{enumerate}
\end{rem}

\begin{num}
The preceding definition is useful to state the computation of motivic cohomology
 of function fields given in \Cref{prop:gmot_tr1}.
\end{num}
\begin{prop}
Let $K$ be a number field,
 and $\bar C/K$ be a smooth projective curve with a base point $x_0 \in \bar C_{(0)}$.
 We let $L$ be the function field of $\bar C$, and $A$ be the jacobian of the pointed curve $(\bar C,x_0)$.

Then, there are canonical morphisms
$$
\pi_{L/K}^{n,i}:\Hmot^{n-2,i-1}(K,\uA) \rightarrow \Hmot^{n,i}(L).
$$
which are isomorphisms for any pair $(n,i)$ such that $i \geq n$, $n \in \ZZ-[1,3]$.

For $i>1$, the following map is an isomorphism:
$$
\Hmot^{-1,i-1}(K,\uA) \oplus \Hmot^{1,i}(K) \xrightarrow{(\pi^{1,i}_{L/K},\varphi^*)} \Hmot^{1,i}(L)
$$
Moreover, for $i \geq 3$, the following sequence of abelian groups is exact:
$$
0 \rightarrow \Hmot^{0,i-1}(K,\uA) \xrightarrow{\pi_{L/K}^{2,i}} \Hmot^{2,i}(L)
 \xrightarrow{\sum_x \partial_x}
\bigoplus_{x \in C_{(0)}} \Hmot^{1,i-1}(\kappa_x) \rightarrow \Hmot^{1,i-1}(K,\uA)
 \xrightarrow{\pi_{L/K}^{3,i}} \Hmot^{3,i}(L) \rightarrow 0.
$$
\end{prop}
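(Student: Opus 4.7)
The plan is to apply the contravariant functor $\Hom_{\pro\DM(K)}(-,\un(i)[n])$ to the homotopy exact sequence provided by \Cref{prop:gmot_tr1} for $\bar C$ and its affine part $C=\bar C-\{x_0\}$, namely the distinguished triangle in $\pro\DMgm(K)$
\[
\pprod_{x\in C_{(0)}}\bM(\kappa_x)\tw 1 \xrightarrow{\prod_x\partial_x} \bM(L) \xrightarrow{(\pi_0^L,\pi_1^L)} \un\oplus\uA,
\]
and to read the three stated assertions off the resulting long exact sequence. By construction, the map $\varphi^*$ will be induced by $\pi_0^L$, the map $\pi_{L/K}^{n,i}$ will be induced by $\pi_1^L$ (composed with the duality identification below), and the residues $\partial_x$ come from the first arrow of the triangle.

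The first step is to identify the three resulting groups. The middle one is $\Hmot^{n,i}(L)$ by \eqref{eq:cohm-gmot}. The left one, using the description of morphisms in pro-categories combined with the cancellation theorem (\Cref{num:recall_DM}) to absorb the twist $\tw 1=(1)[1]$, becomes
\[
\Hom_{\pro\DM(K)}\Big(\pprod_{x\in C_{(0)}}\bM(\kappa_x)\tw 1,\,\un(i)[n]\Big)\simeq\bigoplus_{x\in C_{(0)}}\Hmot^{n-1,i-1}(\kappa_x).
\]
For the right one, $\Hom(\un,\un(i)[n])=\Hmot^{n,i}(K)$ tautologically, while the rigidity of $\uA$ together with the canonical duality $\uA^\vee\simeq\uA(-1)[-2]$ from \Cref{lm:dual-abvar} yields
\[
\Hom(\uA,\un(i)[n])\simeq\Hom(\un,\uA(i-1)[n-2])=\Hmot^{n-2,i-1}(K,\uA),
\]
the right-hand side being the $\uA$-coefficient motivic cohomology introduced earlier in this subsection. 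Splicing these identifications into the long exact sequence attached to the triangle then produces, for every $(n,i)\in\ZZ^2$, a master exact sequence
\[
\cdots\to\Hmot^{n,i}(K)\oplus\Hmot^{n-2,i-1}(K,\uA)\xrightarrow{(\varphi^*,\pi_{L/K}^{n,i})}\Hmot^{n,i}(L)\xrightarrow{\sum_x\partial_x}\bigoplus_{x\in C_{(0)}}\Hmot^{n-1,i-1}(\kappa_x)\to\Hmot^{n+1,i}(K)\oplus\Hmot^{n-1,i-1}(K,\uA)\to\cdots
\]

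With this master sequence, all three assertions follow from vanishing arguments based on \Cref{thm:BorelB} applied to the number field $K$ and to each of its finite extensions $\kappa_x$ (themselves number fields): rationally, $\Hmot^{m,j}(F)=0$ for $m\geq 2$ when $F$ is a number field, together with the elementary vanishings $\Hmot^{m,j}(F)=0$ for $m<0$ or $m>j$, and $\Hmot^{m,j}(F,\uA)=0$ for $m<0$ (since $\uA$ is a sheaf and $\un(j)$ has nonnegative cohomological amplitude). For $n\in\ZZ-[1,3]$ with $i\geq n$, the four neighbouring terms of $\Hmot^{n,i}(L)$ in the master sequence all vanish (the analysis naturally splits into the cases $n\leq 0$ and $n\geq 4$, the latter using $n-1,n-2\geq 2$), so that $(\varphi^*,\pi_{L/K}^{n,i})$ is an isomorphism. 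The four-term statement for $n=1$, $i>1$ is the corresponding slice of the master sequence, using $\Hmot^{0,i-1}(\kappa_x)=\Hmot^{-1,i-1}(\kappa_x)=0$ for $i\geq 2$ to squash the flanking terms. Finally, the five-term statement for $n\in\{2,3\}$, $i\geq 3$ is obtained by splicing the $n=2$ and $n=3$ portions of the master sequence, using $\Hmot^{0,i-1}(\kappa_x)=0$ on the left and $\Hmot^{2,i-1}(\kappa_x)=0$ on the right (the latter valid because $i-1\geq 2$).

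The main obstacle is careful bookkeeping: one must track the twist shift $\tw 1=(1)[1]$, the degree and twist shifts produced by the duality $\uA^\vee\simeq\uA(-1)[-2]$, and the direction of the long exact sequence, so as to verify that the abstractly produced maps are indeed the $\varphi^*$, $\pi_{L/K}^{n,i}$ and $\sum_x\partial_x$ appearing in the statement. The rigidity of $\uA$ is crucial here: without it, $\Hom(\uA,\un(i)[n])$ could not be rewritten as $\uA$-coefficient motivic cohomology of $K$, and it is available because $A$ is the Jacobian of a smooth projective pointed curve (\Cref{lm:dual-abvar}).
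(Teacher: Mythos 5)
Your plan is exactly the paper's: apply $\Hom_{\pro\DM(K)}(-,\un(i)[n])$ to the homotopy exact sequence of \Cref{prop:gmot_tr1}, rewrite $\Hom(\uA,\un(i)[n])\simeq\Hmot^{n-2,i-1}(K,\uA)$ via the rigidity and the duality formula $\uA^\vee\simeq\uA(-1)[-2]$ of \Cref{lm:dual-abvar}, and prune the resulting long exact sequence using \Cref{thm:BorelB} for $K$ and the $\kappa_x$. Your identifications of the three terms and the master long exact sequence are correct, and the bookkeeping for the four-term isomorphism at $n=1$ and the five-term exact sequence at $n\in\{2,3\}$ is right.

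One small imprecision in the first assertion: for $n\in\ZZ\setminus[1,3]$, $i\geq n$, you establish that the pair $(\varphi^*,\pi^{n,i}_{L/K}):\Hmot^{n,i}(K)\oplus\Hmot^{n-2,i-1}(K,\uA)\to\Hmot^{n,i}(L)$ is an isomorphism because the two flanking terms $\bigoplus_x\Hmot^{n-2,i-1}(\kappa_x)$ and $\bigoplus_x\Hmot^{n-1,i-1}(\kappa_x)$ vanish; but the statement asserts that $\pi^{n,i}_{L/K}$ \emph{alone} is an isomorphism, which additionally requires $\Hmot^{n,i}(K)=0$. This further vanishing does follow from the results you already quote (negative degree for $n<0$, Borel for $n\geq 4$), so the gap is cosmetic, but you should say it explicitly. (Incidentally, at $(n,i)=(0,0)$ one has $\Hmot^{0,0}(K)=\ZZ\neq 0$, so $\pi^{0,0}_{L/K}$ is not an isomorphism there; this edge case appears to be an overreach in the proposition's stated range rather than a flaw in your argument.)
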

\begin{proof}
This follows directly from the homotopy exact sequence of \Cref{prop:gmot_tr1},
 with the duality result of \Cref{lm:dual-abvar}, and using the
 vanishing of motivic cohomology of the fields $K$ and $\kappa_x$ from \Cref{thm:Borel}.
\end{proof}

\begin{rem}
One can therefore interpret the \Cref{conj:vanishing} for the function field $L$
 in terms of properties of the cohomology groups $\Hmot^{n,i}(K,\uA)$,
 which are attached to an abelian variety over a number field.
 These cohomology groups may be more accessible.
\end{rem} 

\section*{Appendix}

\section{Reference guide on $\infty$-Categories}\label{sec:infty-cat}

\begin{num} 
The theory of $\infty$-categories provides a powerful foundation
 for both homotopy theory and homological algebra.
It enhances both derived and homotopy categories, offering a unified and flexible framework.
 The beauty of the theory is that it allows the direct transport of categorical concepts
 into the world of $\infty$-categories --- concepts that traditionally required derived constructions.
 In particular, stable $\infty$-categories provide an alternative framework
 to triangulated categories, in which one can resolve the traditional
 non-functoriality of cones --- a source of many complications.

We provide in this appendix a comprehensive overview the theory.
 Our goal is to offer a concise reference guide that equips the reader with a working knowledge of $\infty$-categories,
 sufficient to follow the constructions involved in the $\infty$-categorical presentation of Voevodsky's theory.
 At the same time, we include references for those interested in exploring the theory in greater depth.
 A first point of entry is the survey paper of Denis-Charles Cisinski
 \cite{CisB}, which is itself based on Jacob Lurie's fundamental treatise
 \cite{LurieHTT}.
 Concerning (symmetric) monoidal structures on $\infty$-categories,
 a good survey paper is \cite{GrothI}, based on Lurie's (unpublished) treatise \cite{LurieHA}.
 When needed, we also refer directly to Lurie’s original texts.

A final comment about the foundational issues of the theory.
 As in most works on higher homotopy theory,
 we assume the existence of an ambient universe in which all our
 ($\infty$-)categories reside. We will additionally need to have
 at least three universes: a smaller one to get a working notion
 of colimits and limits, and a bigger one to be able to consider
 objects such as the $\infty$-category of functors, or of $\infty$-categories
 (that belong to the initial universe). Throughout the remainder of the text,
 we will no longer explicitly mention these conventions when it is clear
 from the context which universe is being referred to.
\end{num}

\subsection{$\infty$-Categories and mapping spaces}

\begin{num}\textbf{Higher morphisms in $\infty$-categories.}
An $\infty$-category $\iC$ consists not only of objects and morphisms (sometimes called $1$-morphisms),
 but also of $n$-morphisms with $n>1$, which are to be thought of
 as equivalences between $(n-1)$-morphisms. 

There are several equivalent presentations of these objects, called \emph{models}.
 In these notes, we adopt Joyal's model of quasi-categories, which is both very practical and efficient.
 One defines an $\infty$-category 
 as a simplicial set $\iC$ that satisfies an additional property called
 the weak Kan condition: the canonical morphism of simplicial sets $\iC \rightarrow *$
 satisfies the right lifting property
 with respect to all inclusions $\Lambda_n^k \rightarrow \Delta^n$ of a horn for all $0<k<n$.\footnote{This
 property is depicted by the following diagram:
$$
\xymatrix@=10pt{
\Lambda_n^k\ar[r]\ar[d] & \iC\ar[d] \\
\Delta^n\ar[r]\ar@{-->}[ru] & {}*
}
$$
where solid arrows is the data of a commutative square,
 and the dashed arrow is required to exist so that all the diagram is commutative.
 See \cite[Def. 3.1, Rem. 3.2]{CisB} for more details.} 

The weak Kan condition ensures that composition of $1$-morphisms exists (\cite[Rem. 4.2]{CisB}),
 but is in general non-unique.
 It is well-defined up to a \emph{contractible} set of choices
 (see \cite[Rem. 8.13]{CisB}). This ambiguity in $\infty$-categories
 can be puzzling at first, but it is the core specificity of $\infty$-category theory.
 In any case, on defines the \emph{identity morphism} of an object $X$ in $\iC$
 as the degenerate $1$-simplex $\Id_X=s_0^0(X)$ with vertices $X$.
\end{num}

\begin{ex}\textbf{Nerve functor.}\label{ex:nerve}
Let $\catC$ be an ordinary category.
 One associates to it a simplicial set $\Nrv \catC$, called the \emph{nerve} of $\catC$:
 the vertices are the objects of $\catC$ and the $n$-simplices are collections of composable
 morphisms $X_0 \xrightarrow{f_0} X_1 \rightarrow \hdots \xrightarrow{f_n} X_n$.
 Then $\Nrv \catC$ does satisfy the weak Kan condition (see \cite[Prop. 2.1]{CisB}).

As another example, one can check that the simplicial set $\Delta^1$ is an $\infty$-category.
 Indeed, it is the nerve of the category with two objects $0$ and $1$ and only
 one non-identity arrow $0 \rightarrow 1$.
\end{ex}

\begin{ex}\textbf{Opposite $\infty$-category.}\label{ex:opposite-infty}
Let $\iC$ be an $\infty$-category.
 One defines the \emph{opposite $\infty$-category of $\iC$} as the simplicial set $\iC^{op}$
 with $n$-simplices given by $(\iC^{op})_n=\iC_n$ and with degeneracies $\tilde d_n^i$ and 
 faces $\tilde s_n^i$ given in terms of degeneracies and faces of $\iC$ by the formula:
$$
\tilde d_n^i:=d_n^{n-i}, \tilde s_n^i:=s_n^{n-i}.
$$
As an exercise, the reader can check that a simplicial set $\iC$
 is an $\infty$-category if and only if the simplicial set $\iC^{\op}$ defined
 above is so.
 Similarly, $\Nrv(\catC^{\op})=(\Nrv \catC)^{op}$.
\end{ex}

\begin{ex}\textbf{Localizations and derived categories.}\label{ex:loc-icat1}
Let $\catC$ be an ordinary category and let $W$ be a set of morphisms of $\catC$.
 Then there exists an $\infty$-category denoted by $\Nrv C[W^{-1}]$
 and called the $\infty$-categorical $W$-localization of $\catC$,
 with a morphism of simplicial sets $\Nrv \catC \rightarrow \Nrv C[W^{-1}]$,
 such that morphisms of $\catC$ become invertible
 (isomorphism in the sense to be defined shortly in \Cref{num:associated-htp})
 in the target $\infty$-category (see \cite[Th. 9.3]{CisB} for an even more general
 statement).\footnote{The most efficient construction is to use the
 so-called hammock localization of Dwyer and Kan, which to $(\catC,W)$
 associates a canonical simplicial category, and then to apply the simplicial
 nerve functor (\cite[Def. 1.1.5.5]{LurieHTT}).}

This construction allows to enhance the derived category of any abelian
 category $\catA$ to an $\infty$-category that we will denote by $\iDer(\catA)$,
 and call it the derived $\infty$-category associated with $\catA$:
 one defines $\iDer(\catA)$ as the $\infty$-categorical localization of
 $\Nrv \Comp(\catA)$, the nerve of the category of complexes in $\catA$, along
 quasi-isomorphisms.\footnote{Note for completeness that there is an alternative construction, called the \emph{dg-nerve},
 which uses the canonical differential graded (dg)
 enhancement of $\Comp(\catA)$ to define an $\infty$-category $\Nrv_{dg} \Comp(\catA)$. It is equivalent
 to the localization of the $\Nrv \Comp(\catA)$ with respect to homotopy equivalences
 of complexes, so that after localization along quasi-isomorphism, it gives
 an $\infty$-category which is quasi-equivalent to the one we denoted by $\iDer(\catA)$.
 See \cite[1.3.1.6]{LurieHA} for the construction, which is originally due to Hinich and Schechtman who call it
 the \emph{Sugawara functor}.}

Another fundamental example is the $\infty$-categorical localization
 of the category of topological spaces (or what amounts to the same, simplicial sets)
 along the weak (homotopy) equivalences.
 We will denote this $\infty$-category by $\iS$ and call it the
 $\infty$-category of spaces.\footnote{A common criticism against this terminology:
 the word space is over-used. Other possible terminologies are the $\infty$-category of
 \emph{$\infty$-groupoids}, or of \emph{anima}.}
\end{ex}

\begin{ex}\textbf{Spanned $\infty$-categories.}
Given an $\infty$-category $\iC$ and a set of objects $E \subset \iC_0$,
 one can always consider the sub-simplicial set $\iC_E$ of $\iC$
 whose $n$-simplices are given by the elements of $\iC$ whose iterated $n$-th degeneracies
 belongs to the subset $E$. It is not difficult to check that $\iC_E$ satisfies
 the weak Kan condition.\footnote{Use that $\iC$ satisfies this condition and
 that the simplicial sets $\Lambda_n^k$ and $\Delta^n$ have the same vertices.}

We call $\iC_E$ the full sub-$\infty$-category of $\iC$
 spanned by the set of objects $E$.
 This allows us, for example, to define the bounded (resp. bounded below/above)
 derived $\infty$-category $\iDer^\epsilon(\catA)$ where
 $\epsilon=b$ (resp. $+$, $-$).
\end{ex}

\begin{num}\textbf{Associated homotopy category.}\label{num:associated-htp}
Let $\iC$ be an $\infty$-category.
 Given morphisms $f,g:X \rightarrow Y$ in $\iC$, one says that $f$ and $g$ are \emph{homotopic}
 if there exists a $2$-morphism $H$ with degeneracies given by $d_2^2(H)=f$, $d_2^1(H)=g$, $d_2^0(H)=\Id_Y$.
 This is depicted as follows:
$$
\xymatrix@C=20pt@R=14pt{
& Y\ar^{\Id_Y}[rd] & \\
X\ar_f[rr]\ar^g[ru] & \ar@{=>}|/-4pt/H[u]& Y
}
$$
It can be shown that this defines an equivalence relation on the set of morphisms
 for $X$ to $Y$ in $\iC$. Its quotient set is called the set of homotopy classes of morphisms
 from $X$ to $Y$.
 Moreover, one can define a category $\Ho(\iC)$ called the \emph{associated homotopy category},
 whose objects are those of $\iC$, and morphisms are given by the homotopy classes of $1$-morphisms
 (see \cite[Prop. 4.3]{CisB}).  

We can now start to transpose the language of category theory:
 we say that a morphism $f$ in an $\infty$-category $\iC$ is an \emph{isomorphism}
 if it is an isomorphism in $\Ho(\iC)$.
\end{num}

\begin{ex}\label{ex:associated-htp}
\begin{enumerate}[leftmargin=*]
\item The homotopy category associated with the nerve $\Nrv \catC$ of an ordinary category is
 equivalent to the original category $\catC$.
\item Let $\iC$ be an $\infty$-category and $\catD$ be an ordinary category.
 Then, giving an $\infty$-functor $\iC \rightarrow \Nrv \catD$ is equivalent
 to giving an ordinary functor $\Ho(\iC) \rightarrow \catD$. This is a useful exercice
 to become familiar with the above definitions. See also \cite[Proposition 1.2.3.1]{LurieHTT}.
\item The homotopy category associated with the $\infty$-derived
 category of an abelian category $\catA$ is the usual derived category:
 $\Ho \iDer(\catA)=\Der(\catA)$.\footnote{The equality symbol
 is justified by the universal property of a localized category.}
\item An \emph{$\infty$-groupoid} is an $\infty$-category $\iC$ in which all morphisms are isomorphisms.
 This property is equivalent to the \emph{Kan condition} (\cite[Th. 6.3]{CisB}) {on the} corresponding simplicial set $\iC$.
\end{enumerate}
\end{ex}

\begin{num}\textbf{Mapping spaces in $\infty$-categories.}\label{num:mapping-icat}
There is a practical way to package 
 the information encoded by the higher morphisms of an $\infty$-category $\iC$.
 First recall that the category of simplicial sets $\sS$ admits an internal Hom, usually called
 the mapping space, that we will simply denote by $\uHom_{\sS}$ to avoid possible confusion.

Let $X$ and $Y$ be objects of $\iC$. One defines the \emph{mapping space} $\Map_{\iC}(X,Y)$ 
 of morphisms from $X$ to $Y$ as the sub-simplicial set of $\uHom(\Delta^1,\iC)$ generated by
 vertices $f:\Delta^1 \rightarrow \iC$ such that $f(0)=X$ and $f(1)=Y$.
 This is not immediately obvious, but one also gets a bijection compatible with compositions
 (see \cite[1.2.3.9]{LurieHTT}):
$$
\Hom_{\Ho\iC}(X,Y) \simeq \pi_0\big(\Map_{\iC}(X,Y)\big).
$$
Further, a nice feature of the theory is that the simplicial set
 $\Map_{\iC}(X,Y)$ satisfies the (usual) Kan condition; in other words,
 it is a \emph{Kan complex}.\footnote{Or equivalently a fibrant object for the Quillen model structure on simplicial sets.}
 This is a foundational property, originally proved by Joyal,
 and a consequence of the weak Kan property of $\iC$ (\cite[Th. 7.1]{CisB}).\footnote{In other words,
 $\Map_{\iC}(X,Y)$ can be considered as an $\infty$-groupoid,
 corresponding to the fact that $n$-morphisms for $n>1$ are to be considered as equivalences.}
\end{num}

\begin{rem}\label{rem:simplicial-cat}
One can associate to the $\infty$-category $\iC$ a \emph{simplicial category} $\iC_{rect}$
 (\emph{i.e.} a category enriched in simplicial sets) in such a way that the above
 Kan complex $\Map_{\iC}(X,Y)$ is weakly equivalent to the enriched morphisms in $\iC_{rect}$.
 More precisely, there is an equivalence between the homotopy category of $\infty$-categories
 and that of simplicial categories --- each equipped with an appropriate model
 category structure --- arising from a Quillen equivalence of model categories;
 see \cite[\textsection 12]{CisB}.
\end{rem}

\begin{ex}
Let $\catA$ be an abelian category. Given two objects $A$ and $B$ of $\catA$,
 and an integer $n\geq 0$, one deduces from \Cref{ex:associated-htp}(3) a bijection:
$$
\pi_0\big(\Map_{\iDer(\catA)}(A,B[n])\big) = \Ext_{\catA}^n(A,B).
$$
Note in particular that the set on the left-hand side
 acquires an abelian group structure.
 We will see in \Cref{num:stable} that this group structure can be obtained via an intrinsic property
 of $\infty$-categories, that of being \emph{stable}.
\end{ex}

\begin{ex}\label{ex:final-initial}
Let $\iC$ be an $\infty$-category.
 An object $X$ of $\iC$ will be said \emph{initial} (resp. \emph{final})
 if for any object $Y$ of $\iC$, the mapping space $\Map_{\iC}(X,Y)$ (resp. $\Map_{\iC}(Y,X)$)
 is contractible.

The sub-$\infty$-category $\iC_0$ of $\iC$ spanned by initial (resp. final) object
 is contractible in the sense that the canonical functor $\iC_0 \rightarrow *$
 to the final $\infty$-category is an equivalence.\footnote{To be shortly defined in \Cref{num:functors-icat};
 in fact, one readily sees from the definition that it is fully faithful and essentially surjective.}
 In other words,
 the space of choices of such objects is contractible.
\end{ex}

\subsection{$\infty$-Functors, equivalences and adjunctions}

\begin{num}\textbf{Functors and natural transformations.}\label{num:functors-icat}
A functor between two $\infty$-categories is a morphism of simplicial sets
 $F:\iC \rightarrow \iD$. Then it automatically induces a functor\footnote{As an exercise,
 the reader can check it respects the homotopy relation on morphisms defined in \Cref{num:associated-htp}.}
 of the associated homotopy categories:
$$
\Ho(F):\Ho(\iC) \rightarrow \Ho(\iD).
$$
Moreover, given any two objects $X$, $Y$ of $\iC$, one gets a canonical
 morphism of mapping spaces, which we denote by the same letter:\footnote{In fact,
 as noted in \Cref{rem:simplicial-cat}, the functor $F$ can be \emph{rectified}:
 one can find simplicial categories modeling $\iC$ and $\iD$, along with a functor between them
 that corresponds to $F$. This follows from the
 Quillen equivalence mentioned in the previous remark.}
\begin{equation}\label{eq:functorial-Map}
F:\Map_{\iC}(X,Y) \rightarrow \Map_{\iC}\big(F(X),F(Y)\big).
\end{equation}

Recall that there exists an internal Hom functor in the category of simplicial sets,
 that we will denote by $\uHom_{\sS}$.
 It can be checked that, because $\iD$ satisfies the weak Kan condition,
 the simplicial set $\uHom_{\sS}(\iC,\iD)$ satisfies the weak Kan condition (see \cite[Prop. 5.1]{CisB}).
 In other words, it is an $\infty$-category called
 the \emph{$\infty$-category of functors between $\iC$ and $\iD$},
 and simply denoted by $\Fun(\iC,\iD)$.

A \emph{natural transformation} of functors between $\infty$-categories is by definition a
 ($1$-)morphism in the $\infty$-category $\Fun(\iC,\iD)$.
 We will say that $F$ is:
\begin{itemize}
\item \emph{conservative} if for any morphism $f:X \rightarrow Y$ of $\iC$,
 $f$ is an isomorphism if and only if $F(f)$ is an isomorphism.
 This amounts to asking that the induced functor $\Ho(F):\Ho(\iC) \rightarrow \Ho(\iD)$
 is conservative \emph{i.e.} preserves and detects isomorphisms,
\item \emph{fully faithful} if for any objects $X$, $Y$ in $\iC$,
 the map \eqref{eq:functorial-Map} is a weak equivalence,
\item \emph{essentially surjective} if the functor $\Ho(F)$ is essentially surjective.
\item An \emph{equivalence of $\infty$-categories} if it is fully faithful and essentially
 surjective.
\end{itemize}
Note that one can define the \emph{essential image} of $F$ as the subcategory of $\iD$
 spanned by the objects $X$ of $\iD$ such that there exists an isomorphism 
 of the form $X \rightarrow F(Y)$. Then the $\infty$-functor $F$ is essentially surjective
 if its essential image is exactly $\iD$.
\end{num}

\begin{ex}\textbf{Comma categories.}\label{ex:comma} 
Let $\iC$ be an $\infty$-category and $X$ be an object of $\iC$.

Seeing $\Delta^1$ as an $\infty$-category (\Cref{ex:nerve}),
 we can consider the $\infty$-category $\Fun(\Delta^1,\iC)$ 
 of arrows in $\iC$. By evaluating at $0$ and $1$ respectively,
 one obtains the source and target $\infty$-functors: 
$$
S,T:\Fun(\Delta^1,\iC) \rightarrow \iC
$$ 
One defines the \emph{comma $\infty$-category} $\iC/X$ (resp. $X/\iC$) of objects over (resp. under) $X$
 as the sub-$\infty$-category spanned by objects $f$ of $\Fun(\Delta^1,\iC)$
 such that $T(f)=X$ (resp. $S(f)=X$).
\end{ex}

\begin{ex}\label{ex:loc-icat2}
Using \Cref{rem:simplicial-cat}, it is possible to define the $\infty$-category $\iCat$
 of (small) $\infty$-categories.\footnote{Formally, it is the $\infty$-categorical localization
 of the category of $\infty$-categories (as a full sub-category of the category of simplicial sets)
 along equivalences of $\infty$-categories.}

We can now give a more precise formulation of \Cref{ex:loc-icat1}.
 Given a pair $(\catC,W)$ as in that example,
 we consider the comma $\infty$-category $\iC/\iCat$ --- just defined ---
 of $\infty$-categories under $\iC$. We let $(\iC/\iCat)_W$ be the sub-$\infty$-category
 spanned by objects $F:\iC \rightarrow \iD$ such that for any $f \in W$, $F(f)$ is an isomorphism in $\iD$.

It is now a theorem (see \cite[7.1.3]{CisHC}) that the $\infty$-category $(\iC/\iCat)_W$ admits an initial object
 $\pi:\iC \rightarrow \iC[W^{-1}]$. This defines the $W$-localisation via a universal property,
 and shows that it is unique (up to a contractible set of choices).
\end{ex}

\begin{num}
An \emph{adjunction of $\infty$-categories} is a pair of functors:
$$
F:\iC \leftrightarrows \iD:G
$$
together with a natural transformation $\epsilon:F \circ G \rightarrow \Id_\iD$
 such that the composite map:
$$
\Map_{\iC}(X,G(Y)) \xrightarrow{F} \Map_{\iC}(F(X),F \circ G(Y))
 \xrightarrow{\epsilon_*} \Map_{\iC}(F(X),Y)
$$
is a weak equivalence.

As usual, we also say that $F$ (resp. $G$) is a left (resp. right) adjoint to $G$
 (resp. $F$).\footnote{In fact,
 it can be shown that if $F$ admits a left/right adjoint $G$, then $G$ is
 unique up to a contractible set of choices (\cite[Rem. 5.2.2.2]{LurieHTT}). In particular,
 we will speak of \emph{the} left/right adjoint of $F$.}
 In the literature on $\infty$-categories, one also frequently finds the notation $F \dashv G$.
 The map $\epsilon$ is called the \emph{co-unit} of the adjunction. In this situation,
 one can also define the unit of the adjunction as a natural transformation
 $\Id_{\iC} \rightarrow G \circ F$ with the expected properties (see \cite[Th. 10.7]{CisB}).
 According to this definition, it is clear that such an adjunction induces
 an adjunction of the associated homotopy categories:
$$
\Ho(F):\Ho(\iC) \leftrightarrows \Ho(\iD):\Ho(G).
$$
As in the ordinary categorical case, one obtains the following useful facts:
\begin{enumerate}
\item Let $(F,G)$ be adjoint $ \infty$-functors.
 Then the $\infty$-functor $F$ (resp. $G$) is fully faithful
 if and only if the unit map
 $\Id_{\iC} \rightarrow GF$ (resp. co-unit map $GF \rightarrow \Id_{\iD}$)
 is an isomorphism.\footnote{This is obvious by using the associated homotopy
 category.}
\item An $\infty$-functor $F$ is an equivalence if it admits a right (resp. left)
 adjoint such that both $F$ and $G$ are fully faithful (see \cite[Th. 7.7]{CisB}).
\end{enumerate}
\end{num}

\begin{ex}\label{ex:left-loc}
In practice, there is a much better behaved notion of localizations
 of $\infty$-categories than the general one of \Cref{ex:loc-icat2}.

Given an $\infty$-category $\iC$, a \emph{left localization} of $\iC$
 is an $\infty$-category $\iD$ together with an $\infty$-functor
 $\pi:\iC \rightarrow \iD$ which admits a fully faithful right adjoint $\nu$.\footnote{We follow the terminology
 of \cite[Def. 15.6]{CisB}, inspired by Bousfield's well-known notion of left localization of model categories.}

In fact, $(\iD,\pi)$ is then a localization of $\iC$,
 in the sense of \Cref{ex:loc-icat2}, with respect to the so-called
 \emph{$\pi$-equivalences}: the morphisms $f$ in $\iC$ such that $\pi(f)$ is an isomorphism.
 Moreover, the essential image of $\nu$ is spanned by the so-called \emph{$\pi$-local objects} of $\iC$,
 that is the objects $X$ of $\iC$ such that for any $f:A \rightarrow B$, the induced map
 of mapping spaces
$$
f_*=\Map_\iC(f,X):\Map_\iC(A,X) \rightarrow \Map_{\iC}(B,X)
$$
is a weak equivalence.
 Finally, the composite functor $L=\nu \circ \pi$ is called the associated
 localization functor.
\end{ex}

\subsection{Limits, colimits and presentable $\infty$-categories}

\begin{num}\textbf{Main properties of presentable $\infty$-categories}.\label{num:presentabl-ppties}
The purpose of this subsection is to introduce the notion of \emph{presentable $\infty$-categories}.
 In practice, this is the most important property of an $\infty$-category.
 To motivate this notion and assist the reader looking for a quick overview,
 we begin by listing the key properties of these particular $\infty$-categories
 --- at the cost of anticipating some definitions, particularly that of presentability itself 
 which will be given in \Cref{df:presentable}.
 We also provide references to the literature where each property is carefully established.
\begin{enumerate}
\item A presentable $\infty$-category $\iC$ admits all colimits and limits (see \cite[5.5.1.1, 5.5.2.4]{LurieHTT}).
\item Let $F:\iC\rightarrow \iD$ be a functor between presentable $\infty$-categories.
 Then $F$ admits a right (resp. left) adjoint if and only if it commutes with colimits
 (resp. with limits and is accessible). (See \cite[Cor. 5.5.2.9]{LurieHTT}.)
\item Let $\iC$ be a presentable $\infty$-category,
 and $W$ be a small set of morphisms of $\iC$.
 We introduce the following terminology:
\begin{itemize}
\item An object $P$ of $\iC$ is called \emph{$W$-local} if for any $W$-equivalence $f$,
 the induced map of spaces $\Map_\iC(f,P)$ is a weak equivalence.
\item A morphism $f$ of $\iC$ is called a \emph{$W$-equivalence} if
 for any $W$-local object $P$, the map of spaces $\Map_\iC(f,P)$ is a weak equivalence.
 We let $\bar W \supset W$ be the set of $W$-equivalences.
\end{itemize}
 Then the $\infty$-categorical localization $\iC[\bar W^{-1}]$
 is presentable and the canonical functor $\pi:\iC \rightarrow \iC[\bar W^{-1}]$ 
 is an accessible left localization in the sense of \Cref{ex:left-loc} (see \cite[5.5.4.20]{LurieHTT}).

 Letting $\nu$ be the right adjoint of $\pi$,
 the composite functor $L_W:=\nu \circ \pi:\iC \rightarrow \iC$ is called the \emph{$W$-localization functor}.
 Its essential image is spanned by the $W$-local objects.
\end{enumerate}
The last construction is crucial in many applications of $\infty$-category theory,
 as we can see in the case of Voevodsky's motives.
\end{num}

\begin{num}\label{num:(co)lim-ifty}
Let $\iI$ be an $\infty$-category. An $\infty$-functor $F:\iI \rightarrow \iC$
 can be regarded as an $\iI$-diagram in $\iC$.
 There exists a unique $\infty$-functor $c:\iI \rightarrow *$ to the final $\infty$-category,
 which induces the constant $\iI$-diagram functor:
$$
ct_\iI:\iC=\Fun(*,\iC) \rightarrow \Fun(\iI,\iC).
$$
One says that \emph{colimits} (resp. \emph{limits}) \emph{indexed by $\iI$-diagrams} exist in $\iC$
 if the $\infty$-functor $ct_\iI$ admits a left (resp. right) adjoint
$$
\underset {\iI} \colim \text{ resp. } \lim_{\iI}:\Fun(\iI,\iC) \rightarrow \iC .
$$
As usual in category theory, if $\iI$ is a discrete set $E$,
 one refers to these as \emph{coproducts} (resp. \emph{products}) indexed by $E$.

It is not reasonable to require that a given $\infty$-category
 admits limits/colimits indexed by any $\infty$-category.
 For this reason, when considering limits/colimits, we will always implicitly
 assume that the indexing $\infty$-category $\iI$ belongs to a (fixed) smaller universe;
 one says that $\iI$ is \emph{small}. In other words, \textbf{we implicitly only consider
 small colimits/limits}.
 We will also say that an $\infty$-category is \emph{finite} if it is equivalent
 to a simplicial set with only finitely many non-degenerate simplices.

 One says that $\iC$ \emph{admits all (resp. finite) colimits/limits}
 if it admits limits/colimits indexed by any (resp. any finite) $\infty$-category.

Let $F:\iC\rightarrow \iD$ be a functor between $\infty$-categories
 which admits all/finite colimits/limits.
 Let us denote by $F_*:\Fun(\iI,\iC) \rightarrow \Fun(\iI,\iD)$
 the functor given by left composition with $F$.
 We will say that \emph{$F$ commutes with all colimits/limits (resp. finite colimits/limits)}
 if for any (resp. any finite) $\infty$-category $\iI$, the canonical natural transformations
 (obtained by adjunction)
$$
\underset{\iI}\colim \circ F_* \rightarrow F \circ \underset{\iI}\colim \text{\quad / \quad} F \circ \lim_\iI \rightarrow \lim_\iI \circ F_*
$$
are isomorphisms.

To mention the last set-theoretic condition required for the main definition of this subsection,
 we say that the $\infty$-functor $F$ is \emph{accessible}
 if there exists a regular cardinal $\kappa$ which belongs to the ambient universe
 such that $F$ commutes with $\kappa$-filtered colimits (see \cite[15.2]{CisB}).
 An \emph{accessible left localization} is a left localization 
 $\pi:\iC \rightarrow \iD$ as in \Cref{ex:left-loc} whose left adjoint $L$
 is accessible.
\end{num}

\begin{rem}\emph{Warning.}\label{rem:colim-infty-classical}
One should be careful that the colimit (resp. limit) of a diagram $F:\iI \rightarrow \iC$,
has nothing to do with the colimit (resp. limit) of the induced functor $\Ho(F):\Ho(\iI) \rightarrow \Ho(\iC)$
 on the homotopy categories.
 It is in fact much closer to what is called a homotopy colimit (resp. limit) in the framework of model
 categories (see \ref{num:model}). This terminology has largely fallen out of use
 in the modern $\infty$-categorical framework.

However, there are two notable exceptions to this warning.
First, if $\iC$ is the nerve of an ordinary category $\catC$,
 then colimits (resp. limits) in $\iC$ correspond exactly to colimits and limits
 in $\catC$, in the classical categorical sense.
 Second, when the indexing category
 $\iI$ has no non-identity morphisms, colimits (resp. limits) become coproducts (resp. products) and
 they agree when computed in both the $\infty$-category $\iC$
 and its homotopy $\infty$-category $\Ho(\iC)$.

In any case, it is usually clear from the context in what framework (ordinary, or $\infty$-categorical)
 one uses the terminology colimit (resp. limit). 
 \end{rem}

\begin{ex}\label{eq:pull-push}
Let $\iC$ be an $\infty$-category that admits finite limits (resp. colimits).
 Then one calls \emph{pullbacks} (resp. \emph{pushouts}) the limits (resp. colimits)
 indexed by the nerve of the category depicted as follows:
$$
\xymatrix@=10pt{
& \bullet\ar[d]  & \text{resp.} & \bullet \ar[d]\ar[r] & \bullet \\
\bullet\ar[r] & \bullet  & & \bullet &
}
$$
An $\infty$-category $\iC$ admits all (resp. finite) limits/colimits 
 if it admits all (resp. finite) products/coproducts and all pullbacks/pushouts
 (see \cite[11.8, 11.9]{CisB}).
\end{ex}

Recall that the opposite $\infty$-category has been defined in \Cref{ex:opposite-infty}.
 Given a small $\infty$-category $\iC_0$, the \emph{$\infty$-category of presheaves on $\iC_0$}
 is defined, as expected, as the $\infty$-category $\iPSh(\iC_0):=\Fun(\iC_0^{\op},\iS)$.
 We can now state the main definition of this subsection.
\begin{df}\label{df:presentable}
An $\infty$-category $\iC$ will be called \emph{presentable} if there exists
 a small $\infty$-category $\iC_0$ such that $\iC$
 is an accessible left localization of the $\infty$-category of presheaves on $\iC_0$
 (see also \cite[15.6, 15.9]{CisB}).

A morphism (also called \emph{left functor}) of presentable $\infty$-categories will be an $\infty$-functor
 which preserves colimits --- or equivalently admits a right adjoint, \Cref{num:presentabl-ppties}(1).
 We define the $\infty$-category $\iCatP$ of presentable $\infty$-categories
 as the non-full sub-$\infty$-category of $\iCat$ spanned by $\infty$-categories and their morphisms.
\end{df}

\begin{ex}\label{ex:presentable-htp}
The $\infty$-category of spaces $\iS$ (\Cref{ex:loc-icat1}) is presentable ---
 given the above definition, this is tautological as $\iS$ is the $\infty$-category of presheaves
 on the final $\infty$-category $*$.

As a further example, one can apply the localization construction explained in \Cref{num:presentabl-ppties}
 to describe the original construction of Bousfield localization.
 Let $R$ be a ring. One says that a morphism $f:X \rightarrow Y$ in $\iS$ is an $R$-equivalence
 if the induced map $f_*:H_*(X,R) \rightarrow H_*(Y,R)$ is an isomorphism.
 Then one gets an $R$-localization $\infty$-functor $L_R:\sS \rightarrow \sS$ whose essential image
 is spanned by $R$-local spaces, and which can be identified with $\iS[W_R^{-1}]$.
 As an example, $L_\ZZ(X)=X^+$ can be described by the Quillen $+$-construction (see \cite{HoyoisQ})
 and $L_\QQ(X)$ --- for a simply connected space --- can be described by either the Sullivan or the Quillen
 model (see \cite{Ivanov}).
\end{ex}

\begin{ex}\textbf{$\infty$-topos.}\label{num:infty-topos}
In fact, the main motivation of \cite{LurieHTT} was to introduce
 the $\infty$-categorical analogue of the theory developed in SGA4.

Abstractly, an $\infty$-topos is a particularly nice type of presentable $\infty$-category.
 Explicitly, an $\infty$-category $\iC$ is an \emph{$\infty$-topos} if
 there exists a small $\infty$-category $\iC_0$
 such that $\iC$ is an accessible left localization of the $\infty$-category $\iPSh(\iC_0)$
 and in addition, the canonical functor: $\pi:\iPSh(\iC_0) \rightarrow \iC$ commutes with \emph{finite} limits
 (see again \cite[15.9]{CisB}).

Let us be more concrete and consider a Grothendieck site $\mathrm S$,
 with a topology $t$ generated by a pre-topology:
 an ordinary category of geometric objects $X$, with a collection
 of covers $(p_i:V_i \rightarrow X)_{i \in I}$ satisfying the axioms 
 of \cite[II, Def. 1.3]{SGA4}.
 Then one can consider the localization of the $\infty$-category $\iPSh(\Nrv \mathrm S)$
 with respect to augmented \v Cech resolution associated with a cover $(p_i)_{i \in I}$ as above:
$$
\xymatrix@=30pt{
\hdots &
\bigsqcup_{(i,j) \in I^2} W_i \ar@<4pt>[r]\ar@<-4pt>[r]&
\bigsqcup_{i \in I} W_i\ar[l]\ar^-p[r] & W
}
$$
More precisely, this diagram defines by the Yoneda embedding a morphism in $\iPSh(\Nrv \mathrm S)$:
$$
\check C_\bullet(W/X) \xrightarrow p W
$$
 --- the source is indeed a simplicial presheaf --- and one localizes
 the $\infty$-category $\iPSh(\Nrv \mathrm S)$ with respect to these latter morphisms
 to get the $\infty$-category of sheaves $\iSh_t(\mathrm S)$.
 One formally deduces that the canonical functor
 $a_t=\pi:\iPSh(\mathrm S) \rightarrow \iSh_t(\mathrm S)$ is accessible
 and a left localization (as stated in \Cref{num:presentabl-ppties}).
 Using the properties of the pre-topology $t$, one deduces further that $a_t$
 also commutes with finite limits, as required.
 \end{ex}

\begin{ex}\label{ex:presentable-derived}
Another important example for us comes from abelian categories.
 Assume that $A$ is a Grothendieck abelian category.
 Then the associated derived $\infty$-category $\iDer(A)$ is presentable.
 This is not obvious given the description of \Cref{ex:loc-icat1},
 but see \cite[1.3.5.21]{LurieHA}.
\end{ex}

\begin{num}\textbf{The link with model categories}.\label{num:model}
 For a long time, model categories have been the most efficient way to describe
 both derived categories and homotopy categories from algebraic topology.
 The efficiency of $\infty$-categories comes from the language that their axiomatic
 allows one to develop, as exemplified previously. On the other hand, the link
 with model categories is very tight.

Let $\iM$ be a (closed) model category (\cite{QuiMod}), equipped with its three sets of morphisms:
 weak equivalences $W$, cofibrations $Cof$ and fibrations $Fib$.
 Then one associates to $\iM$ the $\infty$-category $\iM_\infty=\iM[W^{-1}]$ obtained by inverting
 its weak equivalences, as in \ref{ex:loc-icat1}.
 It can be shown that this association defines a Quillen equivalence between
 the so-called \emph{combinatorial} model categories and the presentable $\infty$-categories
 (see \cite{Pavlov}).
\end{num}

\subsection{Stable $\infty$-categories, triangulated categories and $t$-structures}

\begin{num}\label{num:infty-additive}
The theory of stable $\infty$-categories
 is a convenient replacement for that of triangulated categories.
 One of the appealing features is that being stable is a property,
 whereas being triangulated is a structure.

To facilitate the definition, let us introduce further notation
 in a given $\infty$-category $\iC$.
 A \emph{zero object} is an object which is both initial and final
 (\Cref{ex:final-initial}). Such an object, unique up to a contractible space of choices,
 is conventionally denoted by $0$. Note that a zero object allows us to define
 zero maps $A \rightarrow 0 \rightarrow B$ between arbitrary two objects,
 also denoted by $0$ following the usual abuse of notation.

One says that an $\infty$-category $\iC$ is \emph{additive} if its homotopy category is additive.
 This amounts to asking that $\iC$ admits finite producs and coproducts, a zero object,
 and that the for all objects $M$ and $N$, the canonical map
$$M \sqcup N \xrightarrow{\begin{pmatrix}\Id_M & 0 \\ 0 & \Id_N\end{pmatrix}} M \times N$$
 is an isomorphism.\footnote{These conditions can be checked indifferently in $\iC$
 or in $\Ho(\iC)$. See \Cref{rem:colim-infty-classical}.}

A \emph{commutative square} in an arbitrary $\infty$-category $\iC$
 is a functor $\Delta:\Box \rightarrow  \iC$
 where $\Box$ is the nerve of the obvious category. One writes suggestively
$$
\xymatrix@=10pt{
M\ar^f[r]\ar_h[d]\ar@{}|\Delta[rd] &N\ar^k[d] \\
P\ar_g[r] & Q.
}
$$ 
Assuming that $\iC$ admits finite limits and colimits,
 we say that the square $\Delta$ is \emph{cartesian} (resp. \emph{cocartesian})
 if the canonical morphism $M \rightarrow N \times_Q P$
 (resp. $N \sqcup_M P \rightarrow Q$) to the obvious pullback (resp. pushout),
 as defined in \Cref{eq:pull-push}, is an isomorphism.

The following definition of stability is striking in its simplicity
 (see \cite[Def. 1.1.1.9, Prop. 1.1.3.4]{LurieHA}).
\end{num}
\begin{df}
We say that an $\infty$-category $\iC$ is \emph{stable} if it admits finite limits and colimits,
 a zero object, and if any commutative square $\Delta$ of $\iC$ is cartesian if and only if it is cocartesian.
\end{df}

\begin{num}\label{num:stable}
In order to state the fundamental property of a stable $\infty$-category $\iC$,
 we introduce some terminology.

Let $M$ be an object of $\iC$. One defines the suspension (resp. loop) object $\Sigma M$ (resp. $\Omega M$)
associated with $M$ by the following cocartesian (resp. cartesian square):
$$
\xymatrix@=10pt{
0\ar[r]\ar[d] & M\ar[d] & \text{resp.} & \Omega M\ar[r]\ar[d] & M\ar[d] \\
M\ar[r] & \Sigma M && M\ar[r] & 0.
}
$$
Consider a sequence of composable maps in the $\infty$-category $\iC$:
\begin{equation}\tag{$\sigma$}
M \xrightarrow f N \xrightarrow g P
\end{equation}
that fits into a commutative square of the form: 
$$
\xymatrix@=10pt{
M\ar^f[r]\ar[d]\ar@{}|\Delta[rd] & N\ar^g[d] \\
0\ar[r] & P.
}
$$
Note that the existence of the commutative square $\Delta$
is equivalent to saying that $g \circ f = 0$ in the homotopy category.\footnote{Moreover,
specifying the commutative square $\Delta$
is equivalent to giving a composite $h$ of $g$ and $f$,
and a homotopy between $h$ and the zero map $0: M \rightarrow P$.}
One says that the sequence $(\sigma)$ is a \emph{homotopy exact sequence} if in addition,
the square $\Delta$ is cartesian --- or equivalently cocartesian.
One can observe at this point that in this case,
the square $\Delta$ is unique up to a contractible space of choices.
One also says that $P$ (resp. $M$) is the cone or cofiber (resp. fiber) of $f$ (resp. $g$).

According to the definition of the suspension object, 
there exists a unique map $\delta: P \rightarrow \Sigma M$
which fits into the following commutative diagram:
$$
\xymatrix@=10pt{
M\ar^f[rr]\ar[dd]\ar|/-2pt/{\Id_M}[rd] && N\ar|/-6pt/g[dd]\ar[rd] \\
& M\ar[rr]\ar[dd] && 0\ar[dd] \\
0\ar[rr]\ar[rd] && P\ar|/-4pt/{\delta}[rd] \\
& 0\ar[rr] && \Sigma M
}
$$
where the rear and front squares are cartesian.
It is called the \emph{boundary operator} associated with the exact sequence $(\sigma)$.

We now arrive at the main, fundamental theorem of the theory of stable $\infty$-categories,
due to Lurie (see \cite[1.1.2.14]{LurieHA}). Its proof is primarily (though not exclusively) based 
on the universal property of pullbacks in $\infty$-categories. It is particularly
satisfying that all four axioms of Verdier's triangulated categories,
including the choices of signs and the octahedral axiom,
are direct consequences of the simple definition of stable $\infty$-categories.
\end{num}
\begin{thm}\label{thm:stable->triangulated}
Let $\iC$ be a stable $\infty$-category.
 Then it is additive and its homotopy category $\Ho(\iC)$ has a unique triangulated structure
 such that the suspension functor is induced by $\Sigma$ and whose distinguished triangles
 are given by the image of the exact sequences with their boundary operator.
 \end{thm}
Note moreover that the suspension and loop operations
 on a stable $\infty$-category $\iC$ define an adjoint pair of auto-equivalences $(\Sigma,\Omega)$
 of $\iC$. In other words, the loop object functor corresponds to the desuspension.

\begin{rem}\label{rem:exactness-map}
Let $\iC$ be a stable $\infty$-category.
 The mapping space $\Map_{\iC}(M,N)$ is then an infinite loop-space,
 as demonstrated by the isomorphism
$$
\Map_{\iC}(M,N)=\Map_{\iC}(\Omega M,\Omega N)=\Omega\Map_{\iC}(\Omega M,N).
$$
In particular, $\pi_0\Map_{\iC}(M,N)$ is an abelian group,
 corresponding to the additivity of the category $\Ho(\iC)$.
 In fact, for stable $\infty$-category, one has a rectification procedure
 analogous to \Cref{rem:simplicial-cat} where one can replace a stable $\infty$-category with
 a category enriched over spectra in the classical sense of algebraic topology
 (see \cite[Rem. 4.8.2.20]{LurieHA}).
 In other words, the reader can freely assume that $\Map_{\iC}(M,N)$ is a
 spectrum.\footnote{We will make it precise when we consider a mapping space
 in a stable $\infty$-category as a spectrum.}

Given any exact sequence $M \rightarrow N \rightarrow P$ of $\iC$,
 and any object $Q$, one deduces fibration sequences of mapping spaces
 (by their exactness properties):
\begin{align*}
\Map_{\iC}(P,Q) &\rightarrow \Map_{\iC}(N,Q) \rightarrow \Map_{\iC}(M,Q) \\
\Map_{\iC}(Q,M) &\rightarrow \Map_{\iC}(Q,N) \rightarrow \Map_{\iC}(Q,P) 
\end{align*}
Applying the functor $\pi_0$, one recovers the long exact sequence corresponding to
 the triangulated structure of $\Ho(\iC)$.
\end{rem}

\begin{ex}\label{ex:GAb-stable-infty}
The derived $\infty$-category $\iDer(\catA)$ of a Grothendieck abelian category $\catA$ is stable
 (\cite[1.3.5.9]{LurieHTT}).
 Moreover, the triangulated structure on $\Ho(\iDer(\catA))$ coincides
 with the Verdier triangulated structure of $\Der(\catA)$ through the identification
 of \Cref{ex:loc-icat1}(2).\footnote{This follows from the description of $\iDer(\catA)$
 as the localization of the dg-nerve of the dg-category of chain complexes on $\catA$,
 \cite[1.3.5.13]{LurieHTT}.}
\end{ex}

\begin{num}\textbf{Exact functors}.
Let $F:\iC \rightarrow \iD$ be a functor between $\infty$-categories.
 Then the following conditions are equivalent (\cite[1.1.4.1]{LurieHTT}):
\begin{enumerate}
\item $F$ commutes with finite limits.
\item $F$ commutes with finite colimits.
\item $F$ respects exact sequences.
\end{enumerate}
If these conditions hold, one says that $F$ is exact.
\end{num}

\begin{ex}
Let $\iC$ be a stable and presentable $\infty$-category.

Let $W$ be a set of morphisms of $\iC$, and $\iC[W^{-1}]$ be the localization of $\iC$ at $W$.
 We have seen in \Cref{num:presentabl-ppties}, point (3), that this is automatically a left Bousfield
 localization so that we have an adjunction of $\infty$-categories:
$$
\pi:\iC \rightarrow \iC[W^{-1}]:\nu
$$
such that $\nu$ is fully faithful. Given the definition of $W$-local objects,
 and \Cref{rem:exactness-map}, one easily deduces that $W$-local objects are stable under
 extensions and suspensions. As $\nu$ is fully faithful, one deduces that
 $\iC[W^{-1}]$ is a stable $\infty$-category, and that both $\pi$ and $\nu$
 are exact functors.
 In particular, the $W$-localization functor $L_W$ is exact. 

We retain from these discussions that the localization of a presentable
 and stable $\infty$-category with respect to any set of morphisms is
 again a presentable and stable $\infty$-category, and is equivalent to the full
 sub-$\infty$-category spanned by the $W$-local objects.
\end{ex}

Thus the $\infty$-categories which are both presentable and stable enjoy very good properties.
\begin{df}\label{df:icat-stable}
We let $\iCatS$ be the sub-$\infty$-category of the $\infty$-category $\iCatP$
 spanned by those presentable $\infty$-categories that are both presentable and stable.
\end{df}
As morphisms of $\iCatP$, the so-called left functors, are required to commute with arbitrary limits,
 morphisms of presentable stable $\infty$-categories are in particular exact. 

\begin{df}\label{df:infty-t-struct}
A \emph{$t$-structure} on a stable $\infty$-category $\iC$ is the data of
 a pair of full sub-$\infty$-categories $(\iC_{\geq 0},\iC_{<0})$
 such that the pair $(\Ho\iC_{\geq 0},\Ho \iC_{<0})$ defines a $t$-structure
 on $\Ho(\iC)$.
\end{df}
In other words, a $t$-structure on an $\infty$-category is nothing else
 than a $t$-structure on its homotopy category. One sometimes calls
  a stable $\infty$-category equipped with a $t$-structure a $t$-$\infty$-category.

\begin{ex}\label{ex:t-structures-icat}
 These examples will be used in the text.
\begin{enumerate}
\item Given a Grothendieck abelian category $\catA$,
 one obviously gets a canonical $t$-structure on $\iDer(\catA)$, corresponding
 to the canonical $t$-structure on $\Der(\catA)$.
\item Let $\iC$ be a presentable stable $\infty$-category.
 Given a set of objects $P$ of $\iC$, we let $\langle P \rangle_+$ be the full sub-$\infty$-category of $\iC$
 that contains $P$ and is stable under extensions, positive suspensions and coproducts.
 Then there exists a unique $t$-structure $(\iC^P_{\geq 0},\iC^P_{<0})$ on $\iC$
 whose homologically positive objects are exactly $\iC^P_{\geq 0}=\langle P \rangle_+$.

 This is a classical construction,
 for example when the triangulated category $\Ho(\iC)$ is compactly generated
 (see \cite[Th. 1.2.6]{BD1}). The main point here
 is that one gets the homologically non-negative functor $\tau_{\geq 0}^P$
 as the right adjoint of the canonical functor $\nu_+:\langle P \rangle_+ \rightarrow \iC$,
 simply by applying \Cref{num:presentabl-ppties}(2).
\end{enumerate}
\end{ex}

\subsection{Monoidal $\infty$-categories}

\begin{num}
The definition of a symmetric monoidal category is much more involved
 that its counterpart for ordinary categories. This can be explained as a lot
 of the necessary structure is given by isomorphisms, which are to be described
 coherently with the higher structure of an $\infty$-category.
 However, such a description has been worked out in topology, via the general theory
 of \emph{operads} and more specifically of Segal's $\Gamma$-spaces.
 It leads to the more general notion of \emph{commutative algebra},\footnote{Note that a more precise terminology would be $E_\infty$-object.}
 in a general $\infty$-category with finite product $\iC$.

We first introduce the category $\Fin$ --- which is actually the opposite of the category $\Gamma$
 defined by Segal --- described as follows:
\begin{itemize}
\item objects are defined to be the sets $n_*:=\{0,...,n\}$ pointed by the element $0$, for an integer $n>0$,
\item morphisms are the pointed maps.
\end{itemize}
Important examples are defined by the so-called \emph{inert maps}, indexed by integers $1 \leq i \leq n$:
$$
\alpha_n^j:n_* \rightarrow 1_*, i \mapsto \delta_n^i.
$$
\end{num}

\begin{df}
A \emph{commutative algebra} in $\iC$ is an $\infty$-functor
$$
M^\otimes:\Nrv \Fin \rightarrow \iC
$$
such that for any integer $n>0$, the morphism
$$
M^\otimes(n_*) \xrightarrow{\tau_n=\prod_{1 \leq j \leq n} (\alpha_n^j)_*} \big(M^\otimes(1_*)\big)^n
$$
where the right-hand side denotes the $n$-fold product in $\iC$, is an isomorphism.

One defines the $\infty$-category of commutative algebras in $\iC$ as the
 sub-$\infty$-category of $\Fun(\Nrv \Fin,\iC)$ spanned by the commutative algebras.
\end{df}

\begin{num}\label{num:abusive-comm}
We can abusively identify such a commutative algebra with the object $M=M^{\otimes}(1_*)$ equipped
 with the multiplication maps
$$
\mu_n:M^n=\big(M^\otimes(1_*)\big)^n \xrightarrow{\tau_n^{-1}} M^\otimes(n_*) \xrightarrow{c_{n*}} M^\otimes(1_*)=M
$$
where $c_n:n_* \rightarrow 1_*$ is the pointed application which sends $1 \leq i\leq n$ to $1$.
 One can deduce from the universal property of products symmetry and associativity isomorphisms as expected.
\end{num}

\begin{ex}
\begin{enumerate}
\item Let $\mathrm{Set}$ be the category of sets.
 Then a commutative algebra in $\Nrv \mathrm{Set}$ is (equivalent to the data of) a commutative monoid.
 Similarly, a commutative algebra in the nerve of the category of abelian groups is a commutative ring.
\item Let $\mathscr Sp$ be the $\infty$-category of spectra from algebraic topology.\footnote{It can
 be obtained from the usual model category of spectra using \Cref{num:model}.
 Or more directly from the $\infty$-category $\iS$ of spaces by formal
 inversion of the suspension functor: this is known as the stabilization
 of $\iS$ (see \cite[\textsection 1.4.3]{LurieHA}).}
 Then a commutative algebra object in $\mathscr Sp$ is what is usually called an \emph{$E_\infty$-spectrum}.
\end{enumerate}
\end{ex}

We now arrive at the central definition of this subsection. We have taken the point of view of Joyal,
 to use $\infty$-functors rather than cofibered $\infty$-categories. Both point of views are equivalent:
 see \cite{GrothI}, Definition 4.4 and Remark 4.5.
\begin{df}
A \emph{symmetric monoidal} (resp. \emph{presentable and stable symmetric monoidal}) \emph{$\infty$-category} is a commutative
 algebra in the $\infty$-category $\iCat$ (resp. $\iCatS$).

We let $\iCatM$ (resp. $\iCatMS$) be the $\infty$-category consisting of these particular commutative algebras,
 as defined previously. A $1$-morphism in $\iCatM$ (resp. $\iCatMS$) will be called a monoidal left functor.
\end{df}

\begin{num}\label{num:explicit-prstmon-icat}\label{num:ifty-explicit-otimes}
Let us make explicit the above notion, in the case of a presentable and stable symmetric monoidal $\infty$-category $\iC^\otimes$.
 If we use the abusive description of \Cref{num:abusive-comm}, it corresponds to a 
 presentable and stable $\infty$-category $\iC=\iC^\otimes(1_*)$ equipped with $\infty$-functors,
 the $n$-fold multiplication maps:
$$
\mu_n:\iC^n \rightarrow \iC
$$
where the left-hand side is the $n$-fold product of presentable and stable $\infty$-categories.
 These multiplication maps are required to satisfy suitable commutativity and associativity axioms.

In particular, we can write $\otimes_{\iC}=\mu_2$ and call it the tensor product
 associated with $\iC^\otimes$. For any object $X$ in $\iC$, one can consider the
 $\infty$-functor:
$$
\Sigma_X:\iC \rightarrow \iC, Y \rightarrow X \otimes Y=\mu_2(X,Y).
$$
According to our choice of morphisms in $\iCatS$ (\Cref{df:icat-stable}), this is a left functor (\Cref{df:presentable})
 so that it automatically admits a right adjoint $\Omega_X=\uHom_{\iC}(X,-)$ --- according to \Cref{num:presentabl-ppties}(2).
 One deduces a bifunctor:
$$
\uHom:\iC \times \iC \rightarrow \iC
$$
which is exact.

As the associated homotopy category functor $\Ho$ commutes with products,
 one deduces that the associated homotopy category $\Ho(\iC)$ is symmetric monoidal, and moreover closed.
 Besides, starting from \Cref{thm:stable->triangulated}, $\Ho(\iC)$ is even a triangulated monoidal category.
\end{num}

\begin{ex}\label{ex:monoidal-icat}
An $\infty$-category $\iC_0$ which admits finite products
 can be endowed with a symmetric monoidal $\infty$-category structure,
 with associated tensor product given by the cartesian product $\times$.
 See \cite[\textsection 2.4.1]{LurieHA}.

Using the so-called \emph{Day convolution product}, Lurie further proved
 (see \cite[after Def. 2.1]{NS}, or directly \cite[Cor. 4.8.1.12]{LurieHA})
 that the $\infty$-category of presheaves $\iPSh(\iC_0)$ admits a symmetric monoidal structure
 such that the Yoneda embedding
$$
\gamma:\iC_0 \rightarrow \iPSh(\iC_0)
$$
is monoidal. Beware that this monoidal structure is in general different from the one coming from
 the cartesian product on $\iPSh(\iC_0)$, except when the $\infty$-category $\iC_0$ admits finite products.
\end{ex}

\begin{num}\textbf{Tensor invertible objects}.\label{num:otimes-inversion}
Let $\iC$ a presentable symmetric monoidal $\infty$-category and $X$ be an arbitrary object.
 One says that $X$ is $\otimes$-invertible if the $\infty$-functor $\Sigma_X$, defined in \Cref{num:ifty-explicit-otimes},
 is an equivalence of $\infty$-categories.

Marco Robalo has described in \cite[\textsection 2.1]{Robalo} 
 a universal procedure to $\otimes$-invert the object $X$.
 Indeed, he shows that the sub-$\infty$-category of the comma $\infty$-category $\iC/\iCatM$ (see \Cref{ex:comma})
 spanned by the monoidal $\infty$-functors $F:\iC \rightarrow \iD$ such that $F(X)$ is $\otimes$-invertible
 admits an initial object --- combine Proposition 2.1 and Proposition 2.9(1).
 We denote such an initial object (well-defined up to a contractible set of choices) by
 $\Sigma_X^\infty:\iC \rightarrow \iC[X^{-1}]$.

If, in addition, the object $X$ is symmetric (see \cite[Def. 2.16]{Robalo}),
 then the monoidal $\infty$-category $\iC[X^{-1}]$ can be described as the \emph{spectrum objects
 relative to $X$}; see \emph{loc. cit.} Corollary 2.22.
 In other words, $\iC[X^{-1}]$ is the homotopy colimit in the $\infty$-category $\iCatM$ 
  of the following tower of presentable monoidal $\infty$-categories:
$$
\iC \xrightarrow{\Sigma_X} \iC \xrightarrow{\Sigma_X} \hdots
$$
where $\Sigma_X$ was defined in \Cref{num:ifty-explicit-otimes}.
 In this construction, the assumption that $X$ is symmetric is used via \emph{loc. cit.}
 Theorem 2.14.

This implies in particular that whenever $\iC$ is stable and $X$ is symmetric,
 the $\infty$-category $\iC[X^{-1}]$ is stable and $\Sigma_X^\infty$ is exact.
\end{num}

\begin{ex}\label{ex:stabilization}
Following \cite[\textsection 1.4.2]{LurieHA},
 one can define the stabilization $\iC_{st}$ of a pointed $\infty$-category $\iC$ with finite limits
 by considering the limit of the (left) tower of pointed $\infty$-categories
$$
 \hdots  \xrightarrow{\Omega} \iC \xrightarrow{\Omega} \iC
$$
where $\Omega$ is the loop space functor defined in \Cref{num:stable}.
 If $\iC$ is a symmetric monoidal $\infty$-category, one obtains
 a symmetric monoidal $\infty$-category structure on $\iC_{st}$ by taking
 the colimit in the $\infty$-category $\iCatM$.

Building on \Cref{ex:monoidal-icat}, $\iC$ admits a symmetric monoidal $\infty$-categorical structure
 whose tensor product is given by the smash product. One can view the simplicial sphere $S^1$
 as an object of $\iC$ by the formula $S^1=\Sigma *$. Then one obtains the identification
 $\Omega=\Omega_{S^1}$ and $\Sigma=\Sigma_{S^1}$ (by using adjunctions properties).
 Note that $S^1$ is a symmetric object.\footnote{This follows from the corresponding property
 of the simplicial set $S^1$ in the $\infty$-category of spaces. Or one can directly deduce this
 from the symmetric monoidal structure on $\iC$.}
 
One can show that the stabilization $\iC_{st}$ defined above coincides
 with the monoidal $\infty$-category $\iC[(S^1)^{-1}]$ constructed previously.
 In other words, $\iC_{st}$ can be identified with the colimit of the (right) tower
 of presentable monoidal $\infty$-categories
$$
\iC \xrightarrow{\Sigma} \iC \xrightarrow{\Sigma} \hdots
$$
computed in the $\infty$-category of presentable monoidal $\infty$-categories.
 This follows from \cite[Rem. C.1.1.6]{LurieSAG}. Note that if one computes
 the previous colimit in the $\infty$-category of $\infty$-categories, 
 then one obtains a smaller $\infty$-category called the Spanier-Whitehead category.
 It is not stable but only \emph{pre-stable} (see \emph{loc. cit.}).
\end{ex}

\begin{num}\textbf{Monoidal structures and localizations.}\label{df:ifty-monoidal-loc}
Let $\iC$ be as in \Cref{num:explicit-prstmon-icat}.
 We consider a set of morphisms $W$ of $\iC$ and let us consider the canonical left functor $\pi:\iC \rightarrow \iC[W^{-1}]$.

Then the following conditions are equivalent (see \cite[Prop. 2.2.1.9]{LurieHA}):
\begin{enumerate}
\item There exists a symmetric monoidal $\infty$-category structure on $\iC[W^{-1}]$ such that the functor $\pi$
 extends to a symmetric monoidal left functor.
\item The $W$-local equivalences in $\iC$ are stable under tensor product:
 for any $W$-local equivalence $f:M \rightarrow N$, and any object $P$ of $\iC$, $f \otimes P$ is a $W$-local equivalence.
\end{enumerate}
In that case, we say that the $W$-localization is monoidal. By abuse of notation, we say that the functor $\pi$ is symmetric monoidal.
\end{num}

\begin{ex}
Let $\iSh_t(\mathrm S)$ be the $\infty$-topos of $t$-sheaves on a Grothendieck site as in \Cref{num:infty-topos}.
 One deduces from the description of \v Cech covers that $\iSh_t(\mathrm S)$ admits a symmetric monoidal structure
 such that $\infty$-functor $a:\iPSh(\mathrm S) \rightarrow \iSh(\mathrm S)$ is symmetric monoidal.
 Moreover, when the site $\mathrm S$ admits products, the corresponding tensor product on $\iSh(\mathrm S)$ is the cartesian product.
\end{ex}

\begin{num}\textbf{Monoidal model categories}.\label{num:model-tensor}
It is established in \cite[Prop. 2.3]{NS} that the $\infty$-category $\iM[W^{-1}]$
 associated to any simplicial, combinatorial, tractable, left proper and symmetric monoidal
 model category $\iM$ with weak equivalences $W$ admits a canonical symmetric monoidal $\infty$-categorical structure.

Moreover, it is proved in \cite[Prop. 2.4, Th. 2.8]{NS} that every presentable symmetric monoidal
 $\infty$-category $\iC$ arise in that way: there exists an equivalence $\iC \simeq \iM[W^{-1}]$.
 
In other words, one can freely use the construction of model categories to benefit
 from the language of $\infty$-category theory.
\end{num}

\subsection{Pro-objects in $\infty$-categories}\label{sec:pro-infty}

\begin{num}
Our reference for pro-objects is
 \cite[\textsection 5.2]{BHH}.\footnote{Which also proposes a detailed treatment 
 of set-theoretic issues inherent to the notions of limits and colimits.}

Given a presentable $\infty$-category $\iC$, there exists an $\infty$-category
 $\pro\iC$ which admits limits (see \Cref{num:(co)lim-ifty}),
 an $\infty$-functor $\iota:\iC \rightarrow \pro\iC$ which satisfy
 the following universal property: for any $\infty$-category $\iD$
 which admits all limits, the $\infty$-functor
$$
\iota^*:\Fun(\pro\iC,\iD) \rightarrow \Fun(\iC,\iD)
$$
is fully faithful and induces an equivalence of $\infty$-categories
 with the full sub-$\infty$-category of the left-hand side spanned
 by those functors which commute with limits
 (see again \Cref{num:(co)lim-ifty}).

We refer the reader to \cite[Th. 3.2.19]{BHH}.
 A simple construction of the $\infty$-category $\pro\iC$ is
 to consider the full sub-$\infty$-category of $\Fun(\iC,\iS)^{\op}$
 of functors which are accessible and commute
 with finite limits (see \cite[Prop. 3.2.18]{BHH}).
\end{num}

\begin{rem}\label{rem:ind-pro}
In fact, as we are implicitly working within universes,
 one can also rely on the formula:
$$
\pro\iC=\big(\ind(\iC^{\op})\big)^\op
$$
and use the treatment of ind-objects in \cite[\textsection 5.3]{LurieHTT}.
\end{rem}

\begin{ex}\label{ex:pro-ifty&ordinary}
Let $\catC$ be an (ordinary) category.
 Then one obtains the following identification:
$$
\pro(\Nrv \catC)=\Nrv\big(\pro\catC)
$$
using, on the right-hand side, the classical definition of pro-objects.
 Indeed, this follows from the universal properties of pro-objects
 in both the $\infty$-categorical framework (as above)
 and the ordinary categorical one (see \cite{SGA4}, \textsection 8.10, 8.13).
\end{ex}

\begin{num}\label{num:notation-plimit}
By the very construction, the $\infty$-category $\pro\iC$ associated
 with a presentable $\infty$-category admits cofiltered projective limits.
 In particular, given an $\infty$-functor $X:\sI \rightarrow \iC, i \mapsto X_i$,
 one can consider the associated projective limit in $\pro\iC$ which we will denote
 by 
$$
\plim{i \in \sI} X_i
$$
 following the classical notation (see \cite[I, 8.5.3.2]{SGA4}).

According to \cite[Prop. 5.1.1, 5.1.2]{BHH}, given two
 $\infty$-functors $X:\sI \rightarrow \iC$ and $Y:\sJ \rightarrow \iC$,
 there exists a canonical equivalence:
\begin{equation}\label{eq:Map-pro}
\Map(\plim{i \in \sI} X_i,\plim{j \in \sJ} Y_j)
 \simeq \underset{j \in \sJ}\lim {i \in \sI}\colim \Map(X_i,Y_j).
\end{equation}
One should be careful that, in general, $$\Ho(\pro\iC) \neq \pro\Ho \iC.$$
 But see \Cref{rem:compare-old&new_DMgen} for a case of interest.
\end{num} 
\section{Categorical presentation of cycle modules}\label{sec:cmod}

One can describe Rost's cycle premodules
 as covariant functors from a category whose morphisms are defined by generators and relations.
 We reproduce below the definition from \cite[\textsection 4.1]{Deg5} for the reader's convenience.
\begin{df}\label{df:efld}
We let $\efld_k$ be the category
 whose objects are given by pairs $(E,n)$ where $E/k$ is a function field,
 and $n \in \ZZ$ an integer.
 The morphisms of $\efld_k$ are abelian groups defined by the following generators
 $(\text D*)$ and relations $(\text R*)$ as displayed below, where maps are always assumed to be
 $k$-algebra morphisms,
 and the valuations are always understood as (geometric) valuations of function fields:

\noindent \underline{Generators}:
\begin{itemize}
\item[\textbf{D1:}] $\varphi_*:(E,n) \rightarrow (L,n)$ for
 $\varphi:E \rightarrow L$, $n \in \ZZ$.
\item[\textbf{D2:}] $\varphi^!:(L,n) \rightarrow (E,n)$, for $\varphi:E
 \rightarrow L$ finite, $n \in \ZZ$.
\item[\textbf{D3:}] $\gamma_x:(E,n) \rightarrow (E,n+r)$, for
  $x \in K_r^M(E)$, $n \in \ZZ$.
\item[\textbf{D4:}] $\partial_v:(E,n) \rightarrow (\kappa(v),n-1)$,
 for $(E,v)$ valued function field over $k$, $n \in \ZZ$.
\end{itemize}

\noindent \underline{Relations}:
\begin{itemize}
\item[\textbf{R0:}] For all $x,y \in K_*^M(E)$,
$\gamma_x \circ \gamma_y=\gamma_{x.y}$.
\item[\textbf{R1a:}] $(\psi \circ \varphi)_*=\psi_* \circ \varphi_*$.
\item[\textbf{R1b:}] $(\psi \circ \varphi)^!=\varphi^! \circ \psi^!$.
\item[\textbf{R1c:}] Let $\varphi:K \rightarrow E$,
$\psi:K \rightarrow L$ be finite. For any $z \in \spec{E \otimes_K L}$,
let $\bar \varphi_z:L \rightarrow E \otimes_K L/z$ and
$\bar \psi_z:E \rightarrow E \otimes_K L/z$ be the induced morphisms:

$\psi_*\varphi^!=\sum_{z \in \spec{E \otimes_K L}}
\mathrm{lg}\big(E \otimes_K L_z\big).(\bar \varphi_z)^!(\bar \psi_z)_*$,

where $\mathrm{lg}(A)$ is the length of an Artin ring $A$.
\item[\textbf{R2a:}] For all $\varphi:E \rightarrow L$, $x \in K_*^M(E)$,
$\varphi_* \circ \gamma_x=\gamma_{\varphi_*(x)} \circ
\varphi_*$.
\item[\textbf{R2b:}] For $\varphi:E \rightarrow L$ finite and all $x \in K_*^M(E)$,
$\varphi^! \circ \gamma_{\varphi_*(x)}=
\gamma_x \circ \varphi^!$.
\item[\textbf{R2c:}] For $\varphi:E \rightarrow L$ fini and all $y \in K_*^M(L)$,
$\varphi^! \circ \gamma_y \circ \varphi_*=
\gamma_{\varphi^!(y)}$.
\item[\textbf{R3a:}] Let $\varphi:E \rightarrow L$ be a morphism,
 $v$ and $w$ be valuations $L/k$ and $E/k$ respectively,
 and $e>0$ an integer such that $v|_{E^\times}=e.w$. Let
 $\bar\varphi:\kappa(w) \rightarrow \kappa(v)$ be the induced morphism:

$\partial_v \circ \varphi_* =e.{\bar \varphi}_* \circ \partial_w$.
\item[\textbf{R3b:}] Let $\varphi:E \rightarrow L$ be finite, and $v$ a valuation on $E/k$.
 Given a valuation $w$ on $L$ which extends $v$, we let $\bar \varphi_w:\kappa(w) \rightarrow \kappa(v)$
 be the induced morphism on the respective residue fields:

$\partial_v \circ \varphi^!=\sum_{w/v} \bar \varphi_w^! \circ \partial_w$.
\item[\textbf{R3c:}] Let $\varphi:E \rightarrow L$, $v$ a valuation on $L/k$
 which is zero on $E^\times$: $\partial_v \circ \varphi_*=0$.
\item[\textbf{R3d:}] Let $(E,v)$ be a valued function field over $k$ and $\pi$ uniformizer of $v$:
 $\partial_v \circ \gamma_{\{-\pi\}} \circ \varphi_* ={\bar \varphi}_*$.
\item[\textbf{R3e:}] Let $(E,v)$ be a valued function field over $k$, $u \in E^\times$ such that $v(u)=0$:
$\partial_v \circ \gamma_{\{u\}}
=-\gamma_{\{\bar u\}} \circ \partial_v$.
\end{itemize}
\end{df}
According to \cite[Def. 1.1]{Rost}, a cycle premodule over $k$ is simply a (covariant) functor
 $M:\efld \rightarrow \ab$.

\begin{num}\textit{Higher order valuations}.--
The preceding category admits a more elegant description due to Rost.
 We introduce some notation to state it.

Let $F$ be field with an arbitrary valuation (not necessarily discrete)
$$
v:F^\times \rightarrow \Gamma
$$
where $\Gamma$ is a totally ordered abelian group. 
 We refer the reader to \cite{Vaquie} about arbitrary valuation rings.

As usual, we denote by:
\begin{align*}
\cO_v=\{x \in F \mid v(x)\geq 0\} \\
\cM_v=\{x \in F \mid v(x)>0\}
\end{align*}
the associated valuation ring, and the maximal ideal of the latter,
 where we extend $v$ as usual by setting $v(0)=\infty$,
 for $\Gamma_\infty=\Gamma \cup \{\infty\}$,
 obtained by adding a maximal element $\infty$.
 We also let $\kappa_v=\cO_v/\cM_v$ be the residue (class) field.

Recall that the rank of $v$ is the rank of the abelian group $\mathrm{Im}(v)$
 (\emph{loc. cit.} Definition after Th. 1.7),
 or equivalently the (Krull) dimension of the valuation ring $\cO_v$
 (\emph{loc. cit.} Corollary after Th. 1.7).
 If $v$ has finite rank $r$, then $\Gamma\simeq \ZZ^r$ and $v$ can be defined
 as a \emph{composition} of $r$ valuations $(v_1,\cdots,v_r)$, $v_{i+1}$ being a valuation
 on the residue field $\kappa_{v_i}$ (see \emph{loc. cit.} Remark 1.7).
\end{num}

\begin{num}\textit{Milnor K-theory of valuation rings}.--
We consider an arbitrary valuation $v:F \rightarrow \Gamma$.
 Using the above notation, we have an exact sequence of abelian groups:
\begin{equation}\label{eq:units_val}
1 \rightarrow (1+\cM_v) \rightarrow \cO_v^\times \rightarrow \kappa_v^\times \rightarrow 1\textbf{}
\end{equation}
Following \cite[Rem. 1.6]{Rost}, we define a $\ZZ$-graded ring associated to $v$ by the formula:
$$
\KM*(v)=\KM*(F)/(1+\cM_v).
$$
Considering the obvious projection map $p:\KM*(F) \rightarrow \KM*(v)$,
 there exists a morphism $i:\KM*(\kappa_v) \rightarrow \KM*(v)$ of graded rings uniquely
 defined by the universal property\footnote{This follows from the exact sequence
 \eqref{eq:units_val} by induction on the degree of symbols in Milnor K-theory}:
$$
\xymatrix@=14pt{
\KM*(\cO_v)\ar^p[r]\ar[d] & \KM*(v) \\
\KM*(\kappa_v)\ar@{-->}_i[ru] &  
}
$$
where the vertical map is induced by the obvious surjection.
The following proposition was stated in \cite[Rem. 1.10]{Rost}.
\end{num}
\begin{prop}\label{prop:efld_Rost_present}
Let $k$ be an arbitrary field,
 and $E$, $F$ be function fields over $k$.
 Given an integer $r \geq 0$, we consider the following set:
$$
\cI_r(F,E)=\left\{(v_*,L) \left|
\begin{array}{l}
v_1,\cdots,v_r \textit{ geom. valuations on $F/k$}, v=v_1 \circ \hdots v_r \\
L=\kappa_x, x \in \Spec(E \otimes_k \kappa_v), 
 E \xrightarrow{\varphi} L \xleftarrow{\psi} \kappa_v, \varphi \text{ finite}
\end{array}\right.
 \right\}
$$
Then the following map:
\begin{align*}
\bigoplus_{r\geq 0, (v_*,L) \in \cI_r(F,E)} \KM*(L) \otimes_{\KM*(\kappa_v)} \KM*(v) & \rightarrow \Hom_{\efld_k}\big((F,0),(L,*)\big) \\
\sigma \otimes \overline{\tau} & \mapsto \varphi^! \circ \gamma_\sigma \circ \psi_* \circ \partial_{v_1},
 \circ \hdots \circ \partial_{v_r} \circ \gamma_{\tau}
\end{align*}
where $\tau \in \KM*(\cO_v)$ is an arbitrary element such that $p(\tau)=\bar \tau$,
 is well-defined and induces an isomorphism of graded abelian groups.
\end{prop}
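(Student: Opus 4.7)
The plan is to establish three things in turn: well-definedness of the map, its surjectivity, and its injectivity. The last is the main obstacle.

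\emph{Well-definedness.} The only subtle point is independence of the choice of lift $\tau \in \KM{*}(\cO_v)$ of $\bar\tau \in \KM{*}(v)$. If $\tau - \tau'$ lies in the ideal of $\KM{*}(F)$ generated by units of the form $1+u$ with $u \in \cM_v$, I must show that $\partial_{v_1} \circ \cdots \circ \partial_{v_r} \circ \gamma_{\tau-\tau'} = 0$. Using (R0) to isolate a factor $\{1+u\}$ in each symbol and then (R3e), the rank-one case reduces to $\partial_v \circ \gamma_{\{1+u\}} = -\gamma_{\{1\}} \circ \partial_v = 0$, since the symbol $\{1\}$ vanishes in Milnor K-theory. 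The higher-rank case follows by induction on $r$ using the factorisation of $v$ as a composition of rank-one valuations. Compatibility with the tensor product relation over $\KM{*}(\kappa_v)$ is a consequence of (R2a) applied to $\psi_*$, which converts a symbol $\alpha \in \KM{*}(\kappa_v)$ acting on $\KM{*}(L)$ via $\psi_*(\alpha)$ into the image of $\alpha$ in $\KM{*}(v)$ through $i$.

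\emph{Surjectivity.} I would give a normalisation algorithm: take an arbitrary word in the generators (D1*)-(D4*) and, using (R0)-(R3), bring it into the standard form $\varphi^! \circ \gamma_\sigma \circ \psi_* \circ \partial_{v_1} \circ \cdots \circ \partial_{v_r} \circ \gamma_\tau$. The key relations are (R3a), (R3b), (R3c), which transport residues across pushforwards and pullbacks and kill residues along valuations trivial on a subfield; (R2a)-(R2c), which exchange symbols with pushforwards and pullbacks; (R1c), the base change formula used to rewrite a composite $\psi_* \circ \varphi^!$ as a sum of terms of the form $\varphi^! \circ \psi_*$; and (R3a), (R3d), which combine consecutive residues into a single iterated residue along a composite geometric valuation $v = v_1 \circ \cdots \circ v_r$. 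The remaining flexibility in placing the intermediate symbols on either side of $\psi_*$ matches exactly the tensor product identification over $\KM{*}(\kappa_v)$.

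\emph{Injectivity.} This is the main obstacle. The strategy is to equip the left-hand side of the claimed isomorphism, viewed as a functor $M_F \colon \efld_k \to \ab$ in the $E$-variable, with a cycle premodule structure making it the free cycle premodule on the object $(F, 0)$. Concretely, I would define pushforwards, pullbacks, symbol multiplications, and residues on the summands of $M_F$ using Rost's specialization maps and Milnor K-theory of valuation rings. The distinguished element $1_F \in M_F(F, 0)$ is the one corresponding to the trivial datum $r = 0$, $L = F$, $\sigma = \tau = 1$. After verifying that the relations (R0)-(R3) hold on $M_F$, the universal property applied to the representable cycle premodule $P_F := \Hom_{\efld_k}((F, 0), -)$ (automatically a cycle premodule by Yoneda) and to the identity element $1_F \in P_F(F, 0)$ yields an inverse to the map of the statement. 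The bulk of the argument is the case-by-case verification of the cycle premodule axioms on $M_F$: composing a new residue with the residue tower already present in a summand typically enlarges $r$ and requires the full formalism of composition of valuations; applying a pushforward or pullback to a summand induces fibre decompositions of the form appearing in (R1c); and compatibility with the tensor product relation over $\KM{*}(\kappa_v)$ must be preserved throughout. This recipe is the substance of Rost's argument in \cite[Rem. 1.10]{Rost}.
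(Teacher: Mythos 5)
Your proposal is correct and takes essentially the same route as the paper, which itself gives only a terse sketch ("put morphisms in normalized form using (R*)", citing Rost's Lemma~1.9 and Remark~1.10). Your three-step plan — well-definedness via (R0) and (R3e) reducing to $\partial_v \circ \gamma_{\{1+u\}} = 0$, surjectivity via a normalization algorithm, and injectivity by exhibiting the left-hand side as the free cycle premodule on $(F,0)$ and invoking the universal property against the corepresented premodule $\Hom_{\efld_k}((F,0),-)$ — is precisely the content of Rost's argument that the paper delegates, and the tensor-relation compatibility you check via (R2a) and the iterated-residue/enlarging-$r$ bookkeeping is where the cited Lemma~1.9 of Rost enters.
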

Obviously, given that the functor $(E,0) \mapsto (E,1)$ is an auto-equivalence of the category $\efld_k$,
 this completely describes the morphism in the latter category.
 The proof follows by applying the relations $(\text R*)$ to put morphisms
 in $\efld_k$ in the above normalized form. A key point is Lemma 1.9 of \cite{Rost}.
 
\bibliographystyle{amsalpha}
\bibliography{genmot}

\end{document}